\documentclass[12pt,a4paper]{amsart}
 \pdfoutput=1
 
 \usepackage{amsmath,amssymb,amsthm}
 \usepackage{bm}  
 \usepackage{mathtools}
\usepackage{tikz}
\usetikzlibrary{arrows}
 \usepackage{thmtools}
 \usepackage{float}
 
 
 \usepackage{xcolor} 	
 \usepackage{hyperref}
 \hypersetup{
 	colorlinks,
     linkcolor={red!60!black},
     citecolor={green!60!black},
     urlcolor={blue!60!black},
 }
 \usepackage[maxbibnames=99]{biblatex}
 \addbibresource{/Users/bav9895/Dropbox/tex/bibfile.bib}

 \usepackage[utf8]{inputenc}
  \usepackage{csquotes}
 \usepackage[T1]{fontenc}
 \usepackage{lmodern}
 \usepackage[babel]{microtype}
 \usepackage[english]{babel}
 
 \linespread{1.19}
 \usepackage{geometry}
 \geometry{left=25mm,right=25mm, top=22.5mm, bottom=22.5mm}

 \usepackage{enumitem}

\theoremstyle{plain}
\newtheorem{theorem}{Theorem}[section]
\newtheorem{fact}[theorem]{Fact}

\newtheorem{claim}[theorem]{Claim}
\newtheorem{corollary}[theorem]{Corollary}
\newtheorem{lemma}[theorem]{Lemma}

\newtheorem{observation}[theorem]{Observation}

\newtheorem{conjecture}[theorem]{Conjecture}

\newtheorem*{theorem*}{Theorem}
\newtheorem*{corollary*}{Corollary}

\theoremstyle{definition}
\newtheorem{remark}[theorem]{Remark}

\title{The Lovász-Cherkassky theorem in infinite graphs}

\author{Attila Jo\'{o}}
\thanks{Funded by the Deutsche Forschungsgemeinschaft (DFG, German
Research Foundation)-513023562 and partially by NKFIH OTKA-129211}
\address{Attila Jo\'{o},
Department of Mathematics, University of Hamburg, Bundesstra{\ss}e 55 (Geomatikum), 20146 Hamburg, Germany}
\email{attila.joo@uni-hamburg.de}

\keywords{Lovász-Cherkassky theorem, T-path,  packing paths}
\subjclass[2020]{Primary: 05C63 Secondary: 05C38, 05C40, 05C45} 

\begin{document}

\begin{abstract}
Infinite generalizations of theorems in finite combinatorics were initiated by Erdős due to his famous Erdős-Menger 
conjecture (now 
known as the Aharoni-Berger theorem) that extends Menger's theorem to infinite graphs in a structural way. We prove a 
generalization of this manner of the classical result about packing edge-disjoint $ T $-paths in an ``inner Eulerian'' setting 
obtained by Lovász and Cherkassky independently in the '70s.  
\end{abstract}
\maketitle

\section{Introduction}

\subsection{Motivation}
Paul Erdős was one of the most influential mathematicians in modern combinatorics. One of his excellent qualities was the 
ability to work equally well with finite and infinite combinatorial structures. He conjectured as a student that Menger's 
theorem remains true in infinite graphs in the following structural sense: For every directed graph $ D$  and $ A, B\subseteq 
V(D) $, there is a family $ \mathcal{P} $ of disjoint directed $ 
AB $-paths and an $ AB $-separator $ S $ consisting of  precisely
one vertex from each path in $ \mathcal{P} $. This conjecture gave a great amount of work for 
several mathematicians (Podewski, Steffens, Aharoni, Berger, Nash-Williams, Shelah, etc.) for decades after it was 
eventually proved in its whole generality \cite{aharoni2009menger}. It also inspired Aharoni and Nash-Williams, who 
themselves also have a significant contribution both to finite and infinite combinatorics, to formulate conjectures in 
infinite combinatorics 
extending their finite counterparts.

For example, Aharoni conjectured that if all the finite induced subgraphs of a graph are perfect and it does not contain an 
infinite independent set of vertices, then it  
admits a vertex-colouring 
where there is no monochromatic edge and there is a clique on which all colours appear. This was verified for 
chordal graphs in \cite{aharoni1995strongly}. Nash-Williams' conjecture about the intersection of infinite matroids (see 
\cite{aharoni1998intersection}) 
generalizes 
structurally the Matroid 
intersection theorem of Edmonds and is an active research direction \cite{aigner2018intersection, 
bowler2021intersection, joo2021MIC}. 
His conjecture about the 
existence of a $ k $-arc-connected orientation of $ 2k $-edge-connected graphs, that he proved for finite graphs, is also 
intensively investigated \cite{thomassen2016orientations, assem2023nash}. 

These examples indicate that (structural) infinite generalizations of classical theorems in finite combinatorics are rich sources  
of deep, new mathematical ideas. The starting point of our investigation is a theorem obtained by Lovász and Cherkassky 
independently in the '70s. We need to introduce a few notions to state it.

\subsection{The main result}
 A grapht $ G $ is a graph together with a fixed subset $ T $ of vertices. A grapht is inner 
Eulerian 
if there is no vertex set $ X $ disjoint from $ T $ that defines an odd cut\footnote{In a finite grapht assuming this only for 
singletons implies that 
this holds for every $ X $.}.  A $ T $-path is a 
path between two distinct elements of $ T $ without inner points in $ T $. For disjoint  $ X,Y\subseteq V(G) $, let 
$ \lambda_G(X, Y) $ denote the 
maximal number of pairwise edge-disjoint paths between $ X$ and $ Y $ in $ G $.

\begin{theorem}[Lovász-Cherkassky theorem, \cite{lovasz1976some, cherkasskiy1977}]\label{t: LCh}
In every finite, inner Eulerian grapht $ G $ the 
maximal number of pairwise edge-disjoint $ T $-paths is equal to
\[ \frac{1}{2}\sum_{t\in T}\lambda_G(t, T-t).  \]

\end{theorem}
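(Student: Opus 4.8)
The plan is to prove the inequality ``$\le$'' by a short counting argument and ``$\ge$'' by adjoining to $G$ a suitable Eulerian gadget and reading off the $T$-paths from a closed-trail decomposition of the enlarged grapht.

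For the upper bound, let $\mathcal{P}$ be any packing of pairwise edge-disjoint $T$-paths and fix $t\in T$. Every $P\in\mathcal{P}$ having $t$ as an endpoint meets $T$ only in its two endpoints, so the members of $\mathcal{P}$ incident with $t$ form a family of pairwise edge-disjoint $t$--$(T-t)$ paths; hence at most $\lambda_G(t,T-t)$ of them are incident with $t$. Summing over $t\in T$ counts each $P\in\mathcal{P}$ exactly twice, whence $2\lvert\mathcal{P}\rvert\le\sum_{t\in T}\lambda_G(t,T-t)$.

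For the lower bound I would first record that a minimum edge set separating $t$ from $T-t$ consists of the edges leaving some set $\{t\}\cup X$ with $X\cap T=\varnothing$; since inner-Eulerianness forces the number of edges leaving $X$ to be even, this gives
\[ \lambda_G(t,T-t)\le\deg_G(t)\qquad\text{and}\qquad\lambda_G(t,T-t)\equiv\deg_G(t)\pmod 2 . \]
Summing the congruence over $T$ (and using that every non-terminal has even degree) shows that $\sum_{t\in T}\lambda_G(t,T-t)$ is even; combined with $\lambda_G(t,T-t)\le\tfrac12\sum_{s\in T}\lambda_G(s,T-s)$ — which follows by decomposing a maximum $t$--$(T-t)$ flow into paths and using $\lambda_G(t,s)\le\lambda_G(s,T-s)$ — this yields a loopless multigraph $D$ on the vertex set $T$ with $\deg_D(t)=\lambda_G(t,T-t)$ for every $t$. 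Set $N:=\lvert E(D)\rvert=\tfrac12\sum_{t\in T}\lambda_G(t,T-t)$ and let $G^{+}$ be obtained from $G$ by adding an edge $tt'$ for each edge $tt'$ of $D$, with terminal set still $T$. Every vertex of $G^{+}$ has even degree (non-terminals are unchanged, and a terminal $t$ now has degree $\deg_G(t)+\lambda_G(t,T-t)$, even by the congruence), so $E(G^{+})$ decomposes into pairwise edge-disjoint closed trails. Deleting the $N$ edges inherited from $D$ breaks these closed trails into trails of $G$ joining terminals; the goal is to arrange the decomposition so that extracting $T$-paths from those trails (cutting at interior terminals and discarding closed leftovers) leaves at least $N$ pairwise edge-disjoint $T$-paths, which with the upper bound finishes the proof.

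The step I expect to be the genuine obstacle is this last one: a careless closed-trail decomposition may ``waste'' a $D$-edge when the trail of $G$ immediately following it returns to the same terminal without meeting any other terminal, and so contributes no $T$-path. The inequality $\lambda_G(t,T-t)\le\deg_G(t)$ already allows one to insist, at every terminal, on pairing each $D$-edge with a genuine $G$-edge, so that every $D$-edge is followed by a non-trivial $G$-trail; excluding the remaining bad configuration — a non-trivial closed $G$-trail strung between two $D$-edges at one terminal — is precisely where the \emph{full} inner-Eulerian hypothesis (no odd cut disjoint from $T$, not merely even degrees) has to enter. I would resolve it by an extremal argument: among all admissible configurations pick one yielding the maximum number of $T$-paths, and if that number is below $N$, reroute along a minimum $t$--$(T-t)$ cut to gain one more. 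A cleaner alternative, also specific to the finite case, is to dispense with the gadget and instead apply Lovász's splitting-off theorem from the same paper \cite{lovasz1976some} to eliminate the non-terminal vertices one at a time: each splitting preserves inner-Eulerianness and, once augmented by an admissibility argument upgrading ``all pairwise local edge-connectivities are preserved'' to ``all $\lambda_G(t,T-t)$ are preserved'', reduces the statement to the case $V(G)=T$, where every edge is a $T$-path and the identity is obvious.
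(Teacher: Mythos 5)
The paper itself does not prove Theorem~\ref{t: LCh}: it is stated as a known result, the short counting argument for the ``$\le$'' direction is given in the paragraph following the statement, and the ``$\ge$'' direction is obtained only as the quantitative shadow of the much stronger structural Theorem~\ref{thm: LCh inf}, whose proof occupies Sections~\ref{sec: redu linkage}--\ref{sec: countable case}. Your upper-bound paragraph reproduces exactly the paper's counting argument and is fine. The parity observation you invoke (all cuts separating $t$ from $T-t$ whose $t$-side avoids $T-t$ have the parity of $d(t)$, hence $\lambda_G(t,T-t)\equiv\deg_G(t)\pmod 2$) is also the one the paper attributes to Lovász in Section~1.3, and your deduction that the degree sequence $\bigl(\lambda_G(t,T-t)\bigr)_{t\in T}$ is realizable by a loopless multigraph $D$ on $T$ (sum even, each term $\le$ half the sum) is correct.

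The lower bound, however, has a genuine gap, and you identify it yourself: after adding the $N$ gadget edges and decomposing $G^{+}$ into closed trails, nothing forces the $G$-segment between two consecutive $D$-edges to visit a second terminal, so $D$-edges can be ``wasted'' on closed $G$-detours at a single terminal, and you do not show how to recover $N$ pairwise edge-disjoint $T$-paths. The one-line fix you gesture at --- ``reroute along a minimum $t$--$(T-t)$ cut to gain one more'' --- is not an argument; in particular it is unclear what is being optimized over, and a local reroute at $t$ can destroy a $T$-path at another terminal. Your ``cleaner alternative'' (split off pairs of edges at non-terminals until $V=T$) is in substance Lovász's original proof, which the paper discusses but does not reproduce. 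There, the entire content is precisely the step you defer: showing that at a non-terminal $v$ of positive (even) degree some pair of incident edges can be split off without decreasing any $\lambda_G(t,T-t)$ and without creating an odd inner cut. This is where the parity lemma and the ``safe deletion of two edges'' argument (the finite ancestor of Lemma~\ref{lem: deletion of two edges}) do the work, and it is not a routine corollary of Lovász's or Mader's pairwise splitting-off theorems, since preserving $\lambda_G(t,T-t)$ for a whole terminal set is a stronger requirement than preserving pairwise local edge-connectivities. As written, both of your routes for ``$\ge$'' stop exactly at the step that constitutes the theorem.
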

If $ \mathcal{P} $ is a system of edge-disjoint  $ T 
$-paths, then $ \left|\mathcal{P}\right| $ can be calculated by counting for each $ t \in T $  how many $ P\in 
\mathcal{P} $ has $ t $ as an 
end-vertex and dividing the sum of these quantities by two. Since $ \lambda_G(t,T-t) $ is clearly an overestimation of the 
number of paths between 
$ t $ and $ T-t $ in $ \mathcal{P} $, the expression in Theorem \ref{t: LCh} is indeed an upper bound for the maximal size of 
a 
family of edge-disjoint $ T $-paths (regardless if $ G $ is inner Eulerian). Having $ \lambda_G(t,T-t) $ paths in $ 
\mathcal{P} 
$ ending at
$ t $   can be rephrased via 
Menger's theorem by the existence of a cut separating $ t $ and $ T-t $ that 
consists of precisely 
one edge from each $ P\in \mathcal{P} $ ending at $ t $. The equality in Theorem \ref{t: LCh} means that such 
a cut  exists for 
each $ t $. This indicates a structural formulation of 
Theorem \ref{t: LCh}. The main result of this paper is that this structural formulation\footnote{The literal, quantitative 
infinite 
generalization also holds but, as in the case of Menger's theorem, it is much weaker than the structural one.} is true 
regardless of the size of $ G $:
\begin{theorem}[Infinite Lovász-Cherkassky theorem]\label{thm: LCh inf}
In every inner Eulerian graft  $G $ there exists a system 
$\mathcal{P}$ of edge-disjoint $T$-paths such that for every $t\in T$ there is a cut separating $ t $ and $ T-t $ that consists 
of precisely 
one edge from each path in $  \mathcal{P} $ that has $ t $ as an end-vertex.
\end{theorem}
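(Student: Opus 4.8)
The plan is to recast the theorem as the existence of \emph{simultaneous} Menger certificates. For a system $\mathcal{P}$ of edge-disjoint $T$-paths write $\mathcal{P}_t\subseteq\mathcal{P}$ for the paths having $t$ as an end-vertex; then $\mathcal{P}$ witnesses the theorem precisely when for each $t\in T$ there is a set $S_t\subseteq V(G)$ with $S_t\cap T=\{t\}$ such that every edge of $\partial S_t$ lies on a path of $\mathcal{P}_t$ and each path of $\mathcal{P}_t$ meets $\partial S_t$ in exactly one edge. Trivially, a cut separating $t$ from $T-t$ has at least $\lambda_G(t,T-t)$ edges, so this already forces $|\mathcal{P}_t|=|\partial S_t|=\lambda_G(t,T-t)$; hence $\mathcal{P}$ is automatically a maximum $T$-path packing and the quantitative statement follows, and the substance is the existence of the $S_t$. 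For a \emph{single} $t$ this is exactly the Aharoni-Berger theorem \cite{aharoni2009menger} in its edge-disjoint-paths form applied with source $t$ and sink-set $T-t$ (equivalently, after contracting $T-t$ to a point, between two vertices --- inner Eulerianness is not even needed here): it returns a family $\mathcal{Q}_t$ of edge-disjoint $T$-paths all ending at $t$ together with a cut whose $t$-side $S_t$ is orthogonal to $\mathcal{Q}_t$ in the above sense. So the whole difficulty is simultaneity: the families $\mathcal{Q}_t$ for different terminals overlap and cannot be merged, and one global application to an auxiliary grapht with a fresh source and sink is useless, because a single $A$-$B$ cut does not disassemble into the required per-terminal cuts.

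I would aim for a common refinement by building the sets $S_t$ as a \emph{laminar} family together with a single $\mathcal{P}$ compatible with all of them. Laminarity is plausible: if two legal sides $S_t,S_{t'}$ cross, then $S_t\setminus S_{t'}$ is again a legal side for $t$ and $S_{t'}\setminus S_t$ one for $t'$, with no increase in the corresponding cut-sizes, so one can uncross. Concretely I would well-order $T=(t_\alpha)_{\alpha<\kappa}$ and recurse: at step $\alpha$, having committed to the laminar pieces and path-segments produced so far, apply the edge-variant of Aharoni-Berger inside the region that currently ``belongs'' to $t_\alpha$ --- which, crucially, is still an inner Eulerian grapht after the contractions that encode the earlier commitments and the remaining terminals --- in order to carve out $S_{t_\alpha}$ together with the paths crossing $\partial S_{t_\alpha}$, and then continue. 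Inner Eulerianness does double duty here: it keeps the reduced graphts inside the class, and it underlies a conservation principle guaranteeing that pinning down an orthogonal cut at $t_\alpha$ never overspends the $\lambda_G(t_\gamma,T-t_\gamma)$ available at any other $t_\gamma$, so that every terminal stays saturable --- this is the local, step-by-step avatar of the identity $\tfrac{1}{2}\sum_{t}\lambda_G(t,T-t)$.

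The hard part, and the genuinely infinitary one, will be the limit stages. Forming a union of the partial structures built at stages below a limit ordinal can turn $T$-paths into infinite rays and can spoil the orthogonality of an earlier $S_{t_\beta}$ against its now-enlarged family of paths. Pushing this through will require isolating an invariant of ``wave''/``linkability'' flavour --- on the partial path system together with the partial family of cuts --- that survives the relevant limits, plus presumably an alternating-path clean-up at successor steps when a newly chosen $S_{t_\alpha}$ collides with earlier commitments. This is exactly the phenomenon that already makes the Aharoni-Berger theorem far deeper than a compactness argument, now compounded by having to coordinate all the $S_t$ at once; for the same reason the tempting shortcut of exhausting $G=\bigcup_n G_n$ by finite subgraphts, completing each to an inner Eulerian grapht, invoking the finite Theorem~\ref{t: LCh}, and passing to a limit cannot work unassisted --- when $\lambda_G(t,T-t)=\infty$ the finite cuts need not converge to a valid infinite one --- so the Aharoni-Berger theorem has to enter as a black box in any case.
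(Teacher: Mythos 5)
You have correctly reformulated the problem (per-terminal Menger certificates, single-$t$ via Aharoni-Berger, the crux being simultaneity), and you are right that the cuts can be taken nested --- the paper's argument also produces a laminar family. But the proposal has two unresolved gaps, and together they are essentially the entire proof. The first is your ``conservation principle'': that committing to an orthogonal cut and path family at $t_\alpha$ leaves every other terminal still saturable. You ascribe this to inner Eulerianness, but that is a heuristic imported from the finite proof, where it is a parity count; parity arguments collapse when the relevant cardinals are infinite, as the paper's introduction points out. Concretely, after your step $\alpha$ the boundary $\delta(t_\gamma)$ need no longer be an Erdős-Menger cut in the residual grapht, and one has to engineer by hand (as the paper does through Lemma~\ref{lem: joker} and the preservation results --- Observations~\ref{obs: prev inner Euler}--\ref{obs: out-prev linkability partially}, Lemma~\ref{lem: no stat error}, Corollary~\ref{cor: deletion of two edges}, Lemma~\ref{lem: 1path make}) a uniqueness-of-Erdős-Menger-cut invariant that survives the surgery. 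None of this comes for free from inner Eulerianness.

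The second gap is the limit stages, which you explicitly concede and defer to an unspecified ``wave/linkability-flavoured invariant''. The paper avoids this difficulty altogether by \emph{not} recursing over $T$: it reduces Theorem~\ref{thm: LCh inf} to the Linkage theorem~\ref{thm: linkage} and proves that one by transfinite induction on $|E|$, cutting $G$ into elementary-submodel slices, applying a Fodor-type argument (Lemma~\ref{lem: no stat error}) to get club-many slices that inherit linkability, using Shelah-style singular compactness when $|E|$ is singular, and handling the countable base case by a finitary descent built on Lovász's splitting-off (Lemmas~\ref{lem: deletion of two edges} and~\ref{lem: 1path make}). Without a concrete limit invariant, your $T$-indexed recursion has no reason to survive limit ordinals: at such a stage the accumulated path fragments can become rays, and the orthogonality of earlier cuts can break exactly as you worry. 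So the gap you name is a real obstruction, not a routine verification. Your closing diagnosis --- that a naive finite-exhaustion compactness argument fails and that Aharoni-Berger has to enter as a black box --- is correct and consonant with the paper, but it locates the cliff without getting across it.
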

\subsection{Difficulties compared to the finite version}
In order to prove Theorem \ref{thm: LCh inf}, we are facing some difficulties that are not arising if only finite 
graphts are considered. Some of the proofs of the finite version start with a hypothetical counterexample for which $ 
\left|V\right|+\left|E\right| $ is 
minimal, reduces this quantity by one in a way, uses the fact that the result is no longer a counterexample and 
shows 
based on that, the original was also not a counterexample. Since reducing an infinite cardinal by one is not meaningful, 
such 
proofs cannot be adapted directly. 

There exists an augmenting path based algorithm for the `packing edge-disjoint $ T 
$-paths' 
problem \cite{iwata2020blossom} that also works in the infinite (not necessarily inner Eulerian) setting.  Even so, in general 
no well-defined 
limit object can be guaranteed after infinitely many iterations of augmenting paths.

A key observation in Lovász' proof is that for each $ t\in T $ all the numbers in   \[ \{ d(X):\ t\in X \subseteq V\setminus 
(T-t) \} \]  have the same parity (namely the parity of $ d(t) $). Indeed,  the sum $ \sum_{x\in X}d(x) $ is equal to $ d(X) $ 
plus twice the number of edges 
spanned by $ X $ and all 
summands except maybe $ d(t)$ are even. This allows Lovász to delete two edges ``safely'' in a certain setting applying 
Menger's theorem and the fact that if $ m<n $ and they have the same parity then $ m+2\leq n $.  Parity-based arguments 
do not work in the infinite case because all the involved 
cardinals might be infinite as well as $ X $ itself.

Despite of 
these hindrances some of the ideas in Lovász' proof can be adapted. The `safe deletion of two edges' by Lovász plays an 
important role in our proof as well (Lemma \ref{lem: deletion of two edges}) but the parity-based argument needs to be 
replaced 
by a more complex structural one. 
The `splitting off' technique of Lovász also appears in our proof. It does not 
reduce $ \left|E\right| $ in general but it reduces a finite quantity associated with a hypothetical counterexample for Lemma 
\ref{lem: 1path make}. To handle uncountable graphs,  the key is (beyond elementary submodels and singular 
compactness) Lemma \ref{lem: no stat error} which 
was inspired by the works of Aharoni, Nash-Williams and Shelah on the existence of transversals \cite{aharoni1983general, 
aharoni1984another}. Roughly 
speaking, the abstraction of their idea says that if we have an uncountable 
structure with a certain property and consider 
an increasing, continuous sequence of substructures whose length is an uncountable regular cardinal, then (in the right 
setting) 
there is a closed, unbounded subset of terms each of whose removal preserves the condition in question. The exact 
implementation of this abstract idea varies from problem to problem (see \cite{aharoni1984konig, aharoni1988matchings, 
aharoni2009menger}).  
In 
our case we can maintain in 
Lemma \ref{lem: no stat error} only a weakening of the property that we initially assumed. Even so, being aware of their 
technique was essential in guiding our proof efforts in the right direction.

\subsection{The structure of the paper}
In Section \ref{sec: notation} we introduce the notation. The former results we need for the proof are summarized in Section 
\ref{sec: premin}. The proof of the main result is divided into three sections. 

In the first one 
(Section \ref{sec: redu linkage}), we discuss a simple method to ``lift up'' systems of edge-disjoint $ T $-paths to the original 
grapht that are obtained after certain contractions.  In particular, we reduce 
the main result to 
a theorem (Linkage theorem \ref{thm: linkage}) in which we want to cover every edge incident with any $ t\in T $ by a 
system of edge-disjoint $ T $-paths under the condition that for any single $ t\in T $, the edges incident with $ t $ can be 
covered by such a system. The 
proofs of this 
section are relatively simple. They are based on the iterated application of 
the infinite Menger's theorem \cite{aharoni2009menger} and the infinite version of Pym's theorem  \cite{diestel2006cantor}. 

Section \ref{sec: reduc countable} is dedicated to the reduction of the Linkage theorem to its 
special case where the grapht is 
countable. Only this section requires some basic knowledge about combinatorial set theory (stationary sets and elementary 
submodels).
The already mentioned Lemma \ref{lem: no stat error} plays a key role by guaranteeing that we can maintain 
the premise of the Linkage theorem ``most of the time'' while cutting up an uncountable grapht into smaller pieces.

In Section \ref{sec: countable case}, we prove the countable case of the Linkage theorem. This is a 
somewhat simplified version of our previous work \cite{joo2023lovcher} in which we solved the countable case of the 
problem. We decided to include this for 
the sake of completeness but moved it to the end, thus readers already familiar with the proof of the countable case 
can skip it.

In the last section (Section \ref{sec: outlook}), we formulate a ``meta-conjecture'' together with a couple of specific 
conjectures 
concerning infinite generalizations of further classical results.
\section{Notation}\label{sec: notation}
\subsection{Graph theory}
In graphs, we allow parallel edges but not loops. Technically we represent a graph as a triple $ \boldsymbol{G=(V,E,I)} $ 
where 
the \emph{incidence function} $ \boldsymbol{I}:E\rightarrow [V]^{2} $ determines the end-vertices of the edges. An 
edge-partition of a graph $ 
G $ is a family of subgraphs whose edge sets partition $ E(G) $. 
For $ X\subseteq V $ let $ \boldsymbol{\delta_G(X)}:=\{ e\in E:  \left|I(e)\cap X\right|=1  \} $ and we write $ 
\boldsymbol{d_G(X)} $ for $ 
\left|\delta_G(X)\right| $.  If  a grapht $ G $ is obvious from the context, then we omit the subscript, furthermore, for a 
singleton $ 
\{ 
x \} $ we write simply $ \delta(x) $ and $ d(x) $.   All the paths in the 
paper are finite.  For  disjoint $ 
A,B\subseteq V $ an $ \boldsymbol{AB} $\textbf{-path}
is a path with one endpoint 
in $ A $ 
the other in $ B $ and no internal vertices in $ A\cup B $. Sometimes we need to combine paths to obtain new ones. We 
illustrate our notation by an example: Let $ P $ and $ Q $ be paths with $ s\in V(P) $, $ t\in V(Q) $ and $ e\in E(P)\cap E(Q) 
$. Then $ \boldsymbol{sPeQt} $ is the graph that consists of the smallest subpath of $ P $ containing $ s $ and $ e $ and the 
smallest subpath of $ Q $ containing $ t $ and $ e $. We use this only when the resulting graph is also a path. We call  $ 
C\subseteq E $ a \emph{cut} 
if $ C=\delta(X) $ for 
some $ X\subseteq V $. It is an $ \boldsymbol{AB} $\textbf{-cut} if 
$ G-C $ does not 
contain any $ AB $-path. We say that the edge set $ C $ and   the system $ \mathcal{P} $ of edge-disjoint paths are 
\emph{orthogonal} to each other if $ C 
$ consists of one edge from each $ P\in \mathcal{P} $. An \emph{Erdős-Menger} $ AB $\emph{-cut} is an $ AB $-cut $ C $ 
for which there 
exists a system $ \mathcal{P} $ of edge-disjoint $ AB $-paths such that $ C $ is orthogonal to $ \mathcal{P} $. The $ 
\boldsymbol{A} 
$\textbf{-side} of an $ AB $-cut $ C $ is the set of vertices reachable from $ A $ without using edges from $ C $.
For a $ U\subseteq V $ and a family $ \mathcal{F}=\{ X_u: u\in U \} $ of pairwise disjoint 
subsets of $ V $ with $ X_u\cap U=\{ u \} $, we define the graph  $ \boldsymbol{G/\mathcal{F}} $  obtained 
from $ G $ by \emph{contracting} $ X_u$ to $u $ for $ u\in U $ and deleting the resulting loops. Formally,  
$V(G/\mathcal{F}):= 
(V\setminus \bigcup \mathcal{F} 
)\cup U,\ E(G/\mathcal{F}):= E\setminus \{ e\in E: (\exists u\in U) I(e)\subseteq X_u \} $ and  $ 
I(G/\mathcal{F})(e):=\{ i_\mathcal{F}(u), i_\mathcal{F}(v) \} $  where  $ I(e)=\{ u,v \} $ and

\[ i_\mathcal{F}(v) =\begin{cases} v &\mbox{if } v\notin \bigcup \mathcal{F} \\
u & \mbox{if } u\in X_u .  
\end{cases} \]

 A quadruple $ \boldsymbol{G=(V,E,I, T)} $ is a \emph{grapht} if $ (V,E,I) $ is a graph and $ T\subseteq V $ with $ 
 \left|T\right|\geq 2 $. Definitions 
 corresponding to 
 graphs are extended to graphts in the natural way.  We refer to the elements of 
 $ T $ as \emph{terminal points}.  A grapht is inner 
 Eulerian 
 if there is no $ X\subseteq V\setminus T $ where $ d(X) $ is an odd natural number. A $ T $\emph{-path} is a 
 path between two distinct terminal points without inner points in $ T $. For $ t\in T $, we write $ t $-cut as an abbreviation 
 of $ t(T-t) $-cut. A system $ \mathcal{P} $ of edge-disjoint $ T $-paths \emph{links} $ t $ if it covers $ \delta(t) $.  It is a 
 \emph{linkage} for $ t\in T $ if links $ t $ and minimal with respect to this property (i.e. each $ P\in \mathcal{P} $ ends in $ 
 t $). We say that $ t $ is linked if there is a $ \mathcal{P} $ that links it. Finally,  
 $ 
 \mathcal{P} $ is a \emph{perfect linkage} of $ G $ if it links every $ t\in T $. A grapht 
 satisfies the \emph{linkability condition} if every $ t\in T $ is linked in it. For a set $ M $ and grapht 
 $ G $, we 
 abuse the 
 notation and let    
  \begin{align*}
  \boldsymbol{G \cap M}&:=(V,E\cap M,I \!\!\upharpoonright\!\! (E\cap M) ,T)\\
      \boldsymbol{G \setminus M}&:= (V,E\setminus M,I\!\!\upharpoonright\!\! (E\setminus M), T).
  \end{align*}
  
\subsection{Set theory}
We use standard set-theoretic notation. The variables $ \alpha,\beta $ and $ \gamma $ are standing for ordinal 
numbers, while 
$ \kappa $ denotes
cardinals. The smallest 
infinite cardinal, i.e. the set of the natural numbers is denoted by $ \boldsymbol{\omega} $.   The cofinality of $\kappa $ is  $ 
\boldsymbol{\mathsf{cf}(\kappa)} $. Let $ \kappa $ be 
 an infinite cardinal. A sequence $ \left\langle M_\alpha:\ \alpha<\kappa  \right\rangle $ of sets is increasing ($ \in 
 $-increasing) if $ M_\beta 
 \subseteq 
 M_\alpha $ ($ M_\beta \in
  M_\alpha $) for every $ \beta<\alpha<\kappa $. An increasing sequence is continuous if $ 
  M_\alpha=\bigcup_{\beta<\alpha}M_\beta $ for each limit ordinal $ 
  \alpha<\kappa $. Suppose that $ \kappa=\mathsf{cf}(\kappa)>\omega $. A set $ C\subseteq \kappa $ is a 
  \emph{club} of $ \kappa $ if it 
  is unbounded in $ 
 \kappa $ and closed with respect to 
 the order 
 topology (i.e. $ \sup B:=\bigcup B \in C $ for every $ B\subseteq C $ bounded in $ \kappa $). A set $ S\subseteq \kappa $ is 
 \emph{stationary} in $ \kappa $ if it meets every club of $ \kappa $.

 \subsection{Elementary submodels}
 Let $ \varphi $ be a formula in the first-order language of set theory with free variables $ 
 v_1,\dots, v_n $. For a set $ M $, the formula $ 
 \varphi^{M} $ is obtained from $ \varphi $ by  the relativization of the quantifiers to $ M $ (i.e. $ \forall v(\dots) $ is 
 replaced by 
 $ {\forall v( v\in M \Longrightarrow(\dots))}$ and 
 $\exists v(\dots)   $ by $ \exists v( v\in M \wedge (\dots)) $).  The set $ M $ is a
  $ \varphi 
 $-elementary submodel of the universe\footnote{It is also common to approximate the universe by a ``big'' but set-sized 
 structure first and take elementary submodels of that. From the perspective of applications, it does not make a difference.} if 
 for 
 every $ x_1,\dots, x_n\in M $ we have $  
 \varphi(x_1,\dots, 
 x_n)\Longleftrightarrow 
 \varphi^{M}(x_1,\dots, x_n) $. 
 Let $ \Sigma $ be a finite set of formulas. Then we say that $ M $ is a $ \Sigma $-elementary submodel of the universe if $ 
 M $ is $ \varphi 
 $-elementary for each $ 
 \varphi \in \Sigma $. 
 
 Let a finite set $ \Sigma $ of formulas, that contains everything relevant for our proofs, be fixed 
 through the paper. To improve the flow of words, we say simply ``elementary submodel'' instead of ``$ \Sigma $-elementary 
 submodel of the universe''. For more details concerning elementary submodels, we refer to 
 \cite{soukup2011elementary}.
\section{Preliminaries}\label{sec: premin}
\begin{lemma}[{Fodor's lemma, \cite[Theorem 8.7]{jech2002set}}]\label{lem: Fodor}
 If $ \kappa=\mathsf{cf}(\kappa)>\omega $, $ S\subseteq \kappa $ is stationary and $ f: S\rightarrow \kappa $ is a regressive 
 function 
 (i.e. $ f(\alpha)<\alpha $ for every $ \alpha \in S\setminus \{ 0 \} $), then there is a stationary $ S'\subseteq S $ and a $ 
 \gamma<\kappa $ such that $ f(\alpha)=\gamma $ for each $ \alpha\in S' $.
 \end{lemma}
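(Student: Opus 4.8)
The plan is to prove Fodor's lemma by contradiction, using the standard diagonal intersection argument; since it is a classical result, I would include only a sketch and refer to \cite{jech2002set} for the full details. Suppose the conclusion fails. Then for every $\gamma<\kappa$ the fibre $S_\gamma:=\{\alpha\in S:\ f(\alpha)=\gamma\}$ is non-stationary, so we may fix a club $C_\gamma\subseteq\kappa$ disjoint from $S_\gamma$.

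The engine of the argument is the diagonal intersection $D:=\{\alpha<\kappa:\ \alpha\in\bigcap_{\gamma<\alpha}C_\gamma\}$, and the key claim is that $D$ is a club of $\kappa$. First I would record the preliminary fact that for $\kappa$ regular uncountable the intersection of fewer than $\kappa$ clubs is again a club: closedness is immediate, and unboundedness follows by a ``catch your tail'' construction, repeatedly picking a point of each club above the previously chosen ones and taking a supremum, which stays below $\kappa$ because $\mathsf{cf}(\kappa)>\omega$. Closedness of $D$ is then immediate from closedness of the $C_\gamma$. For unboundedness of $D$, given $\beta_0<\kappa$ I would recursively build $\beta_0<\beta_1<\cdots$ with $\beta_{n+1}\in\bigcap_{\gamma<\beta_n}C_\gamma$ (possible since this set is a club by the preliminary fact, using $\left|\beta_n\right|<\kappa$), and then verify that $\beta:=\sup_n\beta_n$ lies in $D$ and exceeds $\beta_0$.

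Granting that $D$ is a club, the contradiction is short: $S$ is stationary, hence meets $D$, and since $D$ is unbounded we may pick $\alpha\in S\cap D$ with $\alpha>0$; set $\gamma:=f(\alpha)$, so $\gamma<\alpha$ by regressiveness. Then $\alpha\in D$ together with $\gamma<\alpha$ forces $\alpha\in C_\gamma$, while $\alpha\in S$ and $f(\alpha)=\gamma$ give $\alpha\in S_\gamma$, contradicting $C_\gamma\cap S_\gamma=\emptyset$. This completes the argument.

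The only step requiring genuine care is the proof that a diagonal intersection of a $\kappa$-sequence of clubs is again a club — concretely, that the supremum produced in the recursive construction stays strictly below $\kappa$ — and this is exactly the point where the regularity and uncountable cofinality of $\kappa$ are used; everything else is routine bookkeeping.
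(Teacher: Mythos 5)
The paper does not prove Fodor's lemma; it simply states it with a citation to Jech, as is appropriate for a standard preliminary. Your sketch is the correct classical diagonal-intersection argument (and you likewise defer to Jech for details), so you are in full agreement with the paper's treatment.
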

\begin{theorem}[{Nash-Williams, \cite[p. 235 Theorem 3]{nash1960decomposition}}]\label{thm: cycle decomp NW}
A graph $ G $ can be edge-partitioned into cycles if and only if there is no cut $ C $ in $ G $ such that $ \left|C\right| $ is an 
odd natural 
number.
\end{theorem}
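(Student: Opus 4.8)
The plan is to prove the two implications separately. The forward one is routine; for the reverse one, a greedy recursion settles the countable case cleanly, and the real work is handling uncountable graphs, where that recursion's limit stages misbehave.

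For the forward direction, suppose $\mathcal{C}$ is an edge-partition of $G$ into cycles and let $C=\delta_G(X)$ be a cut with $|C|$ a natural number. Travelling once around any fixed cycle, membership in $X$ flips an even number of times, so every cycle meets $\delta_G(X)$ in an even number of edges; since $C$ is finite and the members of $\mathcal{C}$ are pairwise edge-disjoint, at most $|C|$ of them contribute an edge to $C$. Hence $|C|$ is a finite sum of even numbers, so it is even, and $G$ has no odd finite cut.

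For the reverse direction I would assume $G$ has no odd finite cut and first reduce to the connected case: if $X$ lies inside a connected component $H$ of $G$ then $\delta_G(X)=\delta_H(X)$, so $H$ inherits the hypothesis, and an edge-partition into cycles of each component yields one of $G$. Note also that a graph with no odd finite cut has no bridge (a bridge $e$ would give a cut $\delta(X)=\{e\}$), so every one of its edges lies on a finite cycle, obtained by closing a path between the endpoints of that edge which avoids it. The countable case then follows by a greedy recursion: enumerate $E(G)=\{e_n:n<\omega\}$ and build pairwise edge-disjoint cycles $C_0,C_1,\dots$, maintaining that $F_n:=\bigcup_{m<n}C_m$ is finite and that $G\setminus F_n$ has no odd finite cut. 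This invariant is automatic, since $\delta_{G\setminus F_n}(X)=\delta_G(X)\setminus F_n$ and the finite edge-disjoint union of cycles $F_n$ meets every finite cut in an even number of edges while leaving every infinite cut infinite. At step $n$, if $e_n\in F_n$ put $C_n:=\varnothing$; otherwise $e_n$ is an edge of $G\setminus F_n$ and therefore lies on a cycle there, which we take as $C_n$. As $e_n\in F_{n+1}$ for every $n$, the nonempty $C_n$ form the required edge-partition.

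The main obstacle is the uncountable case, where the naive transfinite analogue of this recursion fails. At a limit stage $\lambda$ the set $F_\lambda=\bigcup_{\alpha<\lambda}F_\alpha$ is an infinite edge-disjoint union of cycles; it still meets every finite cut evenly, but it may meet an \emph{infinite} cut $\delta_G(X)$ in a subset whose complement in $\delta_G(X)$ is finite and of odd size, so $G\setminus F_\lambda$ acquires a finite odd cut and the construction stalls. Controlling this interaction between finite cuts and an infinite family of deleted cycles is the heart of the matter, and I would try to proceed less greedily in the manner of Nash-Williams: first decompose $G$ into cycles and two-way infinite paths --- possible because every vertex of $G$ has even or infinite degree --- and then use the no-odd-finite-cut hypothesis to recombine the two-way infinite paths, together with suitable cycles, into finite cycles. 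An induction on $|E(G)|$ carried out along a continuous chain of elementary submodels, in the spirit of the rest of the paper, would run into this very obstruction --- a small piece inheriting the hypothesis only up to finitely many edges of an infinite cut --- which would then have to be absorbed there too.
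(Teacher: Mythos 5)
The paper does not prove this statement; it is cited from Nash-Williams (1960) and used as a black box. So the only thing to assess is whether your attempt stands on its own, and it does not: the uncountable case of the reverse direction is left open, and you say as much yourself.

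What you do have is correct and clean. The forward direction is right: each cycle crosses any cut an even number of times, and a finite cut meets only finitely many members of an edge-partition into cycles, so it has even size. The countable reverse direction is also right: no odd finite cut implies no bridge, so every edge of the remaining graph lies on a cycle; a finite edge-disjoint union of cycles $F_n$ preserves the parity of every finite cut and cannot kill an infinite cut, so the invariant ``$G\setminus F_n$ has no odd finite cut'' holds automatically and the greedy enumeration closes up after $\omega$ steps. But the theorem is stated for arbitrary graphs, and for uncountable $E(G)$ you correctly observe that at a limit ordinal $\lambda$ the deleted set $F_\lambda$ is an infinite union of cycles and may meet an infinite cut $\delta_G(X)$ in all but an odd finite number of its edges, destroying the invariant. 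You then gesture at Nash-Williams' actual route (decompose into cycles and double rays, then recombine) and at an elementary-submodel induction, but you explicitly flag that both run into this same obstruction and you do not resolve it. That resolution is the entire content of the theorem beyond the countable case, so the proof is incomplete as written. If you want to carry it out, the recombination step (Nash-Williams' Theorem 3 proper) is where the no-odd-cut hypothesis must be used globally, not just locally on finite pieces; absorbing the ``odd leakage'' across an infinite cut at limit stages is precisely what needs a new idea, and your sketch does not yet supply one.
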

\begin{corollary}\label{cor: inner Eulerian grapht partionon}
A grapht can be edge-partitioned into cycles and $ T $-paths if and only if it is inner Eulerian.
\end{corollary}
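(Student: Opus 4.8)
The plan is to deduce both implications from the Nash--Williams cycle decomposition theorem (Theorem~\ref{thm: cycle decomp NW}), the ``if'' direction being applied to the graph $G'$ obtained from $G$ by contracting the \emph{whole} terminal set $T$ to a single vertex. The ``only if'' direction does not even need $G'$: suppose $\mathcal{O}$ is an edge-partition of $G$ into cycles and $T$-paths and let $X\subseteq V\setminus T$ with $d(X)$ finite. Since the edge sets of the members of $\mathcal{O}$ partition $E$, we have $d(X)=\sum_{O\in\mathcal{O}}\lvert \delta(X)\cap E(O)\rvert$, with all but finitely many summands equal to $0$. If $O$ is a cycle then the summand is even because a cycle meets any cut in an even number of edges; if $O$ is a $T$-path then both of its end-vertices lie in $T\subseteq V\setminus X$, so again $O$ meets $\delta(X)$ an even number of times. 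Hence $d(X)$ is even, in particular not an odd natural number, so $G$ is inner Eulerian.

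For the converse, assume $G$ is inner Eulerian, fix $t^\ast\in T$, and let $G':=G/\{T\}$ be the graph obtained by contracting $T$ to $t^\ast$ and deleting the resulting loops, which are precisely the edges of $G$ spanned by $T$. I would first check that $G'$ has no cut whose size is an odd natural number. Given $X\subseteq V(G')$, after possibly replacing $X$ by $V(G')\setminus X$ we may assume $t^\ast\notin X$, hence $X\subseteq V\setminus T$. Any edge incident with a vertex of $X$ is not spanned by $T$ (so it survives the contraction), and the contraction alters only terminal end-vertices of edges, none of which lies in $X$; therefore the ``in $X$'' status of every end-vertex is preserved and $\delta_{G'}(X)=\delta_G(X)$. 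As $G$ is inner Eulerian and $X\subseteq V\setminus T$, $d_G(X)$ is not an odd natural number. By Theorem~\ref{thm: cycle decomp NW}, $G'$ admits an edge-partition $\mathcal{C}$ into cycles.

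It remains to lift $\mathcal{C}$ back to $G$. Take $C\in\mathcal{C}$. If $C$ avoids $t^\ast$, then $C$ is already a cycle of $G$, all of whose vertices lie in $V\setminus T$. Otherwise $C$ passes through $t^\ast$ and hence uses exactly two edges $e_1,e_2$ at $t^\ast$; in $G$ the edge $e_i$ joins some terminal $t_i$ to some non-terminal $u_i$ (it cannot join two terminals, as such an edge became a deleted loop), and $C-t^\ast$ is a path $P$ from $u_1$ to $u_2$ whose vertices all lie in $V\setminus T$. Let $W$ be the walk in $G$ obtained by following $e_1$, then $P$, then $e_2$; it runs from $t_1$ to $t_2$ and has no internal vertex in $T$. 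If $t_1\neq t_2$ then $W$ is a $T$-path; if $t_1=t_2$ then $W$ is a cycle of $G$ (a pair of parallel edges when $P$ is a single vertex). In every case we obtain a cycle or a $T$-path of $G$ on the same edge set as $C$. Taking all of these together with the one-edge $T$-paths formed by the edges of $G$ spanned by $T$ (exactly the edges discarded when forming $G'$) yields an edge-partition of $G$ into cycles and $T$-paths, as required.

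The argument is short, and I expect the points needing care to be organizational rather than conceptual: one must remember to reinstate the edges spanned by $T$ that the contraction throws away; one must allow that a cycle of $G'$ through $t^\ast$ can close up at a single terminal and so produces a cycle rather than a $T$-path (harmless, since cycles are permitted in the decomposition); and — the only genuinely infinitary subtlety — the parity computation in the ``only if'' part and the hypothesis of Theorem~\ref{thm: cycle decomp NW} must both be read as statements about cuts of \emph{finite} size, which is precisely the form in which inner Eulerianness is defined.
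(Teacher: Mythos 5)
Your proof is correct and follows essentially the same route as the paper: contract $T$ to a single vertex, invoke the Nash--Williams cycle-decomposition theorem, and lift the cycles back to cycles and $T$-paths of $G$. You spell out details the paper passes over silently — the preservation of odd cuts under contraction, the reinstatement of edges spanned by $T$ as one-edge $T$-paths, and the possibility that a lifted cycle closes at a single terminal and thus yields a cycle rather than a $T$-path — all of which you handle correctly.
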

\begin{proof}
On the one hand, being inner Eulerian is clearly necessary because for $ X\subseteq V(G)\setminus T $, the contribution of 
a $ T $-path or a cycle to $ d_G(X) $ is even. On the other hand, $ G $ being inner Eulerian means that the graph $ G/T $ we 
obtain from $ G $ by contracting $ T $ into a single point has no odd cuts. Thus by Theorem \ref{thm: cycle decomp NW}, 
 $ G/T $ admits an edge-partition into cycles which provides an edge-partition of $ G $ into cycles and $ T $-paths.
\end{proof}

Menger's theorem and the other connectivity-related results that we recall have four versions depending on 
whether the graph is directed and if we consider vertex-disjoint or edge-disjoint paths. We state always just the undirected 
edge-variant which can be derived from any other versions.

Let a grapht $ G=(V,E, I, \{ s,t \}) $  be fixed for the rest of the section.

\begin{theorem}[Aharoni and Berger, \cite{aharoni2009menger}]\label{t: Inf Menger}
There is a system $ \mathcal{P} $ of edge-disjoint $ st $-paths and an $ st $-cut  $ C $ which is \emph{orthogonal} to $ 
\mathcal{P} $.
\end{theorem}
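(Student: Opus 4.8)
The plan is not to reprove the Erdős-Menger conjecture here --- that long argument is the content of \cite{aharoni2009menger} --- but to derive the undirected, edge-disjoint incarnation stated above from the ``master'' form proved there, namely the version for a \emph{directed} graph with \emph{internally vertex-disjoint} $st$-paths. As remarked just before the statement, all four incarnations are interderivable by standard local replacements, and this is the one place in the preliminaries where that derivation has to be carried out.

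First I would bidirect: let $D$ be the digraph on $V$ obtained by replacing each undirected edge $\{u,v\}$ of $G$ by the two arcs $(u,v)$ and $(v,u)$. A system of edge-disjoint $st$-paths in $G$ induces a system of arc-disjoint $st$-paths in $D$ and conversely, with the single caveat that two $D$-paths may use the two antiparallel arcs stemming from one edge $e$ of $G$; whenever that happens one uncrosses the offending pair at $e$, obtaining two $st$-paths that avoid $e$ entirely, so after uncrossing the projected family is genuinely edge-disjoint in $G$ and keeps its endpoints. Next I would convert arc-disjointness into internal vertex-disjointness by the line-digraph construction: let $L$ have vertex set $A(D)$ together with a new source $s^*$ and sink $t^*$, an arc from $a$ to $b$ whenever the head of $a$ is the tail of $b$, an arc $s^*\to a$ for each $a$ leaving $s$, and an arc $a\to t^*$ for each $a$ entering $t$. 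Internally disjoint $s^*t^*$-paths in $L$ correspond to arc-disjoint $st$-walks in $D$ (which trim to paths), and any vertex set of $L$ meeting all such paths corresponds to an arc set of $D$ meeting every $st$-walk, i.e.\ to an arc-$st$-cut of $D$.

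Now I would apply the master version of \cite{aharoni2009menger} to $L$ with terminals $s^*,t^*$: it supplies a family $\mathcal{Q}$ of internally disjoint $s^*t^*$-paths together with an $s^*t^*$-separator selecting exactly one internal vertex from each $Q\in\mathcal{Q}$. Pulling back, I trim the members of $\mathcal{Q}$ to arc-disjoint $st$-paths in $D$, then project and uncross to edge-disjoint $st$-paths $\mathcal{P}$ in $G$, and I let $C\subseteq E$ be the set of edges of $G$ underlying the arcs selected by the $L$-separator. It then remains to check that $C$ is an $st$-cut of $G$ --- one reads off the witnessing vertex set $X$ as the set of vertices still reachable from $s$ after deleting $C$ --- and that $C$ meets each member of $\mathcal{P}$ in exactly one edge, so that $C$ is orthogonal to $\mathcal{P}$.

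The hard part is, of course, the master theorem itself, which is why it is invoked rather than reproved. The one genuine technicality in the reduction is to carry out the trimming and uncrossing coherently across a possibly infinite family \emph{without} spoiling orthogonality: each uncrossing step must be performed so that the rerouted paths still carry a single edge of $C$, which forces a little care in how a cut of $G$ is matched with a separator of $L$ --- but nothing infinitary is needed beyond what already goes into making sense of trimming walks to paths.
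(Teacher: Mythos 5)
The paper does not prove this theorem; it is quoted directly from \cite{aharoni2009menger}, and the only surrounding ``argument'' is the preceding sentence that the four incarnations (directed/undirected, vertex-/edge-disjoint) of infinite Menger are mutually derivable. Your proposal supplies one such derivation --- bidirect $G$, pass to the line digraph $L$ with artificial terminals, invoke the directed vertex-disjoint master theorem of \cite{aharoni2009menger}, and pull back --- and that is indeed the standard architecture, so the route is reasonable.

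The gap is exactly the step you flag and then wave away. After pulling back you hold arc-disjoint $st$-\emph{walks} in the bidirected digraph $D$ together with a set $S$ of $D$-arcs meeting each walk exactly once; you must produce edge-disjoint $st$-\emph{paths} in $G$ and an orthogonal cut. Trimming walks to paths and ``uncrossing the offending pair at $e$'' are local surgeries, and applied one conflict at a time over an infinite family they may cascade indefinitely: each uncrossing reroutes two walks and can create fresh antiparallel conflicts along the swapped tails, and nothing you say guarantees termination, stabilisation, or that every rerouted walk still carries exactly one $S$-arc afterwards. So ``nothing infinitary is needed'' is precisely the assertion that has to be proved. The way to make the reduction go through is structural rather than iterative: one first checks that $S$ equals $\delta^{+}_{D}(X)$, where $X$ is the set of vertices reachable from $s$ in $D-S$ (orthogonality forces every $S$-arc to lie on some walk, and each walk's unique $S$-arc is its unique crossing of $\delta^{+}_{D}(X)\subseteq S$, giving $S\subseteq\delta^{+}_{D}(X)$). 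Then $C:=\delta_{G}(X)$ is an $st$-cut, each walk crosses it exactly once and in a consistent direction, and trimming can be done on each side of $X$ separately without ever losing the cut edge. What remains --- cancelling antiparallel conflicts confined to one side of the cut while still obtaining finite $st$-paths rather than rays or double rays --- is a genuine one-shot decomposition problem and is the one infinitary step the derivation cannot skip. As written, your proposal names this difficulty but does not resolve it.
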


\begin{theorem}[Diestel and Thomassen, \cite{diestel2006cantor}]\label{t: Pym}
Let $ \mathcal{P} $ and  $ \mathcal{Q} $ be systems of edge-disjoint $ st $-paths. Then there exists a system $ \mathcal{R} 
$ of edge-disjoint $ st 
$-paths such that $ \delta_\mathcal{R}(s) \supseteq  \delta_\mathcal{P}(s) $ and $ \delta_\mathcal{R}(t) \supseteq  
\delta_\mathcal{Q}(t) $.
\end{theorem}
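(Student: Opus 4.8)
This is the edge case of the ``Cantor--Bernstein theorem for paths'' of Diestel and Thomassen \cite{diestel2006cantor}, so I would ultimately cite it; but let me sketch how I would prove it directly, since the finite case is transparent and it locates the infinite difficulty precisely. Write $F:=\delta_\mathcal{P}(s)$ and $L:=\delta_\mathcal{Q}(t)$. The plan is to obtain $\mathcal{R}$ as the last term of a sequence $\mathcal{R}_0=\mathcal{P},\mathcal{R}_1,\mathcal{R}_2,\dots$ of edge-disjoint $st$-path systems, each satisfying $\delta_{\mathcal{R}_i}(s)\supseteq F$, with $L\cap\delta_{\mathcal{R}_i}(t)$ strictly increasing until it exhausts $L$; then $\mathcal{R}$ works.

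The core is one augmentation step: starting from an edge-disjoint system $\mathcal{R}$ with $\delta_\mathcal{R}(s)\supseteq F$ and a fixed $\ell=\{z,t\}\in L\setminus\delta_\mathcal{R}(t)$, I want an edge-disjoint system with $\delta(s)\supseteq F$, with $\ell\in\delta(t)$, losing no edge of $L$ at $t$. First orient every member of $\mathcal{R}$ from $s$ to $t$ and form the residual digraph $D$ on $V(G)$: an edge $\{x,y\}$ used by no member contributes both arcs $(x,y),(y,x)$, and one used in the direction $x\to y$ contributes only $(y,x)$. Let $S$ be the set of vertices reachable in $D-t$ from $s$ or from the non-$t$ endpoint of some edge of $\delta_\mathcal{R}(t)\setminus L$. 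The key claim is $z\in S$. Suppose not; put $\bar S:=V(G)\setminus(S\cup\{t\})$, so $z\in\bar S$. No arc of $D$ goes from $S$ to $\bar S$, hence every edge between $S$ and $\bar S$ is used by $\mathcal{R}$ in the direction from $S$ into $\bar S$; consequently no member of $\mathcal{R}$ can return to $S$ once it has entered $\bar S$. So there are exactly $p$ members of $\mathcal{R}$ that meet $\bar S$, where $p<\infty$ is the number of edges between $S$ and $\bar S$, and their last edges are $p$ distinct $\bar S$--$t$ edges; since their non-$t$ endpoints lie outside $S$, these $p$ edges all belong to $L$ by the definition of $S$, and together with the (unused) edge $\ell$ we obtain at least $p+1$ edges of $L$ whose non-$t$ endpoint is in $\bar S$. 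Each of those is the last edge of a different member of $\mathcal{Q}$, which hence meets $\bar S$ and so runs across an edge between $S$ and $\bar S$; as $\mathcal{Q}$ is edge-disjoint, these $\ge p+1$ crossings use pairwise different edges -- impossible, there being only $p$ such edges. Granting the claim, I would choose a path in $D-t$ from $s$, or from the relevant endpoint $w$, to $z$, append the arc $(z,t)$, and augment $\mathcal{R}$ along this $s$--$t$ path, respectively along this closed walk through $t$, in the standard way (discarding the cycles that split off). The resulting system is edge-disjoint, still has $\delta(s)\supseteq F$ -- the walk can leave $s$ only at its start and only along an unused edge, since $\mathcal{R}$ uses no edge oriented into $s$ -- and now contains $\ell$ at $t$, having at worst discarded a single edge of $\delta_\mathcal{R}(t)\setminus L$ there.

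For finite $L$ this terminates after at most $|L|$ steps, which proves the finite version. For infinite $L$ it fails in two ways, both of the sort anticipated in the introduction: the counting in the key claim needs the cut between $S$ and $\bar S$ finite, and -- the real obstacle -- after transfinitely many augmentations there need be no limit system, since the $\mathcal{R}_i$ are not nested and a single edge may get toggled cofinally often. Circumventing this is exactly what \cite{diestel2006cantor} achieves: rather than handling one missing edge at a time, Diestel and Thomassen recombine $\mathcal{P}$ and $\mathcal{Q}$ all at once, by a Cantor--Bernstein-style, back-and-forth decomposition of the union multigraph $\bigcup\mathcal{P}\cup\bigcup\mathcal{Q}$ into pieces that can each be re-routed. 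I would simply invoke their theorem; reproving it -- in effect, making sense of ``infinitely many augmentations at once'' -- is where the work lies.
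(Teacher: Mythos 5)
The paper does not prove this theorem; like you, it simply cites Diestel and Thomassen \cite{diestel2006cantor}, so your proposal matches the paper's approach exactly. Your finite sketch is essentially right; the one spot worth tightening is the augmentation in the ``closed walk through $t$'' case: there you should observe that if the residual $w$--$z$ walk passes through $s$, then its terminal $s$--$z$ segment already witnesses reachability of $z$ from $s$, so one may take the closed walk to avoid $s$ and your invariant $\delta(s)\supseteq F$ is then genuinely untouched rather than argued for only the ``path from $s$'' case.
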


For Erdős-Menger $ st $-cuts $ C $ and 
$ C' $, we write $ C \trianglelefteq C' $, if for every $ st$-path $ P $ if $ <_P $ is the linear order on $ E(P) $ corresponding 
to 
the  $ s \rightarrow t $ direction of $ P $, then $ \min_{<_P} (C \cap E(P) )\leq \min_{<_P} (C'\cap E(P)) $. These definitions 
extend naturally for the setting where we have disjoint, nonempty sets $ S $ and $ T $ instead of distinct points $ s $ 
and $ t $ by contracting $ S $ and $ T $ to one of their respective elements.

\begin{theorem}[\cite{joo2019complete}]
The Erdős-Menger $ st $-cuts form a complete lattice with respect to $ \trianglelefteq  $. 
\end{theorem}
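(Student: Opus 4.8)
The plan is to recode each Erdős-Menger $st$-cut $C$ by its $s$-side $Z_C\subseteq V$ (the vertices reachable from $s$ in $G-C$) and to prove that the family $\mathcal{Z}:=\{Z_C: C \text{ an Erdős-Menger } st\text{-cut}\}$ is closed, inside $(\mathcal{P}(V),\subseteq)$, under arbitrary nonempty intersections and unions; since $\mathcal{Z}\neq\emptyset$ by Theorem~\ref{t: Inf Menger}, this makes $(\mathcal{Z},\subseteq)$ a complete lattice with least element $\bigcap\mathcal{Z}$ and greatest element $\bigcup\mathcal{Z}$. To transport this back to the statement I would first note that $C=\delta(Z_C)$ for every Erdős-Menger $C$: the inclusion $\delta(Z_C)\subseteq C$ holds for any cut, and if some $e\in C$ had both ends in $Z_C$, then splicing the part beyond $e$ of the path of an orthogonal system carrying $e$ (on which $e$ is the only $C$-edge) onto a path from $s$ to an end of $e$ inside $Z_C$ would give an $st$-walk in $G-C$, which is impossible. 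Hence $C\mapsto Z_C$ is injective, and it is an order embedding: $Z_C\subseteq Z_{C'}$ forces $C\trianglelefteq C'$, since on any $st$-path the first vertex lying outside $Z_{C'}$ also lies outside $Z_C$, so the first $C$-edge comes no later than the first $C'$-edge; and conversely, if some $v\in Z_C\setminus Z_{C'}$, take a path $R$ from $s$ to $v$ in $G-C$, its first $C'$-edge $e'$, and the path $Q$ of an orthogonal system for $C'$ carrying $e'$, and replace the part of $Q$ preceding $e'$ by the part of $R$ preceding $e'$ (which avoids both $C$ and $C'$): the resulting $st$-path has $e'$ as its first $C'$-edge and its first $C$-edge strictly beyond $e'$, so $\neg(C\trianglelefteq C')$. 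In particular $\trianglelefteq$ is a partial order on the Erdős-Menger $st$-cuts, and the theorem is reduced to the closure property of $\mathcal{Z}$.

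The engine for that closure is the observation that if in a graph $H$ the edge set $\delta_H(s)$ happens to be a \emph{minimum} $st$-cut, then it is an Erdős-Menger cut. Indeed, Theorem~\ref{t: Inf Menger} provides a system $\mathcal{P}$ of edge-disjoint $st$-paths of maximum cardinality $\lambda_H(s,t)=d_H(s)$; the first edges of its paths are pairwise distinct and lie in $\delta_H(s)$, hence exhaust it, and no path meets $\delta_H(s)$ except in its first edge — so $\delta_H(s)$ is orthogonal to $\mathcal{P}$; symmetrically for $\delta_H(t)$. Now let $C=\delta(Z)$, $C'=\delta(Z')$ be Erdős-Menger $st$-cuts. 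One checks $\delta(Z\cap Z')\subseteq C\cup C'$ and $\delta(Z\cup Z')\subseteq C\cup C'$, so, writing $\kappa$ for the cardinality common to all Erdős-Menger $st$-cuts, both of these cuts have size exactly $\kappa$: for infinite $\kappa$ because $\kappa\le|\delta(Z\cap Z')|\le|C\cup C'|=\kappa$, and for finite $\kappa$ by submodularity of $d_G$. Contracting $Z\cap Z'$ to $s$ yields a graph in which $\delta(Z\cap Z')$ is the edge set at $s$ and is a minimum $st$-cut (the $\kappa$ edge-disjoint $st$-paths of $G$ survive there), hence an Erdős-Menger cut by the observation; contracting $V\setminus(Z\cap Z')$ to a new vertex $t'$ does the same from the other side; and the two resulting orthogonal systems glue, along the pairwise distinct edges of $\delta(Z\cap Z')$, into a system of edge-disjoint $st$-paths of $G$ orthogonal to $\delta(Z\cap Z')$. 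The same argument with $Z\cup Z'$ gives closure under binary unions. (Pym's theorem, Theorem~\ref{t: Pym}, offers an alternative route by merging $\mathcal{P}$ and $\mathcal{Q}$ into a common refinement.)

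It remains to pass from binary to arbitrary nonempty operations. Well-ordering a family $\{Z_i\}_{i<\lambda}$ and forming the $\subseteq$-increasing chain of partial unions $Y_\beta:=\bigcup_{i<\beta}Z_i$, and dually the $\subseteq$-decreasing chain of partial intersections, the successor steps are the binary case, so everything comes down to showing that an increasing union, and a decreasing intersection, of a chain from $\mathcal{Z}$ again lies in $\mathcal{Z}$. If the cofinality of the chain is at most $\kappa$ this is easy: the new cut is then contained in a union of at most $\kappa$ Erdős-Menger cuts, hence has size $\kappa$, and the two-contraction argument applies verbatim. The hard part, and the place I expect the real difficulty, is a chain of cofinality larger than $\kappa$: there is then no a priori cardinality bound on the candidate cut and no canonical limit of the orthogonal systems along the chain, so one must construct an orthogonal system by a transfinite recursion running along the chain — carrying at each stage a system orthogonal to the cut reached so far, dovetailing successive systems by means of Theorem~\ref{t: Pym}, and checking at limit stages both that the union of the systems is still a family of edge-disjoint $st$-paths and that it is orthogonal to the limit cut (which incidentally re-establishes that the limit cut has size $\kappa$). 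Once the closure of $\mathcal{Z}$ is in hand, $(\mathcal{Z},\subseteq)$ is a complete lattice, and the order isomorphism $C\mapsto Z_C$ transports this to the Erdős-Menger $st$-cuts.
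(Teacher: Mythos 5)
Your global strategy---recoding Erdős--Menger cuts by their $s$-sides, checking $C=\delta(Z_C)$, showing $C\mapsto Z_C$ is an order embedding, and then proving closure of the family of $s$-sides under intersections and unions---is reasonable, and the embedding part is essentially correct. But the proof breaks exactly where the infinite content sits. Your ``engine'' is false for infinite graphs: a minimum $st$-cut of the form $\delta_H(s)$ need not be an Erdős--Menger cut. The step ``the first edges of the $d_H(s)$ paths are pairwise distinct and lie in $\delta_H(s)$, hence exhaust it'' is a pure cardinality argument, and $\kappa$ distinct elements of a set of size $\kappa$ need not be all of them when $\kappa$ is infinite. Concretely, take $\aleph_0$ internally disjoint $st$-paths of length two and attach one pendant edge $e$ at $s$: then $\delta(s)$ is a minimum $st$-cut, but it is orthogonal to no path system because $e$ lies on no $st$-path. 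Consequently the two-contraction argument does not establish that $\delta(Z\cap Z')$ or $\delta(Z\cup Z')$ is Erdős--Menger for infinite $\kappa$; submodularity settles only the finite case, which is classical Menger. To prove even the binary case one must genuinely merge the two orthogonal systems witnessing $C$ and $C'$ (the route via Theorem~\ref{t: Pym} that you mention only parenthetically), and that merge is the real work.

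The second gap is the one you flag yourself and then do not close: arbitrary meets and joins. Theorem~\ref{t: Pym} produces at each step a completely new system $\mathcal{R}$, controlled only at $\delta(s)$ and $\delta(t)$; the successive systems along your transfinite recursion are therefore not nested, so there is no ``union of the systems'' available at limit stages, and the introduction of this very paper warns that iterating augmenting-path-type steps need not yield a well-defined limit object. Saying that one must ``check at limit stages that the union is still a family of edge-disjoint $st$-paths orthogonal to the limit cut'' names the obstacle without overcoming it, so the existence of infima and suprema of infinite families---which is what ``complete lattice'' asserts beyond the binary case---is not proved. (Two routine further points, unlike the gaps above: the literal intersection $Z\cap Z'$ need not itself be the $s$-side of $\delta(Z\cap Z')$, only contain it; and the splices such as $sRe'Qt$ must be checked to be paths rather than walks, which does follow here since a shortcut would produce an $st$-path avoiding $C'$.)
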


\begin{lemma}[Augmenting paths for Menger's theorem, \cite{bohme2001menger}]\label{lem: aug path}
Let $ \mathcal{P} $ be a system of edge-disjoint $ st $-paths. Then there exists  
\begin{enumerate}
\item\label{item: no aug} either an $ st $-cut $ C $ orthogonal to 
$ \mathcal{P} $,
\item\label{item: aug}  or a system $ \mathcal{Q} $ of 
edge-disjoint $ st $-paths for which $ \mathcal{P}\triangle \mathcal{Q} $ is finite,
$ \delta_\mathcal{Q}(s) \supset   \delta_\mathcal{P}(s) $ with $ \left|\delta_\mathcal{Q}(s) \setminus   \delta_\mathcal{P}(s) 
\right|=1$   and $ \delta_\mathcal{Q}(t) 
\supset  \delta_\mathcal{P}(t) $ with $ \left|\delta_\mathcal{Q}(t) \setminus   \delta_\mathcal{P}(t) 
\right|=1$.
\end{enumerate}
\end{lemma}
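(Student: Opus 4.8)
The plan is to run the classical augmenting-path argument, arranging that the augmentation changes only finitely much of $\mathcal{P}$. First I would form an auxiliary digraph $D$ on the vertex set $V$: orient each $P\in\mathcal{P}$ from $s$ to $t$; for every edge $e$ of $G$ lying on no member of $\mathcal{P}$ put into $D$ both arcs induced by $e$, and for every edge $e$ lying on some $P\in\mathcal{P}$ put into $D$ only the arc of $e$ opposite to the direction in which $P$ traverses it. A directed path from $s$ to $t$ in $D$ is exactly an ``augmenting walk'' for $\mathcal{P}$, and which alternative of the lemma holds is governed by whether $t$ is reachable from $s$ in $D$.

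Suppose first that $t$ is not reachable from $s$ in $D$, and let $X$ be the set of vertices reachable from $s$; then $s\in X$ and $t\notin X$, so $C:=\delta_G(X)$ is an $st$-cut (there is no edge of $G-C$ between $X$ and $V\setminus X$). I claim $C$ is orthogonal to $\mathcal{P}$. If $e=uv\in C$ with $u\in X$ and $v\notin X$, then $e$ cannot lie off $\bigcup_{P\in\mathcal{P}}E(P)$ (otherwise the arc $(u,v)$ would be in $D$, forcing $v\in X$), and it cannot lie on a path $P$ that traverses it from $v$ to $u$ (again $(u,v)\in D$ and $v\in X$); hence, by edge-disjointness, $e$ lies on exactly one $P\in\mathcal{P}$, and that $P$ traverses it from $X$ to $V\setminus X$. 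Since every edge of $C$ is traversed ``outward'' along its own path, while consecutive crossings of a cut by a single path go in opposite directions, each $P\in\mathcal{P}$ meets $C$ exactly once (at least once, as its endpoints lie on opposite sides of $X$). Thus $P\mapsto E(P)\cap C$ is a bijection from $\mathcal{P}$ onto $C$, which is the first alternative.

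Otherwise fix a shortest directed path $W$ from $s$ to $t$ in $D$, so $W$ is finite. Its first arc uses an edge $e_1\notin\bigcup_{P\in\mathcal{P}}E(P)$, since every $D$-arc leaving $s$ comes from a non-path edge at $s$; likewise its last arc uses an edge $e_k\notin\bigcup_{P\in\mathcal{P}}E(P)$ entering $t$. Let $\mathcal{P}_0\subseteq\mathcal{P}$ be the finite family of paths containing an edge of $W$, set $m:=\left|\mathcal{P}_0\right|$, and let $H$ be the finite subgraph with edge set $\big(\bigcup_{P\in\mathcal{P}_0}E(P)\big)\cup E(W)$, which contains $W$. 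Inside $H$, augmenting the $m$ edge-disjoint $st$-paths $\mathcal{P}_0$ along $W$ in the usual (finite) way and discarding the resulting cycles yields a family $\mathcal{Q}_0$ of $m+1$ edge-disjoint $st$-paths in $H$ with $\delta_{\mathcal{Q}_0}(s)=\delta_{\mathcal{P}_0}(s)\cup\{e_1\}$ and $\delta_{\mathcal{Q}_0}(t)=\delta_{\mathcal{P}_0}(t)\cup\{e_k\}$. Put $\mathcal{Q}:=\mathcal{Q}_0\cup(\mathcal{P}\setminus\mathcal{P}_0)$. The paths of $\mathcal{Q}_0$ use only edges of $H$, which are disjoint from the edges of $\mathcal{P}\setminus\mathcal{P}_0$, so $\mathcal{Q}$ is again a system of edge-disjoint $st$-paths; $\mathcal{P}\triangle\mathcal{Q}$ is finite because it is contained in $\mathcal{P}_0\cup\mathcal{Q}_0$; and $\delta_{\mathcal{Q}}(s)=\delta_{\mathcal{P}}(s)\cup\{e_1\}$, $\delta_{\mathcal{Q}}(t)=\delta_{\mathcal{P}}(t)\cup\{e_k\}$ with $e_1$ and $e_k$ genuinely new. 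This is the second alternative.

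The only real difficulty is the bookkeeping in the second case: one has to localise the augmentation to the finite subgraph $H$ (so that $\mathcal{P}\triangle\mathcal{Q}$ stays finite and the finite augmentation step applies verbatim), and then verify that exactly one previously unused edge appears at each of $s$ and $t$, that no old edge at $s$ or $t$ is destroyed, and that the cycles possibly created along the way can be discarded without harm. Over a finite graph all of this is completely routine; passing to the infinite setting only forces the localisation to $H$ to be made explicit.
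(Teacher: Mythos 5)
The paper does not prove this lemma; it states it and cites \cite{bohme2001menger}, so there is no in-paper argument to compare against. Your proof is correct and is the natural residual-digraph augmenting-path argument, with the one genuinely infinite wrinkle handled properly: since $D$-arcs leaving $s$ and entering $t$ only come from edges off $\bigcup_{P\in\mathcal P}E(P)$, a shortest $s$--$t$ path $W$ in $D$ is finite, it interacts with only the finite subfamily $\mathcal P_0\subseteq\mathcal P$, and the finite augmentation can be carried out verbatim inside the finite subgraph $H$ and then glued back to $\mathcal P\setminus\mathcal P_0$ (disjointness of $E(H)$ from $E(\mathcal P\setminus\mathcal P_0)$ is exactly what makes this gluing legal). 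The orthogonality argument in case (1) is also right: every cut edge is a forward traversal along its own path, and since a finite $st$-path alternates directions across $\delta_G(X)$, each member of $\mathcal P$ crosses exactly once, giving the required bijection $\mathcal P\to C$.

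One point you leave implicit that is worth making explicit: in case (2), you need both that no member of $\delta_{\mathcal P_0}(s)$ is lost and that no cycle of the flow decomposition passes through $s$ (or $t$), so that $\delta_{\mathcal Q_0}(s)=\delta_{\mathcal P_0}(s)\cup\{e_1\}$ rather than merely $\supseteq$. This follows because $W$, being a path, meets $s$ only in its first arc, which is a non-path edge, so after augmenting every residual unit at $s$ still points away from $s$; hence $s$ has out-degree $m+1$ and in-degree $0$ in the augmented flow, and no cycle can visit it. The symmetric statement holds at $t$. Adding this sentence closes the only gap in the write-up.
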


For the following couple of lemmas, we could not find really fitting references, thus we include their short proofs. As far we 
know, the 
phenomenon of  ``tight'' sets in the context of the infinite version of König's theorem \cite{aharoni1984konig} and beyond was
discovered by Podewski and Steffens \cite{podewski1976injective, steffens1977matchings} and it is known that it holds in 
the 
context of the infinite Menger's theorem as well: 

\begin{lemma}\label{lem: tight cut}
Suppose that $ s $ is linked and every path-system that links $ s $ uses the 
edge $ e\in E\setminus 
\delta(s) $. Then the $ \trianglelefteq $-smallest Erdős-Menger $ st $-cut $ C $ that contains $ e $ is \emph{tight} in the 
sense 
that 
every linkage of $ s $ is orthogonal to $ C $.
\end{lemma}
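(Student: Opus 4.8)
The plan is to reduce the statement to the following assertion about an arbitrary linkage: \emph{for every linkage $\mathcal{P}$ of $s$ there is an Erdős-Menger $st$-cut that contains $e$ and is orthogonal to $\mathcal{P}$}, with the $\trianglelefteq$-smallest Erdős-Menger cut through $e$ appearing along the way. The main tool is the auxiliary digraph $\vec{D}_{\mathcal{P}}$ associated with a linkage $\mathcal{P}$ of $s$, i.e. the edge-disjoint-paths counterpart of a residual network: orient every $P\in\mathcal{P}$ from $s$ to $t$, put, for each edge lying on some member of $\mathcal{P}$, one arc against its orientation on that path, and put, for each edge used by no member of $\mathcal{P}$, arcs in both directions. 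Since $\mathcal{P}$ covers $\delta(s)$, Lemma~\ref{lem: aug path} forbids a directed $s\to t$ walk in $\vec{D}_{\mathcal{P}}$ --- otherwise the usual symmetric-difference manipulation would produce a system $\mathcal{Q}$ with $\delta_{\mathcal{Q}}(s)\supsetneq\delta_{\mathcal{P}}(s)=\delta(s)$ --- and in particular no arc leaves $s$.

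The heart of the argument is the next step. Fix a linkage $\mathcal{P}$ of $s$. As $e$ lies on every linkage of $s$, there is a unique $P^{*}\in\mathcal{P}$ through $e$; write $e=xy$ with $x$ before $y$ in the $s\to t$ orientation of $P^{*}$, and let $Z$ be the set of vertices reachable from $x$ in $\vec{D}_{\mathcal{P}}$. Walking back along $P^{*}$ shows $s\in Z$, and reading off which arcs are available (used edges contribute only their backward arc, unused edges both) shows that $Z\cap V(P)$ is an initial segment of $P$ for every $P\in\mathcal{P}$ and that every edge of $\delta(Z)$ lies on some member of $\mathcal{P}$; so, as soon as $t\notin Z$, each $P\in\mathcal{P}$ crosses $\delta(Z)$ exactly once and these crossings exhaust $\delta(Z)$. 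The decisive point --- the only place the hypothesis on $e$ is used --- is that $y\notin Z$; since $Z\cap V(P^{*})$ is an initial segment of $P^{*}$ containing $x$ but not $y$, it then equals $V(sP^{*}x)$, so $t\notin Z$ and $e\in\delta(Z)$. To prove $y\notin Z$, I would assume $y\in Z$ and take a simple $\vec{D}_{\mathcal{P}}$-walk $R$ from $x$ to $y$ (if every such walk ran through $t$, then $t\in Z$ and an entirely analogous argument applies); being simple it avoids $s$ and never returns to $x$, so it uses no arc ending at $x$ and hence its underlying edge set avoids $e$ (the sole arc of $e$ ends at $x$). Deleting $e$ and taking the symmetric difference of $E(\mathcal{P})$ with the (cycle-free) edge set of $R$ gives, after discarding cycles, a system of $\left|\mathcal{P}\right|$ edge-disjoint $st$-paths which still covers $\delta(s)$ (because $R$ avoids $s$) and misses $e$; that is a linkage of $s$ in $G\setminus\{e\}$, contradicting the hypothesis. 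Hence $D_{\mathcal{P}}:=\delta(Z)$ is an Erdős-Menger $st$-cut containing $e$ and orthogonal to $\mathcal{P}$; moreover, the same computation shows that $Z$ is the $\subseteq$-smallest $s$-side of a cut orthogonal to $\mathcal{P}$ through $e$, so $D_{\mathcal{P}}$ is the $\trianglelefteq$-smallest Erdős-Menger cut orthogonal to $\mathcal{P}$ that contains $e$.

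It remains to assemble these. By the previous paragraph the family of Erdős-Menger $st$-cuts containing $e$ is nonempty, so by the completeness of the lattice of such cuts \cite{joo2019complete} it has a $\trianglelefteq$-infimum $C$; a further use of the hypothesis on $e$ --- via the fact that the $s$-side of each such cut contains one fixed endpoint of $e$ --- keeps $e$ in this infimum, so $C$ is exactly the $\trianglelefteq$-smallest Erdős-Menger cut containing $e$. Now let $\mathcal{P}$ be any linkage of $s$. Since $s$ is linked, $\delta(s)$ is an Erdős-Menger $st$-cut, and as its trace on any $st$-path is the very first edge it is the $\trianglelefteq$-least one; hence $\delta(s)\trianglelefteq C\trianglelefteq D_{\mathcal{P}}$. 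Were $C$ orthogonal to $\mathcal{P}$ we would be done; and conversely, if $C$ is \emph{not} orthogonal to $\mathcal{P}$ then $C\neq D_{\mathcal{P}}$ (the latter being orthogonal to $\mathcal{P}$), so $C\triangleleft D_{\mathcal{P}}$ strictly --- and the final task is to exclude this, i.e. to show that the minimal Erdős-Menger cut through $e$ is already orthogonal to every linkage. This I would obtain by verifying that $D_{\mathcal{P}}$ does not depend on $\mathcal{P}$: for two linkages $\mathcal{P}$, $\mathcal{P}'$ one compares $\vec{D}_{\mathcal{P}}$ and $\vec{D}_{\mathcal{P}'}$ and uses that an Erdős-Menger cut $\trianglelefteq$-below a cut orthogonal to a linkage is again orthogonal to it, a fact extracted from the structure of the lattice of \cite{joo2019complete}.

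I expect the genuine obstacle to be that the infinite setting removes all parity and cardinality bookkeeping, so orthogonality of a linkage to a cut must be read off structurally from $\vec{D}_{\mathcal{P}}$ rather than by counting edges; concretely, the two places needing the most care are the symmetric-difference computation that turns a rerouting walk into an honest linkage avoiding $e$, and the last paragraph's use of the lattice of \cite{joo2019complete} to keep $e$ inside the relevant infima and to propagate orthogonality downward --- it is precisely the assumption that $e$ lies on every linkage of $s$ that makes both of these go through.
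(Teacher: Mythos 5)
Your core construction — building $Z$ as the set of vertices reachable from the tail $x$ of $e$ in the residual digraph $\vec D_{\mathcal P}$, and then showing $t\notin Z$ via the hypothesis that every linkage uses $e$ — is sound and is essentially the content of the paper's own argument. The paper gets the same cut more succinctly: it applies the augmenting-path lemma (Lemma~\ref{lem: aug path}) to $\mathcal P - P_e$ in $G-e$, observes that the augmenting alternative would produce a linkage of $s$ avoiding $e$, and so extracts an $st$-cut $C'$ orthogonal to $\mathcal P-P_e$; adding $e$ back gives an Erdős-Menger cut through $e$ orthogonal to $\mathcal P$. This is exactly your $D_{\mathcal P}$, just obtained without unfolding the residual digraph by hand. (Your caveat about walks through $t$ is a non-issue: the cycle augmentation and the subsequent path/cycle decomposition at $s$ work regardless.)

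Where your proposal genuinely breaks is the last step, the passage from ``there is an Erdős-Menger cut through $e$ orthogonal to $\mathcal P$'' to ``the $\trianglelefteq$-smallest Erdős-Menger cut through $e$ is orthogonal to $\mathcal P$''. You try to get this by proving $D_{\mathcal P}$ independent of $\mathcal P$, and for that you invoke the claim that ``an Erdős-Menger cut $\trianglelefteq$-below a cut orthogonal to a linkage is again orthogonal to it'', attributed to the lattice structure of \cite{joo2019complete}. That reference establishes only that the Erdős-Menger cuts form a complete lattice; the orthogonality-propagates-downward statement is neither stated nor an obvious consequence, and you give no argument for it. Worse, if it were available off the shelf it would make the entire lemma a one-liner ($C\trianglelefteq D_{\mathcal P}$ and $D_{\mathcal P}$ orthogonal to $\mathcal P$ would immediately give $C$ orthogonal to $\mathcal P$), so the whole ``independence'' detour would be superfluous — a sign that the claim is doing unearned work. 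As it stands, this is a real gap, not a routine verification.

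The paper closes this gap by a WLOG you do not use: contract the $t$-side of $C$ to $t$, so that $C=\delta_G(t)$. After this contraction $C$ is simultaneously the $\trianglelefteq$-largest $st$-cut and (by the hypothesis on minimality) the $\trianglelefteq$-smallest Erdős-Menger cut through $e$, i.e.\ it is the unique one. Then any Erdős-Menger cut through $e$ orthogonal to $\mathcal P$ — your $D_{\mathcal P}$, or the paper's $C'+e$ — is forced to coincide with $C$, and no comparison between different linkages is needed. Replacing your step 3 with this reduction would make the proof complete, at which point your argument and the paper's are essentially the same.
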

\begin{proof}
Suppose for a contradiction that $ G $ and $ e $ form a counterexample. We can assume without loss of generality that $ 
C=\delta_G(t) $ because after the contraction of the $ t $-side of $ C $ to $ t $, it remains a counterexample. By the indirect 
assumption there exists a linkage $ \mathcal{P} $ for $ s $ that
does not cover $ \delta_G(t) $.  Let $ P_e $ be the unique path in $ \mathcal{P} $ through $ e $. By applying Lemma 
\ref{lem: aug path} with the path-system  $ \mathcal{P}-P_e $ in $ G-e $, the alternative (\ref{item: aug}) is impossible 
because the resulting 
path-system $ 
\mathcal{Q} $ would 
cover $ \delta_{G'}(s)=\delta_G(s) $ without using $ e $. It follows that there is an $ st $-cut $ C' $ in $ G-e $ orthogonal to $ 
\mathcal{P}-P_e 
$. The path $ P_e-e $ lives in $ G-e $ and does not meet $ C' $, thus $ C'+e $ is an Erdős-Menger $ st $-cut in $ G $ with the 
same sides as 
$ C' $. Clearly $ C'+e \trianglelefteq \delta_G(t) $. Furthermore, $ C'+e \neq \delta_G(t) $ because $ C'+e $ is orthogonal to $ 
\mathcal{P} $ and $ \delta(t) $ is not. This contradicts the $ \trianglelefteq $-minimality of $ C $.
\end{proof}

The following lemma is about a fundamental property of the so-called (infinite) gammoids 
\cite{carmesin2014topological, 
afzali2015infinite}:
\begin{lemma}\label{lem: missing exactly one}
Suppose that $ \mathcal{P} $ is a linkage of $ s $ with $ \left|\delta(t)\setminus E(\mathcal{P}) 
\right|=1$ and there is no linkage $ \mathcal{P}' $ for $ s $ with
$ \delta_{\mathcal{P}'}(t)\subsetneq \delta_{\mathcal{P}}(t) $. Then $ \left|\delta(t)\setminus E(\mathcal{Q}) \right|\leq1$ 
holds for every linkage $ \mathcal{Q} $ for $ s $.
\end{lemma}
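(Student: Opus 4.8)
The plan is to argue by contradiction. Suppose $\mathcal{Q}$ is a linkage for $s$ with $\left|\delta(t)\setminus E(\mathcal{Q})\right|\geq 2$; I will manufacture a linkage $\mathcal{P}'$ for $s$ with $\delta_{\mathcal{P}'}(t)\subsetneq\delta_{\mathcal{P}}(t)$, contradicting the hypothesis on $\mathcal{P}$. Put $D:=\delta(t)\setminus\delta_{\mathcal{Q}}(t)$ and let $f$ be the unique edge of $\delta(t)\setminus\delta_{\mathcal{P}}(t)$. If $f\in D$, I am done immediately: for any $e\in D\setminus\{f\}$ we have $\delta_{\mathcal{Q}}(t)\subseteq\delta(t)\setminus\{f,e\}\subsetneq\delta(t)\setminus\{f\}=\delta_{\mathcal{P}}(t)$, the inclusion being proper since $e\in\delta_{\mathcal{P}}(t)\setminus\delta_{\mathcal{Q}}(t)$, so $\mathcal{P}':=\mathcal{Q}$ works. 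Hence from now on $f\in E(\mathcal{Q})$. Let $Q_f\in\mathcal{Q}$ be the path through $f$ and $g\in\delta(s)$ its edge at $s$; note $g\neq f$, because $f\notin\delta(s)$ (otherwise $f$ alone would be an $st$-path lying in $\delta(s)$, hence used by every linkage for $s$, contradicting $f\notin E(\mathcal{P})$).

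The key move is to delete $f$. In $G-f$ the system $\mathcal{P}$ becomes a perfect linkage: it covers $\delta_{G-f}(s)=\delta(s)$ and $\delta_{G-f}(t)=\delta(t)\setminus\{f\}$. Meanwhile $\mathcal{Q}-Q_f$ is a system of edge-disjoint $st$-paths in $G-f$ covering $\delta(s)\setminus\{g\}$ at $s$ and missing exactly $D$ at $t$. I would now apply Lemma \ref{lem: aug path} to $\mathcal{Q}-Q_f$ in $G-f$. If the augmenting alternative holds, the new system $\mathcal{Q}'$ gains exactly one edge at $s$ — necessarily $g$, so $\mathcal{Q}'$ covers $\delta_{G-f}(s)$ — and exactly one edge at $t$, lying in $D$; since $\left|D\right|\geq 2$, the trace $\delta_{\mathcal{Q}'}(t)$ still avoids $f$ together with at least one further edge of $D$, so $\delta_{\mathcal{Q}'}(t)\subsetneq\delta(t)\setminus\{f\}=\delta_{\mathcal{P}}(t)$. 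As $\mathcal{Q}'$ is a linkage for $s$ in $G$ (it avoids $f$), $\mathcal{P}':=\mathcal{Q}'$ gives the contradiction.

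The hard part will be the other alternative of Lemma \ref{lem: aug path}: it produces only an $st$-cut $C=\delta_{G-f}(X)$ with $s\in X$ that is orthogonal to $\mathcal{Q}-Q_f$, and in the infinite setting $\left|C\right|$ can equal the maximal number of $st$-paths, so there is no direct cardinality clash with the perfect linkage $\mathcal{P}$. To finish I would cut $G-f$ along $C$: let $H$ be $G-f$ with $X$ contracted to a vertex $\hat s$ and $H'$ be $G-f$ with $V(G-f)\setminus X$ contracted to a vertex $\hat t$, so that $\delta_H(\hat s)=\delta_{H'}(\hat t)=C$. Orthogonality forces every path of $\mathcal{Q}-Q_f$ to cross $C$ exactly once, so contracting the parts of these paths lying in $X$ turns $\mathcal{Q}-Q_f$ into a linkage $\hat{\mathcal{Q}}$ of $\hat s$ in $H$ that still misses $D$ at $t$, and simultaneously into a system linking $\hat t$ in $H'$; contracting the initial segment of each path of $\mathcal{P}$ up to its first $C$-edge turns $\mathcal{P}$ into a system linking $s$ in $H'$. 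By Pym's theorem (Theorem \ref{t: Pym}) applied in $H'$ to these two systems, $H'$ has a perfect linkage $\mathcal{R}_1$, i.e.\ a system of edge-disjoint $s\hat t$-paths covering $\delta_{H'}(s)$ and $\delta_{H'}(\hat t)=C$. Now I splice $\mathcal{R}_1$ with $\hat{\mathcal{Q}}$ along $C$: for each $c\in C$ take the path of $\mathcal{R}_1$ ending with $c$ (its portion lies in $X$), follow $c$, and continue with the portion in $V(G-f)\setminus X$ of the path of $\hat{\mathcal{Q}}$ starting with $c$. This yields a system $\mathcal{R}$ of edge-disjoint $st$-paths in $G-f$ that covers $\delta(s)$ (the traces of $\mathcal{R}$ and $\mathcal{R}_1$ at $s$ coincide) and whose trace at $t$ equals that of $\hat{\mathcal{Q}}$, namely $\delta_{G-f}(t)\setminus D=\delta(t)\setminus(\{f\}\cup D)$. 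Since $D\neq\emptyset$ and $f\notin D$, this is a proper subset of $\delta_{\mathcal{P}}(t)$; as $\mathcal{R}$ avoids $f$ it is a linkage for $s$ in $G$, so $\mathcal{P}':=\mathcal{R}$ completes the contradiction. I expect the only delicate points to be the routine verifications that the contracted and spliced families are genuinely edge-disjoint and have the stated traces; the conceptual content is that the finite parity/counting step is replaced here by this contract–Pym–splice maneuver.
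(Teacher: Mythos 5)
Your proof is correct, and the overall strategy coincides with the paper's: derive the contradiction, observe that the missed edge $f$ ($=e$ in the paper) must lie in $E(\mathcal{Q})$, pass to $\mathcal{Q}-Q_f$ in $G-f$, invoke Lemma~\ref{lem: aug path}, and in the cut alternative splice along the orthogonal cut $C$ to manufacture a linkage of $s$ whose $t$-trace is properly contained in $\delta_{\mathcal{P}}(t)$. The one place where you and the paper diverge is the final splice. You first contract $X$ and its complement to form $H$ and $H'$, apply Pym's theorem in $H'$ to get a system $\mathcal{R}_1$ covering both $\delta(s)$ and all of $C$, and only then splice $\mathcal{R}_1$ with $\hat{\mathcal{Q}}$. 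The paper skips the contraction and the appeal to Pym altogether: for each $P\in\mathcal{P}$ it takes the initial segment of $P$ up to its first edge $e_P\in C$ and continues along the unique $\mathcal{Q}'$-path through $e_P$, producing $sPe_PQ't$. Since the initial segments live in $X$, the tails live outside $X$, and orthogonality matches distinct $e_P$'s to distinct $\mathcal{Q}'$-paths, this directly yields an edge-disjoint linkage of $s$ with $\delta_{\mathcal{P}'}(t)\subseteq\delta_{\mathcal{Q}'}(t)\subsetneq\delta_{\mathcal{P}}(t)$. Your Pym step buys the extra property that the $t$-trace equals $\delta_{\mathcal{Q}'}(t)$ exactly rather than being a subset of it, but that precision is not needed for the contradiction, so the contract--Pym--splice maneuver, while sound, is more machinery than the argument requires.
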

\begin{proof}
Let $ e $ be the unique element of $ \delta(t)\setminus 
E(\mathcal{P}) $. Suppose for a contradiction that there exists a linkage $ \mathcal{Q} $ for $ s $ with $ 
\left|\delta(t)\setminus E(\mathcal{Q}) 
\right|>1$. Then we must have $ e\in E(\mathcal{Q}) $, since otherwise $ \mathcal{P}'=\mathcal{Q} $ contradicts 
the premise of the lemma. Let $ Q_e $ be the unique path in $ \mathcal{Q} $ through $ e $ and let $ 
\mathcal{Q}':=\mathcal{Q}-Q_e $. By applying Lemma \ref{lem: aug path} with  
the path-system  $ \mathcal{Q}' $ in $ G-e $ the alternative (\ref{item: aug}) is impossible because the resulting path-system 
$ \mathcal{P}'$ would 
contradict
the premise of the lemma. It follows that there is an $ st $-cut $ C $ in $ G-e $ orthogonal to $ \mathcal{Q}' $. But then the 
path-system $ \mathcal{P}' $ that we obtain by taking each  $ P\in \mathcal{P} $ from $ s $ until the first common 
edge $ e_P $ with $ C $ and continue up to $ t $ along the unique path in $ \mathcal{Q}' $ through $ e_P $ contradicts again 
the premise of the lemma.
\end{proof}

\section{Reduction to the Linkage theorem}\label{sec: redu linkage}
In order to cover $ \bigcup_{t\in T}\delta_{G}(t) $ by edge-disjoint $ T $-paths, i.e. to have a perfect linkage, it is obviously 
necessary that for each 
individual $ t\in T $ the edge set $ \delta_G(t) $ can be covered by such paths, i.e. that the linkability condition holds.  
Maybe surprisingly, in the inner 
Eulerian case 
this is already sufficient as well:
\begin{theorem}[Linkage theorem]\label{thm: linkage}
Every inner Eulerian graft  that satisfies the linkability condition admits a perfect linkage.
\end{theorem}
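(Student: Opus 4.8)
The plan is to prove the Linkage theorem in two stages: reduce it to the case of a countable grapht, and then settle the countable case directly by an exhaustive ``one $T$-path at a time'' construction. One fact used throughout is that deleting the edge set of a single $T$-path (or cycle) from an inner Eulerian grapht keeps it inner Eulerian, since such a subgraph meets $\delta(X)$ in an even set for every $X\subseteq V\setminus T$; so in the countable construction the residual grapht stays inner Eulerian at every finite stage for free, and the real work everywhere is to keep the \emph{linkability condition} alive.

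\emph{The countable case.} Let $G$ be countable, inner Eulerian and linkable, and enumerate $\bigcup_{t\in T}\delta_G(t)=\{e_n:n<\omega\}$. I would build an increasing $\omega$-sequence of \emph{finite} systems $\mathcal{P}_0\subseteq\mathcal{P}_1\subseteq\cdots$ of edge-disjoint $T$-paths, with residual graphts $G_n:=G\setminus\bigcup\mathcal{P}_n$, so that each $G_n$ is inner Eulerian and linkable and $\{e_0,\dots,e_{n-1}\}\subseteq\bigcup\mathcal{P}_n$; then $\mathcal{P}:=\bigcup_n\mathcal{P}_n$ covers $\bigcup_{t\in T}\delta_G(t)$ and is the desired perfect linkage. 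The inductive step is exactly the ``make one path'' lemma (Lemma \ref{lem: 1path make}): in an inner Eulerian, linkable grapht, every edge incident with a terminal lies on some $T$-path whose deletion keeps the grapht linkable. To prove that lemma, assume a counterexample and apply Lovász' splitting-off at non-terminal vertices: splitting off a carefully chosen pair of edges at a non-terminal keeps the grapht a counterexample while strictly decreasing a finite parameter (roughly, the total degree of the non-terminals inside a fixed finite ``core'' carrying the relevant $t$-cuts), so after finitely many steps one reaches a grapht with trivial core in which the assertion is immediate. The point where Lovász uses parity --- that two edges at a terminal may be deleted safely --- is replaced by the structural Lemma \ref{lem: deletion of two edges}, whose own proof rests on how linkages sit relative to tight Erdős-Menger $t$-cuts (Lemmas \ref{lem: tight cut} and \ref{lem: missing exactly one}) together with the augmenting-path dichotomy (Lemma \ref{lem: aug path}).

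\emph{The reduction to the countable case.} Induct on $\kappa:=|E(G)|$, the base case $\kappa\le\aleph_0$ being the above (isolated terminals may be ignored, so $|V|\le|E|+\aleph_0$ without loss of generality). Assume first $\kappa$ is regular and uncountable, fix an increasing continuous chain $\langle M_\alpha:\alpha<\kappa\rangle$ of elementary submodels with $G\in M_0$, $|M_\alpha|<\kappa$ and $E(G)\subseteq\bigcup_{\alpha<\kappa}M_\alpha$, and put $G_\alpha:=G\cap M_\alpha$. The plan is to construct the perfect linkage layer by layer: having linked the edges of $G_\alpha$, additionally link the fewer than $\kappa$ edges of $G_{\alpha+1}$ not yet handled, by applying the inductive hypothesis to a grapht of size $<\kappa$ obtained from $G_{\alpha+1}$ by contracting to single vertices both the already-handled part and the not-yet-reached part (as with the two sides of a cut; the contractions, and the way a perfect linkage of the quotient is lifted back through a contracted blob, are taken care of by the preparatory lemmas of Section \ref{sec: redu linkage} via the infinite Menger theorem (Theorem \ref{t: Inf Menger}) and Pym's theorem (Theorem \ref{t: Pym})). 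For this to work the small grapht built at stage $\alpha$ must again be inner Eulerian --- a routine property of the contractions involved --- and linkable; the latter is exactly where Lemma \ref{lem: no stat error} enters, yielding a club $C\subseteq\kappa$ along which passing from $G$ to $G_\alpha$, and removing the $\alpha$-th layer, preserves a suitable \emph{weakening} of linkability, still strong enough to keep the construction going and to reassemble a genuine perfect linkage in the end, with Fodor's lemma (Lemma \ref{lem: Fodor}) doing the combinatorial bookkeeping. Finally, for singular $\kappa$ one writes $G$ as a continuous increasing union of graphts of size $<\kappa$ and combines the solutions provided by the regular case by the same club-and-Fodor argument (singular cardinal compactness).

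\emph{Where the difficulty lies.} The genuine obstacles are two. In the countable case it is Lemma \ref{lem: 1path make}, and inside it Lemma \ref{lem: deletion of two edges}: to push the splitting-off argument through one must replace Lovász' tidy numerical step (``$m<n$ and $m\equiv n$ modulo $2$ force $m+2\le n$'') by a purely structural statement about the interaction of linkages with tight cuts, and this is the technically heaviest component of the proof. In the uncountable reduction the obstacle is pinning down the \emph{correct} invariant that Lemma \ref{lem: no stat error} can maintain along the chain: the full linkability condition cannot be expected to survive the cutting, only a weakening of it, and this weakening has to be delicate enough to be preserved on a club yet robust enough to still drive the layer-by-layer extension and to glue into an honest perfect linkage.
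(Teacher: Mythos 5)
Your countable-case plan is essentially the paper's: enumerate $\bigcup_{t\in T}\delta_G(t)$, add one $T$-path at a time via Lemma \ref{lem: 1path make}, and prove that lemma by Lovász-style splitting-off with the structural Lemma \ref{lem: deletion of two edges} (itself via Lemmas \ref{lem: tight cut}, \ref{lem: missing exactly one}, \ref{lem: aug path}) replacing the parity argument. No issues there.

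Your uncountable reduction, however, diverges in a way that creates a real gap. You propose to extend a partial linkage ``layer by layer,'' at each stage contracting the already-handled and the not-yet-reached parts and applying induction to the quotient. The paper does \emph{not} contract anything in the assembly, and for good reason: the Lifting lemma (Observation \ref{obs: lift up}) requires the contracted blobs to be \emph{boundary-linked}, which your scheme never establishes; and even if it did, the lifted $T$-paths would run through edges of the not-yet-reached part and collide with what you need in later layers, since the contraction hides exactly those edges. The paper instead \emph{edge-partitions} $G$: after a single application of Lemma \ref{lem: joker} (with $T'=T$) to assume $\delta(t)$ is the unique Erdős-Menger $t$-cut for each $t\in T$, it takes a club $U=\{\alpha_\xi\}$ from Lemma \ref{lem: no stat error} and sets $G_{-1}:=G\cap M_{\alpha_0}$ and $G_\xi:=(G\setminus M_{\alpha_\xi})\cap M_{\alpha_{\xi+1}}$. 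Each $G_\xi$ is a subgrapht on the \emph{same} vertex set $V$; by continuity the $G_\xi$ edge-partition $G$; Observations \ref{obs: prev inner Euler} and \ref{obs: in-prev linkability} together with the club property make each $G_\xi$ inner Eulerian and linkable with $|E(G_\xi)|<\kappa$; and the union of the inductively obtained perfect linkages $\mathcal{P}_\xi$ is automatically a perfect linkage of $G$ because the pieces share no edges. No contraction, no lifting, no inter-layer conflicts, and Fodor's lemma appears only inside the proof of Lemma \ref{lem: no stat error}, not in the assembly. Also, your description of Lemma \ref{lem: no stat error}'s output is slightly off: it yields the \emph{full} linkability condition for $G\setminus M_\alpha$ on a club, not a weakening of it (you also write $G_\alpha=G\cap M_\alpha$ where the lemma concerns $G\setminus M_\alpha$); what is ``weakened'' in passing to the tail is only the ``unique Erdős-Menger cut'' hypothesis. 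The singular case is likewise handled by an edge-partition built from a matrix of submodels, not by gluing layered contractions.
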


In the rest of the section, we show that Theorem \ref{thm: linkage} implies our main result \ref{thm: LCh inf} and introduce 
some 
tools that we need later.

Let a grapht  $ G=(V,E,I,T) $ be fixed for the rest of the section. 
\begin{observation}[Lifting lemma]\label{obs: lift up}
Let $ \mathcal{F}=\{ X_t:\ t\in T \} $ be a family of pairwise disjoint subsets of $ V $ where  $ t\in X_t $ and $ X_t $ is 
\emph{boundary-linked} in the sense that there is a 
system $ \mathcal{P}_{t}=\{ P_{t,e}:\ e\in \delta_G(X_t) \} $ of edge-disjoint paths where $ e $ 
is 
the first edge and $ t $ is the last vertex of $ P_{t,e} $ (in suitable orientation). 

Then for every  system $ \mathcal{P} $ of edge-disjoint $ T $-paths in $ 
G/\mathcal{F}$, there is a system $ \mathcal{P}' $ of edge-disjoint $ T $-paths in $ G $ such that for every $ P\in 
\mathcal{P} $ there is a $ P'\in \mathcal{P}' $ has the same endpoints as $ P $ and $ E(P')\supseteq E(P) $. Furthermore, this 
$ \mathcal{P}' $ can be chosen 
in such a 
way that it links $ t $ for every  $ t\in T $ for which $ \mathcal{P} $ links $ t $ 
and $ \mathcal{P}_t $ covers $ \delta_G(t) $.
\end{observation}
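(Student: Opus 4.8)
The plan is to lift each member of $\mathcal{P}$ separately, by gluing to it the two boundary-linked subpaths at its two ends, and then to check that nothing clashes. Before that I would record two elementary facts about the contraction. Since the sets in $\mathcal{F}$ are pairwise disjoint and $t\in X_t$, we have $X_t\cap T=\{t\}$ for every $t\in T$; hence the terminal set of $G/\mathcal{F}$ is again $T$, and unwinding the definition of $G/\mathcal{F}$ yields $\delta_{G/\mathcal{F}}(t)=\delta_G(X_t)$ for every $t\in T$. Thus an edge incident with $t$ in $G/\mathcal{F}$ is precisely an edge of $G$ with exactly one end-vertex in $X_t$, whereas the interior edges of a $T$-path of $G/\mathcal{F}$ are edges of $G$ with both end-vertices outside $\bigcup\mathcal{F}$ (and since $T\subseteq\bigcup\mathcal{F}$, these are non-terminals of $G$).

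Fix $P\in\mathcal{P}$, let $t_0,t_1$ be its terminal end-vertices and $e_0,e_1$ its first and last edges (with $e_0=e_1$ precisely when $P$ is a single edge); then $e_i\in\delta_G(X_{t_i})$, and, writing $a_i$ for the end-vertex of $e_i$ lying in $X_{t_i}$, the edges of $P$ read off in $G$ form a path $\widehat{P}$ from $a_0$ to $a_1$ that meets $X_{t_0}$ exactly in $a_0$ and $X_{t_1}$ exactly in $a_1$. Using the boundary-linked property in the form that $P_{t_i,e_i}$ is $e_i$ followed by a path inside $X_{t_i}$ ending at $t_i$, the path $R_i:=P_{t_i,e_i}-e_i$ runs from $a_i$ to $t_i$ with all of its edges spanned by $X_{t_i}$; in particular $R_i$ uses no edge of $G/\mathcal{F}$, and $R_0,R_1$ are edge- and vertex-disjoint. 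As $\widehat{P},R_0,R_1$ are paths with $R_0$ meeting $\widehat{P}$ only in $a_0$ and $\widehat{P}$ meeting $R_1$ only in $a_1$, and with every interior vertex of the concatenation a non-terminal, $P':=R_0\cup\widehat{P}\cup R_1$ is a $T$-path from $t_0$ to $t_1$ with $E(P')\supseteq E(P)$. I let $\mathcal{P}'$ be the family of all these $P'$ (when several paths are around I decorate the above objects with them, e.g.\ $\widehat{P}$, $R^{P}_i$, $e_i(P)$, $t_i(P)$).

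It remains to verify edge-disjointness of $\mathcal{P}'$ and the linking conclusion. For $P\neq Q$ in $\mathcal{P}$ we have $E(P')=E(P)\cup E(R^{P}_0)\cup E(R^{P}_1)$, where $E(P)\subseteq E(G/\mathcal{F})$ while $E(R^{P}_i)$ consists of edges spanned by $X_{t_i(P)}$, hence is disjoint from $E(G/\mathcal{F})$; likewise for $Q$. So a common edge of $P'$ and $Q'$ would be a common edge of some $R^{P}_i$ and $R^{Q}_j$ with $t_i(P)=t_j(Q)=:t$, i.e.\ a common edge of the two members $P_{t,e_i(P)}$ and $P_{t,e_j(Q)}$ of the edge-disjoint system $\mathcal{P}_t$; but these are distinct, since $e_i(P)=e_j(Q)$ would mean $P$ and $Q$ share the edge $e_i(P)$. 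Finally, if $\mathcal{P}$ links $t$ and $\mathcal{P}_t$ covers $\delta_G(t)$, then, as $\delta_{G/\mathcal{F}}(t)=\delta_G(X_t)$, every $e\in\delta_G(X_t)$ is the first or last edge of a (by edge-disjointness unique) path of $\mathcal{P}$ ending at $t$, whose lift contains both $e$ and $P_{t,e}-e$; hence $E(\mathcal{P}')\supseteq\bigcup_{e\in\delta_G(X_t)}E(P_{t,e})\supseteq\delta_G(t)$, i.e.\ $\mathcal{P}'$ links $t$. The only step that really needs care is showing that $P'$ is a genuine path and that the pieces $R^{P}_i,R^{Q}_j$ cannot clash across different terminals: both hinge on the boundary-linked subpath $P_{t,e}-e$ staying inside $X_t$, and with that in hand the degenerate cases (a trivial $R_i$ when $e_i$ already meets $t_i$, or $\widehat{P}$ a single edge) are routine to absorb.
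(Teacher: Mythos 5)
Your construction is exactly the one the paper uses (and illustrates with its figure): replace each $P\in\mathcal{P}$ by $P_{t_0,e_0}\cup P\cup P_{t_1,e_1}$, where $e_i$ is the edge of $P$ at the terminal $t_i$. The paper leaves the verification entirely to the picture, whereas you spell out the needed facts — that $\delta_{G/\mathcal{F}}(t)=\delta_G(X_t)$, that $P_{t,e}-e$ lives inside $X_t$ (the reading of ``boundary-linked'' the figure and all applications presuppose, and without which the concatenations could fail to be paths or to be pairwise edge-disjoint), and the resulting disjointness and linking claims — so this is a correct, fully fleshed-out version of the same argument.
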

\begin{proof}
For a $ P\in \mathcal{P} $ we obtain $ P' $ by uniting $ P $ and the paths $ P_{t_i, e_{i}} $ for $ i\in \{ 0,1 \} $ where $ t_0 
$ and $ t_1 $ are the endpoints of $ P $ and $ e_i $ is the unique edge of $ P $ incident with $ t_i $ in $ G/\mathcal{F} $ 
(see Figure \ref{fig: lif up}). 

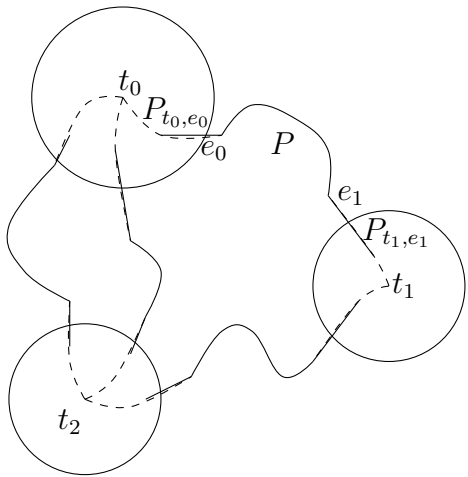
\begin{figure}[h]
\centering

\begin{tikzpicture}

\draw  (-2,0.5) node (v1) {} ellipse (1 and 1);
\draw  (-1.5,4.5) node (v2) {} ellipse (1.2 and 1.2);
\draw  (2,2) node (v3) {} ellipse (1 and 1);
\draw[dashed]  plot[smooth, tension=.7] coordinates {(v1) (-2.2,1) (-2.2,1.8)};
\draw[dashed]  plot[smooth, tension=.7] coordinates {(v1) (-1.6,0.8) (-1.2,1.6)};
\draw[dashed]  plot[smooth, tension=.7] coordinates {(v1) (-1.4,0.4) (-0.6,0.8)};
\draw[dashed]  plot[smooth, tension=.7] coordinates {(v2) (-2,4.4) (-2.4,3.6)};
\draw[dashed]  plot[smooth, tension=.7] coordinates {(v2) (-1.6,3.8) (-1.4,2.6)};
\draw[dashed]  plot[smooth, tension=.7] coordinates {(v2) (-1,4) (-0.2,4)};
\draw[dashed]  plot[smooth, tension=.7] coordinates {(v3) (1.8,2.4) (1.2,3.2)};
\draw[dashed] plot[smooth, tension=.7] coordinates {(v3) (1.6,1.8) (1,1)};

\draw  plot[smooth, tension=.7] coordinates {(-2.2,1.8) (-2.8,2.2) (-3,2.8) (-2.4,3.6)};
\draw  plot[smooth, tension=.7] coordinates {(-1.2,1.6) (-1,2.2) (-1.4,2.6)};
\draw  plot[smooth, tension=.7] coordinates {(-0.6,0.8) (-0.2,1.4) (0.2,1.4) (0.6,0.8) (1,1)};
\draw  plot[smooth, tension=.7] coordinates {(1.2,3.2) (1.2,3.8) (0.8,4.2) (0.2,4.4) (-0.2,4)};


\node at (-1.4,4.7) {$t_0$};
\node at (2.2,2) {$t_1$};

\node at (-2.2,0.2) {$t_2$};
\node at (1.5,3.2) {$e_1$};
\node at (-0.3,3.8) {$e_0$};
\node at (0.6,3.9) {$P$};
\node at (2.1,2.7) {$P_{t_1, e_1}$};
\node at (-0.8,4.3) {$P_{t_0, e_0}$};

\draw  plot[smooth, tension=.7] coordinates {(-2.2,1.8) (-2.2,1.2)};
\draw  plot[smooth, tension=.7] coordinates {(-1.2,1.6) (-1.4,1.1)};
\draw  plot[smooth, tension=.7] coordinates {(-0.6,0.8) (-1.2,0.5)};

\draw  plot[smooth, tension=.7] coordinates {(-2.4,3.6) (-2.2,4)};

\draw  plot[smooth, tension=.7] coordinates {(-1.4,2.6) (-1.6,3.8)};
\draw  plot[smooth, tension=.7] coordinates {(-0.2,4) (-1,4)};
\draw  plot[smooth, tension=.7] coordinates {(1.2,3.2) (1.8,2.4)};
\draw  plot[smooth, tension=.7] coordinates {(1,1)};
\draw  plot[smooth, tension=.7] coordinates {(1,1) (1.6,1.8)};
\end{tikzpicture}
\caption{The Lifting lemma (Observation \ref{obs: lift up})} \label{fig: lif up}
\end{figure}
\end{proof}

\begin{lemma}\label{lem: joker}
For every $ T'\subseteq T$ there exists a family $ \mathcal{F}=\{ X_t:\ t\in T \} $ of disjoint 
vertex sets with $t\in  X_t $  such that $ X_t=\{ t \} $ for $ t\in T\setminus T' $ and for each $ s\in T' $: $ \delta(X_s) $ is 
the $ \trianglelefteq $-largest 
Erdős-Menger  
$ X_s (\bigcup_{t \in T-s}X_t)$-cut. Every such $ \mathcal{F} $ has the 
following properties:
\begin{enumerate}[label=(\roman*)]
\item\label{item: joker links} For each $ t\in T $ for which $ t $ is linked in $ G $ there is a 
system $ \mathcal{P}_{t}=\{ P_{t,e}:\ e\in \delta_G(X_t) \} $ of edge-disjoint paths covering $ \delta_G(t) $ where $ e $ 
is 
the first edge and $ t $ is the last vertex of $ P_{t,e} $ (in suitable orientation);
\item\label{item: joker EM} For every $ t \in T' $: $ \delta_{G/\mathcal{F}}(t) $ is the unique Erdős-Menger $ t$-cut in $ 
G/\mathcal{F} $;
\item\label{item: joker solution}  If $ G/\mathcal{F} $ has a perfect linkage then $ G $ has a system $ \mathcal{P} $ of $ T 
$-paths with the properties described in Theorem 
\ref{thm: LCh inf}. 
\end{enumerate}
\end{lemma}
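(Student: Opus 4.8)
The plan is to exhibit one explicit family $\mathcal F$, verify it has the stated defining property, and read off (i)--(iii) from it.

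\textbf{Construction of $\mathcal F$.} Fix an enumeration $T'=\{s_\alpha:\alpha<\vartheta\}$ and build the sets $X_{s_\alpha}$ by transfinite recursion, keeping $X_t=\{t\}$ for every terminal not yet handled and taking unions at limit stages. Given $X_{s_\beta}$ for $\beta<\alpha$, form the grapht $H_\alpha:=G/\mathcal F_\alpha$ obtained by contracting the family built so far; by Theorem~\ref{t: Inf Menger} and the completeness of the lattice of Erdős--Menger cuts \cite{joo2019complete}, $H_\alpha$ has a $\trianglelefteq$-largest Erdős--Menger $s_\alpha(T-s_\alpha)$-cut $C_\alpha$, and I set $X_{s_\alpha}$ to be the preimage of its $s_\alpha$-side $Y_\alpha$ under $G\to H_\alpha$. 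Since $Y_\alpha$ contains no terminal other than $s_\alpha$ it contains no vertex contracted earlier, so in fact $X_{s_\alpha}=Y_\alpha$, the family stays pairwise disjoint, and $G[X_{s_\alpha}]=H_\alpha[Y_\alpha]$, $\delta_G(X_{s_\alpha})=\delta_{H_\alpha}(Y_\alpha)=C_\alpha$. The genuine work is to check that after the recursion $\delta_G(X_s)$ really is the $\trianglelefteq$-largest Erdős--Menger $X_s(\bigcup_{t\in T-s}X_t)$-cut \emph{simultaneously} for every $s\in T'$: for the last $s$ treated this is immediate, and for an $s$ treated earlier one verifies that the later enlargements (each disjoint from $X_s$, hence from the $X_s$-side of $\delta_G(X_s)$) keep $\delta_G(X_s)$ an $X_s(\bigcup_tX_t)$-cut, keep it Erdős--Menger (push its certifying path system through the extra contractions), and keep it $\trianglelefteq$-maximal (enlarging the target side only deletes members from the lattice). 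I expect this monotonicity bookkeeping to be the main obstacle; it is the kind of argument that recurs around ``largest'' Menger cuts.

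\textbf{Property (ii).} Pushing a certificate of $\delta_G(X_s)$ (viewed as an $X_s(\bigcup_tX_t)$-cut) through the contraction and truncating each path at its first terminal shows $\delta_{G/\mathcal F}(s)=\delta_G(X_s)$ is an Erdős--Menger $s$-cut of $G/\mathcal F$. Conversely, any Erdős--Menger $s$-cut $C$ of $G/\mathcal F$ pulls back, together with its certificate, to an Erdős--Menger $X_s(\bigcup_tX_t)$-cut of $G$ whose $X_s$-side contains $X_s$; the defining property gives $C\trianglelefteq\delta_G(X_s)$, while the inclusion of $X_s$-sides gives $\delta_G(X_s)\trianglelefteq C$ (along any $X_s(\bigcup_tX_t)$-path the cut $\delta_G(X_s)$ is met at the very first edge), so $C=\delta_{G/\mathcal F}(s)$.

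\textbf{Property (i).} For $t\notin T'$ take $P_{t,e}:=e$. For $t\in T'$, recall $\delta_G(X_t)=C_\alpha=\delta_{H_\alpha}(Y_\alpha)$ is the $\trianglelefteq$-largest Erdős--Menger $t(T-t)$-cut of $H_\alpha$, hence is certified in $H_\alpha$ by a system $\mathcal S$ of edge-disjoint $t(T-t)$-paths all emanating from $t$; each $S\in\mathcal S$ crosses $\delta_{H_\alpha}(Y_\alpha)$ exactly once, so its portion inside $Y_\alpha=X_t$ is a path of $G[X_t]$ from $t$ to the $X_t$-endpoint of its crossing edge $e_S$. Reversing it and prefixing $e_S$ produces $P_{t,e_S}$, and as $S$ runs over $\mathcal S$ the $e_S$ run bijectively over $\delta_G(X_t)$; this is the required boundary-linked system $\mathcal P_t$. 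When moreover $t$ is linked in $G$, note that at stage $\alpha$ no edge at $t=s_\alpha$ becomes a loop (the vertex $t$ is not yet fattened), so $d_{H_\alpha}(t)=d_G(t)$ and hence $|\delta_G(X_t)|=\lambda_{H_\alpha}(t,T-t)\le d_G(t)$; on the other hand $\delta_G(X_t)$ is a $t$-cut of $G$, so linkability of $t$ forces $|\delta_G(X_t)|\ge\lambda_G(t,T-t)=d_G(t)$. Thus $|\mathcal P_t|=d_G(t)$, the last edges of its members are $d_G(t)$ distinct elements of $\delta_G(t)$, and so they exhaust $\delta_G(t)$.

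\textbf{Property (iii).} Suppose $\mathcal P$ is a perfect linkage of $G/\mathcal F$. For every $t\in T$ the subfamily of $\mathcal P$ whose members end at $t$ covers $\delta_{G/\mathcal F}(t)$ with exactly one edge per path (each such path is a $T$-path from $t$), so $\delta_{G/\mathcal F}(t)=\delta_G(X_t)$ is orthogonal to it. Apply the Lifting lemma (Observation~\ref{obs: lift up}) using the boundary-linked systems of~(i) (the trivial ones for $s\notin T'$) to get edge-disjoint $T$-paths $\mathcal P'$ of $G$, one per member of $\mathcal P$ and with the same endpoints, each lifted path running inside $X_t$ via $P_{t,e}$ after it last enters $X_t$. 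Consequently a member $P'\in\mathcal P'$ ending at $t$ meets $\delta_G(X_t)$ in exactly one edge, namely the edge $e_P\in\delta_G(X_t)=\delta_{G/\mathcal F}(t)$ by which the underlying $P\in\mathcal P$ reached $t$ (its $X_t$-part and its extension inside another $X_s$ contribute no edge of $\delta_G(X_t)$), and these $e_P$ run bijectively over $\delta_G(X_t)$. Since $\delta_G(X_t)$ separates $X_t\ni t$ from $V\setminus X_t\supseteq T-t$, it is a $t$-cut of $G$; hence $\mathcal P'$ together with the cuts $\{\delta_G(X_t):t\in T\}$ is exactly a system of the kind Theorem~\ref{thm: LCh inf} demands. (The lingering subtlety, shared with~(i), is that the blobs $X_t$ must be boundary-linked for the Lifting lemma to apply; this is where the explicit construction above, rather than the bare defining property, does the work, and reconciling this with the phrasing ``every such $\mathcal F$'' amounts to the equivalence of the defining property with~(ii).)
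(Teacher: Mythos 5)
Your construction of $\mathcal F$ and the arguments for (ii) and (iii) track the paper's, but (i) has a genuine gap. You conclude that $\mathcal P_t$ covers $\delta_G(t)$ by counting: $|\mathcal P_t| = |\delta_G(X_t)| = d_G(t)$, so the $d_G(t)$ distinct last edges of its members ``exhaust $\delta_G(t)$.'' This final inference is false when $d_G(t)$ is an infinite cardinal --- $\aleph_0$ distinct elements of an $\aleph_0$-sized set need not be all of it --- and infinite degrees are the whole point of the paper. (As a side remark, your one-line justification of the ``simultaneous maximality'' of the cuts, that ``enlarging the target side only deletes members from the lattice,'' is also shaky: enlarging the target can make \emph{new} cuts Erdős--Menger, since the certifying paths are then permitted to terminate sooner. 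But the paper leaves this point implicit as well, so it is not the distinguishing error.)

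The paper's proof of (i) sidesteps counting entirely. Contract $V\setminus X_t$ to a fresh terminal $s$; in the resulting two-terminal grapht $G'$, the Erdős--Menger certificate for $\delta_G(X_t)$ (restricted inside $X_t$) links $s$, and the assumed linkage of $t$ in $G$ (truncated at its first exit from $X_t$) links $t$. Theorem~\ref{t: Pym} then merges these two linkages into a single system of edge-disjoint $st$-paths covering both $\delta_{G'}(s)=\delta_G(X_t)$ and $\delta_{G'}(t)=\delta_G(t)$, which is exactly the required $\mathcal P_t$. The missing idea in your write-up is this merge step: in the infinite setting the finite cardinality pinch must be replaced by the Pym/linking theorem.
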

\begin{proof}
To show the existence, let $ T'=\{ t_\alpha:\ \alpha<\kappa \} $. If $ X_\beta $  is already defined for $ \beta<\alpha $, then 
we contract $ X_\beta $ 
to $ t_\beta $ for each $ \beta<\alpha $ and define $ X_\alpha $ as the $ t_\alpha $-side of the $ \trianglelefteq $-largest 
Erdős-Menger  $ t_\alpha $-cut of the resulting grapht.

To show \ref{item: joker links}, observe that whenever $ \delta_G(X_t) $ is an Erdős-Menger $ t $-cut with $ t\in X_t $, 
then $ X_t $ is boundary-linked (in the sense of Observation \ref{obs: lift up}).  If in addition $ t $ is 
linked in $ G $, then we claim that
Theorem \ref{t: Pym} allows us to choose the path-system $ 
\mathcal{P}_{t} $ witnessing being boundary-linked in such a way that it covers $ \delta_G(t) $. Indeed, contract $ 
V\setminus X_t $ to an $ s\in T\setminus V 
$. Then in the resulting grapht $ G' $ both $ s $ and $ t $ are linked, thus by Theorem \ref{t: Pym} one can link both by a 
single path-system.

Property \ref{item: joker EM} holds because any further Erdős-Menger $ t$-cut in $ G/\mathcal{F} $  would be an 
Erdős-Menger  $ X_t (\bigcup_{t' \in T-t}X_{t'})$-cut  in $ G $ that is strictly $ \trianglelefteq $-larger than the largest one. 
Finally, \ref{item: joker solution} follows by lifting up a perfect linkage of $ 
G/\mathcal{F} $ to $ G $ in the 
sense of Lemma \ref{obs: lift up} because then the cuts $ \delta_G(X_t) $ for $ t\in T $ are as demanded by Theorem 
\ref{thm: LCh inf}.
\end{proof}
\begin{corollary}
Theorem \ref{thm: LCh inf} is implied by Theorem \ref{thm: linkage}.
\end{corollary}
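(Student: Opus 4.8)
The plan is to deduce the statement from Theorem \ref{thm: linkage} by passing to the contracted grapht supplied by Lemma \ref{lem: joker}. So let $G=(V,E,I,T)$ be an arbitrary inner Eulerian grapht. First I would apply Lemma \ref{lem: joker} with the \emph{maximal} choice $T'=T$, obtaining a family $\mathcal{F}=\{X_t:\ t\in T\}$ of pairwise disjoint vertex sets with $t\in X_t$, where each $\delta_G(X_s)$ is the $\trianglelefteq$-largest Erdős-Menger $X_s(\bigcup_{t\in T-s}X_t)$-cut. By part \ref{item: joker solution} of that lemma it then suffices to produce a perfect linkage of $G/\mathcal{F}$, and by Theorem \ref{thm: linkage} it is enough to verify that $G/\mathcal{F}$ is inner Eulerian and satisfies the linkability condition.

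For inner-Eulerianness I would argue that contracting vertex sets that meet $T$ in at most one terminal leaves the degree of every $T$-avoiding set unchanged. Since $T\subseteq\bigcup\mathcal{F}$, any $Y\subseteq V(G/\mathcal{F})\setminus T$ is in fact a subset of $V(G)\setminus\bigcup\mathcal{F}$, hence of $V(G)\setminus T$; as $Y$ is disjoint from every member of $\mathcal{F}$, no edge of $\delta_G(Y)$ is deleted in forming $G/\mathcal{F}$ and an edge has exactly one endpoint in $Y$ in $G$ precisely when it does in $G/\mathcal{F}$. Thus $d_{G/\mathcal{F}}(Y)=d_G(Y)$, which is not an odd natural number because $G$ is inner Eulerian, so $G/\mathcal{F}$ is inner Eulerian.

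For the linkability condition I would invoke property \ref{item: joker EM}: for every $t\in T=T'$ the edge set $\delta_{G/\mathcal{F}}(t)$ is an Erdős-Menger $t$-cut in $G/\mathcal{F}$, so some system $\mathcal{Q}$ of edge-disjoint $t(T-t)$-paths is orthogonal to it. Each such path is a $T$-path meeting $\delta_{G/\mathcal{F}}(t)$ in exactly its unique edge at $t$, so the edge of $\delta_{G/\mathcal{F}}(t)$ chosen from it is forced to be that edge; hence $\mathcal{Q}$ covers $\delta_{G/\mathcal{F}}(t)$, i.e.\ it links $t$. Therefore every terminal is linked in $G/\mathcal{F}$, Theorem \ref{thm: linkage} yields a perfect linkage of $G/\mathcal{F}$, and lifting it via part \ref{item: joker solution} of Lemma \ref{lem: joker} produces the required system $\mathcal{P}$ in $G$.

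The one point that is not merely bookkeeping is that $G$ itself need not satisfy the linkability condition — a terminal may simply fail to be linked in $G$ — so the Linkage theorem cannot be applied to $G$ directly. Using Lemma \ref{lem: joker} with $T'=T$ (rather than with the set of already-linked terminals) is exactly what repairs this: contracting each terminal together with the inner side of its $\trianglelefteq$-largest Erdős-Menger cut makes every terminal linked, while keeping inner-Eulerianness and, through the Lifting lemma, the cuts demanded by Theorem \ref{thm: LCh inf}. I expect the conceptual crux to be the use of property \ref{item: joker EM} that $\delta_{G/\mathcal{F}}(t)$ is a genuine \emph{Erdős-Menger} cut; the inner-Eulerian verification is routine.
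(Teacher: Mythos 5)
Your proposal is correct and follows essentially the same route the paper intends: apply Lemma \ref{lem: joker} with $T'=T$, verify that $G/\mathcal{F}$ is inner Eulerian and satisfies the linkability condition (the latter via property \ref{item: joker EM}), invoke Theorem \ref{thm: linkage} to get a perfect linkage of $G/\mathcal{F}$, and then use \ref{item: joker solution} to lift it to $G$. The paper compresses all of this into the single sentence ``It follows directly from \ref{item: joker solution} of Lemma \ref{lem: joker},'' so your write-up is simply a careful unpacking of what that citation silently assumes, and your two verifications (the degree-preservation argument for inner Eulerianness and the orthogonal-path-system argument for linkability) are both sound.
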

\begin{proof}
It follows directly from \ref{item: joker solution} of Lemma \ref{lem: joker}.
\end{proof}
\begin{remark}
The proof also shows that the family of cuts in Theorem \ref{thm: LCh inf} can be chosen to be nested.
\end{remark}

\section{Reduction of the Linkage theorem to countable graphts}\label{sec: reduc countable}
In this section, we reduce the Linkage theorem \ref{thm: linkage} to its special case where the edge set of the grapht is 
countable.

\subsection{Basic elementarity arguments}
\begin{observation}\label{obs: prev inner Euler}
If $ M  $ is an elementary submodel and $ G\in M $ is an inner Eulerian grapht, then the grafts $ G \cap M $ and $ G 
\setminus M $ are also inner 
Eulerian. 
\end{observation}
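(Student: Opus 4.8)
The plan is to reduce everything to the partition characterisation from Corollary \ref{cor: inner Eulerian grapht partionon}: a grapht is inner Eulerian exactly when its edge set is the disjoint union of edge sets of cycles and $T$-paths. Since $G\in M$ is inner Eulerian, the statement ``$G$ admits an edge-partition into cycles and $T$-paths'' holds and is expressible with parameter $G$, so by elementarity there is such an edge-partition $\mathcal{D}$ with $\mathcal{D}\in M$. The whole argument then hinges on one dichotomy for the members of $\mathcal{D}$: every $D\in\mathcal{D}$ satisfies either $E(D)\subseteq M$ or $E(D)\cap M=\emptyset$.

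To prove the dichotomy, suppose some $D\in\mathcal{D}$ has an edge $e\in E(D)\cap M$. Being the unique member of $\mathcal{D}$ through $e$ is a property of $D$ expressible from the parameters $\mathcal{D},e\in M$, so elementarity gives $D\in M$. As $D$ is a cycle or a $T$-path it is finite, so $E(D)\in M$ is a finite set, and a finite set that is an element of $M$ has all of its elements in $M$ (standard, using $\omega\subseteq M$); hence $E(D)\subseteq M$. Contrapositively, $E(D)\not\subseteq M$ forces $E(D)\cap M=\emptyset$, as claimed.

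Given the dichotomy I would put $\mathcal{D}_0:=\{\,D\in\mathcal{D}:E(D)\subseteq M\,\}$ and $\mathcal{D}_1:=\{\,D\in\mathcal{D}:E(D)\cap M=\emptyset\,\}$. For any $e\in E\cap M$ its unique $D\in\mathcal{D}$ lies in $\mathcal{D}_0$ (it has the edge $e\in M$, so the dichotomy rules out $E(D)\cap M=\emptyset$), so the members of $\mathcal{D}_0$ edge-partition $E\cap M$; symmetrically $\mathcal{D}_1$ edge-partitions $E\setminus M$. Since $G\cap M$ and $G\setminus M$ have the same vertex set and the same terminal set $T$ as $G$ and inherit the incidence function, each cycle or $T$-path of $G$ lying in $\mathcal{D}_0$ (resp.\ $\mathcal{D}_1$) is still a cycle or $T$-path of $G\cap M$ (resp.\ $G\setminus M$). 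Applying the ``if'' direction of Corollary \ref{cor: inner Eulerian grapht partionon} to these two edge-partitions yields that $G\cap M$ and $G\setminus M$ are inner Eulerian.

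I do not anticipate a real obstacle: the content is entirely the dichotomy above, and its only delicate point is the routine elementarity fact that a finite member of $M$ has all elements in $M$; everything else is bookkeeping (that restricting the edge set of $G$ to $M$ or away from $M$ changes neither the vertices nor $T$, so cycles and $T$-paths are preserved). A parity argument in the spirit of the finite proof is deliberately avoided, since the relevant degrees $d(X)$ may be infinite.
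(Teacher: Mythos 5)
Your proposal is correct and follows essentially the same route as the paper: fix a cycle/$T$-path edge-partition of $G$ inside $M$ via Corollary~\ref{cor: inner Eulerian grapht partionon} and elementarity, use definability of the unique member through a given edge plus finiteness to get the ``all in $M$ or all out'' dichotomy, and then split the partition accordingly. The only cosmetic difference is that you make the dichotomy and the appeal to the standard finite-set-in-$M$ fact explicit, whereas the paper states them more tersely.
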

\begin{proof}
By Corollary \ref{cor: inner Eulerian grapht partionon} we can fix an edge-partition $ \mathcal{A}\in M $ of $ G $ into  
cycles and $ T $-paths. Then if $ e \in  E\cap M $, 
then the unique $ H_e\in 
\mathcal{A} $ that contains $ e $ is definable from the parameters $ \mathcal{A} $ and $ e $, thus $ H_e\in M $. Since $ 
H_e $ 
is finite, this implies $ 
E(H_e)\subseteq M 
$. This shows that there is no $ H \in \mathcal{A} $ for which both $ E(H) \cap M $ and $ E(H) \setminus M $ are 
nonempty. But then $ 
\mathcal{A}\cap M $ and $ \mathcal{A}\setminus M $ witness respectively that $ G \cap M $ and $ G \setminus M $ are 
inner Eulerian.
\end{proof}

\begin{observation}\label{obs: in-prev linkability}
If $ M  $ is an elementary submodel and $ G\in M $ is a grapht satisfying the linkability condition, then the graft $ G \cap M 
$ also satisfies the 
linkability condition.  
\end{observation}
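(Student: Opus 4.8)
The plan is to handle the case $t\in T\setminus M$ by triviality and the case $t\in T\cap M$ by pulling a linkage of $t$ down into $M$ via elementarity, in the same spirit as the proof of Observation~\ref{obs: prev inner Euler}. First I would note that every edge $e$ of $G\cap M$ lies in $M$, and since $I\in M$ the two-element set $I(e)$ belongs to $M$, hence (being finite) is a subset of $M$; thus every endpoint of every edge of $G\cap M$ is in $M$. Consequently, for $t\in T\setminus M$ we get $\delta_{G\cap M}(t)=\emptyset$, and the empty system of $T$-paths links $t$ in $G\cap M$. So only $t\in T\cap M$ needs work.

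Fix such a $t$. Since $G\in M$ satisfies the linkability condition, elementarity provides a set $\mathcal{P}_t\in M$ that $M$ regards as a linkage of $t$ in $G$; here I use that the predicates needed to express ``$\mathcal{P}$ is a system of edge-disjoint $T$-paths of $G$'' and ``$\mathcal{P}$ links $t$'' (and the auxiliary notions unwinding them) are among the fixed finite family $\Sigma$ of formulas. Every $P\in\mathcal{P}_t\cap M$ is a \emph{finite} object belonging to $M$, and a finite element of an elementary submodel is a subset of it; hence $E(P)\subseteq E\cap M$ and $V(P)\subseteq M$, so each such $P$ is genuinely a $T$-path of $G\cap M$, and the members of $\mathcal{P}_t\cap M$ remain pairwise edge-disjoint.

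It then remains to relativize the covering condition. The statement ``$\mathcal{P}_t$ covers $\delta_G(t)$'', relativized to $M$, asserts exactly that every $e\in\delta_G(t)\cap M$ lies on some member of $\mathcal{P}_t\cap M$; since $\delta_G(t)\cap M=\delta_{G\cap M}(t)$, this says that $\mathcal{P}_t\cap M$ is a system of edge-disjoint $T$-paths of $G\cap M$ linking $t$. As $t\in T$ was arbitrary, $G\cap M$ satisfies the linkability condition. The only point requiring care is the elementarity bookkeeping --- verifying that the finitely many formulas actually invoked belong to $\Sigma$ and that their relativizations say precisely what is claimed above --- but once $\Sigma$ has been fixed generously, as it is throughout the paper, this is routine and uses no set theory beyond the basic properties of elementary submodels.
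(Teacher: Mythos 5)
Your proof is correct and takes essentially the same approach as the paper's: you split into $t\in T\setminus M$ (trivial, since $\delta_{G\cap M}(t)=\emptyset$) and $t\in T\cap M$ (pull a linkage $\mathcal{P}_t\in M$ down into $M$ and show $\mathcal{P}_t\cap M$ still covers $\delta_{G\cap M}(t)$). The only cosmetic difference is that you obtain the covering by relativizing the sentence ``$\mathcal{P}_t$ covers $\delta_G(t)$'' to $M$, whereas the paper phrases the same point as a definability argument (for each $e\in\delta_G(t)\cap M$, the unique $P_e\in\mathcal{P}_t$ through $e$ is definable from $e,\mathcal{P}_t\in M$, hence $P_e\in M$ and $E(P_e)\subseteq M$); these are two equivalent ways of invoking elementarity and neither buys anything over the other.
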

\begin{proof}
For $ t\in T \setminus M $ we have $ \delta_{G\cap M}(t)=\emptyset $, thus $ t $ is linked in $ G\cap M $. For  $ t\in T\cap 
M $ let  $ 
\mathcal{P}_t \in M $ be a path-system that links $ t $ in $ G $. 
For 
each $ e\in \delta_G(t)\cap M=\delta_{G \cap M}(t) $, 
the unique $ P_e \in \mathcal{P}_t $ through $ e $ is definable from the parameters $ \mathcal{P}_t $ and $ e $, therefore $ 
P_e\in M $ and hence $ E(P_e)\subseteq M $. Thus $ \mathcal{P}_t\cap M $ links 
$ t $ in $ G\cap M $.
\end{proof}

\begin{observation}\label{obs: out-prev linkability partially}
If $ M  $ is an elementary submodel and $ G\in M $ is a grapht satisfying the linkability condition, then  every $ t\in T \cap 
M $ is linked in $ G\setminus M $. 
\end{observation}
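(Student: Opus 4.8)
The plan is to reuse the elementarity bookkeeping from the proof of Observation~\ref{obs: in-prev linkability}, except that now we keep the paths avoiding $ M $ entirely instead of those contained in $ M $. Fix $ t\in T\cap M $. Since $ G $ satisfies the linkability condition, $ t $ is linked in $ G $, and as ``$ t $ is linked in $ G $'' is expressible with the parameters $ G,t\in M $, elementarity provides a path-system $ \mathcal{P}_t\in M $ that links $ t $ in $ G $, i.e.\ covers $ \delta_G(t) $. Set $ \mathcal{Q}:=\{ P\in\mathcal{P}_t:\ E(P)\cap M=\emptyset \} $; I claim $ \mathcal{Q} $ links $ t $ in $ G\setminus M $.

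First, the members of $ \mathcal{Q} $ are pairwise edge-disjoint as a subfamily of $ \mathcal{P}_t $, and each is a $ T $-path of $ G $ all of whose edges survive in $ G\setminus M $, hence a $ T $-path of $ G\setminus M $. So it remains to verify that $ \mathcal{Q} $ covers $ \delta_{G\setminus M}(t)=\delta_G(t)\setminus M $. Take any $ e\in\delta_G(t)\setminus M $. Since $ \mathcal{P}_t $ covers $ \delta_G(t) $ and consists of edge-disjoint paths, there is a unique $ P_e\in\mathcal{P}_t $ with $ e\in E(P_e) $, and it suffices to show $ E(P_e)\cap M=\emptyset $, i.e.\ $ P_e\in\mathcal{Q} $. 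If instead $ f\in E(P_e)\cap M $ for some $ f $, then $ P_e $ is the unique element of $ \mathcal{P}_t $ through $ f $, hence definable from the parameters $ \mathcal{P}_t,f\in M $, so $ P_e\in M $; being finite, this forces $ E(P_e)\subseteq M $, contradicting $ e\in E(P_e)\setminus M $. Thus $ P_e\in\mathcal{Q} $, so $ \mathcal{Q} $ links $ t $ in $ G\setminus M $, as desired.

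The argument uses nothing beyond the linkability condition and routine elementarity; inner Eulerianness is irrelevant here. The one point to be careful about is that ``the path of $ \mathcal{P}_t $ through $ f $'' is well defined — this is exactly the edge-disjointness of $ \mathcal{P}_t $ — so I do not expect any genuine obstacle.
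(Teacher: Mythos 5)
Your proof is correct and matches the paper's almost verbatim: both take $\mathcal{P}_t\in M$ by elementarity, observe that any $P_e\in\mathcal{P}_t$ through an $e\in\delta_G(t)\setminus M$ must have $E(P_e)\cap M=\emptyset$ (since one edge $f\in M$ would make $P_e$ definable from $\mathcal{P}_t,f\in M$, hence $P_e\in M$ and $E(P_e)\subseteq M$), and conclude that the subfamily of $\mathcal{P}_t$ avoiding $M$ links $t$ in $G\setminus M$. Your $\mathcal{Q}$ is exactly the paper's $\mathcal{P}_t\setminus M$, since the definability argument shows each path in $\mathcal{P}_t$ has either all or none of its edges in $M$.
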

\begin{proof}
Let $ t \in T \cap M $ be fixed and take a system $ \mathcal{P}_t \in M $ that links $ t $. 
We 
claim that for $ e 
\in 
\delta(t)\setminus M $ the unique $ P_e\in \mathcal{P}_t $ through $ e $ cannot have any edges in $ M $. Indeed,  if $ f\in 
E(P_e)\cap M $, then $ 
P_e $ is 
definable from the parameters $ f $ and $ \mathcal{P}_t $, thus $ P_e\in M $ but then $e\in E(P_e)\subseteq M $ which 
contradicts the choice of $ 
e $. 
Therefore the path-system $ \mathcal{P}_t\setminus M $ links $ t $ in $ G\setminus 
M $.
\end{proof}

The linkability of a $ t\in T \setminus M $ in $ G\setminus M $ will take more effort, we handle it 
by a 
``workaround'' in 
the following subsection.

\subsection{Stationarily many violations cannot occur}
\begin{lemma}\label{lem: no stat error}
Suppose that $ \kappa  $ is an uncountable regular cardinal,  $ \left\langle M_\alpha:\ \alpha<\kappa  \right\rangle $ is an 
increasing, 
continuous sequence of elementary submodels with $ \left\langle M_\beta:\ \beta\leq\alpha  \right\rangle\in M_{\alpha+1} $ 
and $ \left|M_\alpha\right|<\kappa $ for each $ \alpha<\kappa $ and $ G=(V,E,I,T)\in M_0 $ is a grapht such that $ \delta(t) $ 
is the unique Erdős-Menger $ t $-cut for $ t\in T $. Then  
there is a club $ U $ of $ \kappa $ such that for  $ \alpha \in U $ the grapht $ G\setminus M_\alpha $ satisfies the linkability 
condition.
\end{lemma}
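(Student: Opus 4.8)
The plan is to show the \emph{bad set} $B:=\{\alpha<\kappa:\ G\setminus M_\alpha\text{ does not satisfy the linkability condition}\}$ is non-stationary, so that some club $U$ of $\kappa$ is disjoint from $B$ and hence witnesses the lemma; this will be done by contradiction using Fodor's lemma (Lemma~\ref{lem: Fodor}). So assume $B$ is stationary. As the limit ordinals below $\kappa$ form a club, $B_0:=\{\alpha\in B:\ \alpha\text{ limit}\}$ is stationary, and we work with it. Fix $\alpha\in B_0$. Since $G\in M_0\subseteq M_{\alpha+1}$ and $\langle M_\beta:\ \beta\le\alpha\rangle\in M_{\alpha+1}$, also $M_\alpha\in M_{\alpha+1}$, so the grapht $G\setminus M_\alpha$ --- being definable from $G$ and $M_\alpha$ --- lies in $M_{\alpha+1}$. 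As it genuinely fails the linkability condition, elementarity of $M_{\alpha+1}$ yields a terminal $t_\alpha\in T\cap M_{\alpha+1}$ not linked in $G\setminus M_\alpha$. By Observation~\ref{obs: out-prev linkability partially} every element of $T\cap M_\alpha$ \emph{is} linked in $G\setminus M_\alpha$, hence $t_\alpha\in M_{\alpha+1}\setminus M_\alpha$; consequently no edge of $\delta_G(t_\alpha)$ lies in $M_\alpha$, so $\delta_{G\setminus M_\alpha}(t_\alpha)=\delta_G(t_\alpha)$. The hypothesis that $\delta_G(t)$ is the \emph{unique} Erdős-Menger $t$-cut for each $t\in T$ in particular says $\delta_G(t)$ \emph{is} an Erdős-Menger $t$-cut, i.e.\ that $G$ satisfies the linkability condition; thus $t_\alpha$ is linked in $G$, and elementarity of $M_{\alpha+1}$ lets us fix a linkage $\mathcal{P}_\alpha\in M_{\alpha+1}$ of $t_\alpha$ in $G$.

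The central geometric step is to \emph{cut $\mathcal{P}_\alpha$ at the frontier of $M_\alpha$}: orient each $P\in\mathcal{P}_\alpha$ from $t_\alpha$ towards its other endpoint and let $R_P$ be the initial segment of $P$ stopping just before the first edge of $P$ lying in $M_\alpha$ (and $R_P:=P$ if $P$ avoids $M_\alpha$ entirely). Since an edge of $M_\alpha$ has both endpoints in $M_\alpha$ while $t_\alpha\notin M_\alpha$, the paths $R_P$ are edge-disjoint, live in $G\setminus M_\alpha$, all start at $t_\alpha$, jointly cover $\delta_G(t_\alpha)$, and each ends either at a terminal in $T-t_\alpha$ or at a vertex of $V\cap M_\alpha$. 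Were all of them of the first kind they would form a linkage of $t_\alpha$ in $G\setminus M_\alpha$, so some $R_P$ ends at a vertex $v_\alpha\in V\cap M_\alpha$. Before the stationary argument I would first streamline $\mathcal{P}_\alpha$ --- after contracting $T-t_\alpha$ in $G\setminus M_\alpha$, using Pym's theorem (Theorem~\ref{t: Pym}) and Lemma~\ref{lem: missing exactly one} --- so that effectively one bad segment remains, carrying the obstruction in its endpoint $v_\alpha$. As $\alpha$ is a limit and the chain is continuous, $v_\alpha\in M_\alpha=\bigcup_{\beta<\alpha}M_\beta$, hence $f(\alpha):=\min\{\beta<\alpha:\ v_\alpha\in M_\beta\}$ is well defined, and $f:B_0\to\kappa$ is regressive. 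By Fodor's lemma there are a stationary $S\subseteq B_0$ and a $\gamma<\kappa$ with $v_\alpha\in M_{\gamma+1}$ for all $\alpha\in S$.

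It remains to derive a contradiction from the fact that the $\kappa$ many terminals $t_\alpha$, $\alpha\in S$ --- pairwise distinct, since $\alpha<\alpha'$ in $S$ forces $t_\alpha\in M_{\alpha+1}\subseteq M_{\alpha'}$ but $t_{\alpha'}\notin M_{\alpha'}$ --- all ``fail through'' the single model $M_{\gamma+1}$. For fixed $\alpha\in S$ the situation is: $\delta_G(t_\alpha)$ is the unique Erdős-Menger $t_\alpha$-cut of $G$, yet in $G\setminus M_\alpha$ it cannot be covered by edge-disjoint $T$-paths without driving one of them into $M_\alpha$ at $v_\alpha\in M_{\gamma+1}$. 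I expect to dispose of this by trying to reroute the offending segment out of $M_\alpha$ and onward to $T-t_\alpha$ inside $G\setminus M_\alpha$, iterating the cut-at-the-frontier construction and using the augmenting-path dichotomy (Lemma~\ref{lem: aug path}) together with Lemma~\ref{lem: tight cut} --- the latter, via the uniqueness hypothesis, prevents any single edge outside $\delta_G(t_\alpha)$ from being indispensable for linking $t_\alpha$ in $G$: either a rerouting succeeds, contradicting the choice of $t_\alpha$, or the genuine obstruction to it upgrades (with the help of $\mathcal{P}_\alpha$) to an Erdős-Menger $t_\alpha$-cut of $G$ different from $\delta_G(t_\alpha)$, contradicting the hypothesis. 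That only the \emph{weakened} property (linkability, not uniqueness of the Erdős-Menger cut) can be salvaged in $G\setminus M_\alpha$ reflects exactly this mechanism: rerouting preserves full coverage of $\delta_G(t_\alpha)$ but surrenders control over where the cut sits. The step I expect to be the real obstacle is precisely this last rerouting-and-bookkeeping argument --- both turning the ``bad vertex'' $v_\alpha$ into a legitimate regressive witness (absorbing possibly many bad segments into one earlier model) and converting the localised obstruction into a violation of the uniqueness hypothesis; everything before it is only the scaffolding that makes Fodor's lemma applicable.
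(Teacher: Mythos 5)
You've correctly identified the outer scaffolding (assume the bad set is stationary, produce a regressive function on its limit points, apply Fodor's lemma), and the observations that $t_\alpha\notin M_\alpha$, that $\delta_{G\setminus M_\alpha}(t_\alpha)=\delta_G(t_\alpha)$, and that a linkage $\mathcal{P}_\alpha\in M_{\alpha+1}$ exists are all sound. But the combinatorial core is missing, and you flag it yourself: you never say how the ``rerouting-and-bookkeeping'' step terminates. The object you feed into Fodor --- a single vertex $v_\alpha$ where one truncated segment falls into $M_\alpha$ --- carries too little information. Even after Fodor collapses all the $v_\alpha$'s ($\alpha\in S$) into $M_{\gamma+1}$, there is no contradiction: nothing prevents $\kappa$ many distinct terminals from each having one failed segment landing on the \emph{same} vertex of $M_{\gamma+1}$. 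The appeal to Pym's theorem and Lemma~\ref{lem: missing exactly one} to ``streamline'' to one bad segment is also off target; Lemma~\ref{lem: missing exactly one} controls how many edges of $\delta(t)$ a linkage can miss, not how many segments enter $M_\alpha$, and in general many paths of $\mathcal{P}_\alpha$ will dip into $M_\alpha$.

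What the paper does differently, and what is really needed, is to build a \emph{pairwise disjoint} family $\{X_\xi:\xi<\kappa\}$ by transfinite recursion across the entire stationary set, not to treat each $\alpha$ in isolation. At stage $\xi$ one contracts all earlier $X_\eta$'s (a claim using elementarity shows each $\delta_{G\setminus M_{\alpha_\xi}}(X_\eta)$ is still an Erdős-Menger $t_\eta$-cut, so the contraction preserves linkability information), observes via the Lifting lemma that $t_\xi$ is still not linked in the contracted grapht, and takes $X_\xi$ to be the $t_\xi$-side of the $\trianglelefteq$-largest Erdős-Menger $t_\xi$-cut there; this guarantees the $X_\xi$'s are disjoint. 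The uniqueness hypothesis is then used exactly once, to force $\delta_G(X_\xi)\cap M_{\alpha_\xi}\neq\emptyset$: if this cut avoided $M_{\alpha_\xi}$ it would already be an Erdős-Menger $t_\xi$-cut in $G$ distinct from $\delta_G(t_\xi)$. Fodor is then applied to the regressive map $\alpha_\xi\mapsto\min\{\beta:\ \delta_G(X_\xi)\cap M_\beta\neq\emptyset\}$, producing $\kappa$ many cuts of pairwise disjoint sets all meeting the fixed small model $M_\gamma$; since each edge has only two endpoints it can lie in at most two such boundaries, and $|M_\gamma|<\kappa$ gives the contradiction. That disjointness-plus-counting mechanism is the missing idea; a vertex-based regressive function and ad hoc rerouting does not replace it.
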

\begin{proof}
Let $ S\subseteq \kappa $ be the set of those ordinals $ \alpha $ for which the linkability condition 
fails in $ G\setminus M_\alpha $. Suppose for a 
contradiction that $ S $ is stationary. Let $ \left\langle \alpha_\xi:\ \xi<\kappa  \right\rangle $ be 
the increasing enumeration of $ S $.  Fix a well-order $ \prec $ of $ T $ that is in $ M_0 $. We define $ X_\xi\subseteq V $ 
and 
$ t_\xi\in T $ for $ 
\xi<\kappa $ by 
transfinite recursion such that (see Figure \ref{fig: key lemma}):
\begin{enumerate}
\item\label{item: disjoint} the sets $ X_\xi $ are pairwise disjoint,
\item\label{item: t} $ X_\xi \cap T=\{ t_\xi \} $,
\item\label{item: path-system} $ \delta_{G \setminus M_{\alpha_\xi}}(X_\xi) $ is an Erdős-Menger $ t_\xi$-cut in $ G 
\setminus 
M_{\alpha_\xi} $,
\item\label{item: uses an edge down} $ \delta_{G}(X_\xi)\cap M_{\alpha_\xi}\neq \emptyset $,
\item\label{item: definable} $ X_\xi $ and $ t_\xi $  are definable from the parameters $G,\  \left\langle M_\alpha:\ \alpha 
\leq\alpha_\xi  
\right\rangle $ and $ \prec $.
\end{enumerate}

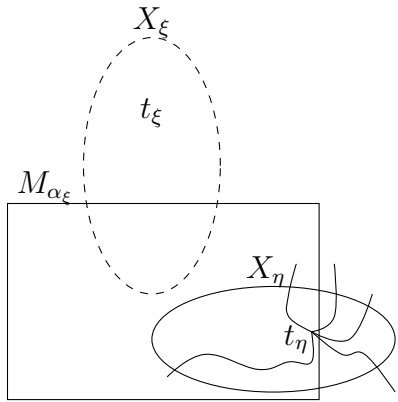
\begin{figure}[h]
\centering

\begin{tikzpicture}

\draw  (-1.9,0.6) rectangle (2.2,-2);
\draw[dashed]  (0,1.1) ellipse (0.9 and 1.7);
\draw  (3.2,-1.9) node (v1) {} ellipse (0 and 0);

\draw  plot[smooth, tension=.7] coordinates {(2.1,-1.1) (1.8,-0.9) (1.8,-0.6) (1.9,-0.2)};
\draw  plot[smooth, tension=.7] coordinates {(2.1,-1.1) (2.1,-1.5) (1.8,-1.5) (1.4,-1.6) (0.7,-1.4) (0.2,-1.7)};
\draw  plot[smooth, tension=.7] coordinates {(2.1,-1.1) (2.5,-1.4) (2.8,-1.4) (v1)};
\draw  plot[smooth, tension=.7] coordinates {(2.1,-1.1) (2.4,-0.9) (2.4,-0.2)};
\draw  plot[smooth, tension=.7] coordinates {(2.1,-1.1) (2.6,-1.2) (2.9,-0.6)};
\node at (0,1.8) {$t_\xi$};
\node at (-1.4,0.8) {$M_{\alpha_\xi}$};
\draw  (1.6,-1.2) ellipse (1.6 and 0.7);
\node at (1.9,-1.2) {$t_\eta$};
\node at (0,3) {$X_\xi$};
\node at (1.5,-0.3) {$X_\eta$};
\end{tikzpicture}
\caption{The choice of $ t_\xi $ and $ X_\xi $.} \label{fig: key lemma}
\end{figure}

Let $ \xi<\kappa $ and suppose that $ t_\eta$ and $\ X_\eta $  are defined for $ \eta<\xi $ such that the conditions 
(\ref{item: disjoint})-(\ref{item: definable}) are obeyed so far.  The linkability condition fails in $ 
G\setminus 
M_{\alpha_\xi} $ by assumption. Let $ t_\xi\in T $ be the $ \prec $-smallest witness for this. Note that we must have $ t_\xi 
\in T\setminus 
M_{\alpha_\xi} $ by 
Observation \ref{obs: out-prev linkability partially} and hence $ \delta_{G\setminus M_{\alpha_\xi}}(t_\xi) 
=\delta_{G}(t_\xi)$. Let $ G_\xi $ be the 
grapht we obtain from $ G\setminus 
M_{\alpha_\xi} $ by contracting $ X_\eta $ to $ t_\eta $ for every $ \eta<\xi $. Note that $ G_\xi $ is definable from the 
parameters 
$G,\  \left\langle M_\alpha:\ \alpha \leq\alpha_\xi  \right\rangle $ and $ \prec $. 
\begin{claim}\label{clm: remains Erdos-Menger}
For every  $ \eta<\xi $: $ \delta_{G\setminus M_{\alpha_\xi}}(X_\eta) $ is an Erdős-Menger $ t_\eta $-cut in  $ G\setminus 
M_{\alpha_\xi} $ and hence $ X_{\eta} $ is boundary-linked w.r.t. $ t_\eta $ in $ G\setminus 
M_{\alpha_\xi} $.
\end{claim}
\begin{proof}
 Let $ \eta<\xi $ be given. By condition (\ref{item: path-system}),  $ \delta_{G\setminus M_{\alpha_\eta}}(X_\eta) $ is an 
 Erdős-Menger $ t_\eta $-cut in $ G\setminus M_{\alpha_\eta} $. Let $ \mathcal{P}$ be a path-system that witnesses this. 
 Since $ X_\eta, M_{\alpha_\eta}\in M_{\alpha_\xi} $ we can assume that $ \mathcal{P}\in M_{\alpha_\xi} $.  But then we 
 claim that for  every $ e\in \delta_{G\setminus M_{\alpha_\xi}}(X_\eta) $ the unique $ P_e\in \mathcal{P} $ 
through $ e $ has no edges in $ M_{\alpha_\xi} $. Indeed, if $ f\in E(P_e) \cap M_{\alpha_\xi}$ then $ P_e\in 
M_{\alpha_\xi} 
$ because $ P_e $ is 
definable from $ f, 
\mathcal{P}\in M_{\alpha_\xi} $ but then $ P_e\in M_{\alpha_\xi} $ and hence $ e\in E(P_e)\subseteq M_{\alpha_\xi} $ 
which contradicts $ e\in 
\delta_{G\setminus 
M_{\alpha_\xi}}(X_\eta) $. Thus $ \mathcal{P}\setminus M_{\alpha_\xi} $ witnesses that $ \delta_{G\setminus 
M_{\alpha_\xi}}(X_\eta) $ is an Erdős-Menger $ t_\eta $-cut in  $ G\setminus 
M_{\alpha_\xi} $. 
\end{proof}
\begin{corollary}\label{cor: t xi not linkable in G xi}
 $ t_\xi $ is not linked in $ G_\xi $.
\end{corollary}
\begin{proof}
Suppose for a contradiction that $ \mathcal{P} $ links $ t_\xi $ in $ G_\xi $. Then Claim \ref{clm: 
remains Erdos-Menger} ensures that 
the Lifting lemma (Observation \ref{obs: lift up}) is 
applicable to $ \mathcal{P} $ in $ G\setminus M_\xi $ 
with the sets  
\[ X_{t}:=
\begin{cases} X_\eta &\mbox{if } t=t_\eta\text{ for some } \eta<\xi \\
\{ t \} & \mbox{if }t\in T\setminus \{ t_\eta:\ \eta <\xi \} . 
\end{cases}  \]Since $ \delta_{G_\xi}(t_\xi)=\delta_{G\setminus 
M_{\alpha_\xi}}(t_\xi)=\delta_G(t)$, we conclude that $ t_\xi $ is 
linked in $ G\setminus M_{\alpha_\xi} $ which contradicts the choice of $ t_\xi $.
\end{proof}

We take the $ \trianglelefteq $-largest Erdős-Menger $ t_\xi$-cut in $ G_{\xi} $ and define $ X_\xi $ to be the $ t_\xi $-side 
of it. The preservation of all the properties but (\ref{item: uses an edge down}) 
follows directly from the construction. We show that (\ref{item: uses an edge down}) is also preserved. It follows by the 
Lifting lemma (Observation \ref{obs: lift up}) that $ 
\delta_{G_{\xi}}(X_\xi)( =\delta_{G\setminus M_{\alpha_\xi}}(X_\xi) )$  is an Erdős-Menger $ t_\xi $-cut in $ 
G\setminus M_{\alpha_\xi} $ as 
well. 
Note that $\delta_{G\setminus M_{\alpha_\xi}}(X_\xi)\neq \delta_G(t_\xi) $ because otherwise  $ t_\xi $ would be linked in 
$ 
G\setminus 
M_{\alpha_\xi} $ by any path-system that witnesses that $ \delta_{G\setminus M_{\alpha_\xi}}(X_\xi) $ is an Erdős-Menger 
$ t_\xi $-cut in $ G\setminus M_{\alpha_\xi} $.  Since $ \delta_G(t_\xi) $ is the only Erdős-Menger $ t_\xi$-cut in  $ G $ we 
conclude that $ \delta_G(X_{\xi}) $ is not an Erdős-Menger $ t_\xi $-cut in $ G $. Therefore $\delta_G(X_{\xi})\cap 
M_{\alpha_\xi}\neq 
\emptyset  $ since otherwise any path-system that witnesses that $\delta_G(X_{\xi})\cap M_{\alpha_\xi}\ $ is an 
Erdős-Menger 
$ t_\xi $-cut in $ G\setminus M_{\alpha_\xi} $ also witnesses this in $ G $ contradicting the uniqueness of $ \delta_G(t_\xi) 
$. The recursion is 
done.

Let $ S' $ be the set of limit ordinals in $ S $. Then $ S' $ is still stationary. For $ \alpha_\xi \in S' $ we let $ f(\alpha_\xi) $ be 
the smallest 
ordinal 
$ \beta $ with  
$\delta_G(X_{\xi})\cap M_{\beta}\neq \emptyset  $. This $ f $ is well-defined because $\delta_G(X_{\xi})\cap 
M_{\alpha_\xi}\neq \emptyset  $ by (\ref{item: uses an edge down}). 
Furthermore, $ f(\alpha_\xi)<\alpha_\xi $ because $ M_{\alpha_\xi}=\bigcup_{\beta<\alpha_\xi}M_\beta $ since $ 
\left\langle M_\alpha:\ 
\alpha<\kappa  \right\rangle $ is continuous.  By Fodor's lemma (Lemma \ref{lem: Fodor}) we can find a stationary $ 
S''\subseteq S' $ and a $ \gamma<\kappa $ such that $ f(\alpha)=\gamma $ for every $ \alpha \in S'' $. Then each of the $ 
\kappa $ many 
cuts $ \delta_G(X_\xi)$  $ (\alpha_\xi \in S'') $ meets $ M_\gamma $ but an  $e\in M_\gamma $ can belong to at most 
two of them which is a contradiction because $ \left|M_\gamma\right|<\kappa $.
\end{proof}

\begin{corollary}\label{cor: suitable submodel}
Let $ \kappa $ be an infinite cardinal,  $ \mathcal{X} $ a set of size at most $ \kappa 
$ and let  $ G=(V,E,I,T) $ be a grapht such that for every $ t\in T $, $ \delta(t) $ 
is the unique Erdős-Menger $ t $-cut. Then there exists an elementary submodel $ M $ of size $ \kappa $ with $ 
\mathcal{X}\cup\{ G \}\subseteq M  $ such that 
$ G \setminus M $ 
satisfies the linkability condition.
\end{corollary}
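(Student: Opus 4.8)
The plan is to build a long enough chain of elementary submodels and read off a suitable member from the club produced by Lemma~\ref{lem: no stat error}. Set $\lambda:=\kappa^{+}$; this is an uncountable regular cardinal no matter what $\kappa$ is. By the usual L\"owenheim--Skolem construction I would choose, by transfinite recursion on $\alpha<\lambda$, an increasing continuous sequence $\langle M_\alpha:\ \alpha<\lambda\rangle$ of elementary submodels such that $\kappa\cup\mathcal{X}\cup\{G\}\subseteq M_0$, $\langle M_\beta:\ \beta\leq\alpha\rangle\in M_{\alpha+1}$, and $|M_\alpha|=\kappa$ for every $\alpha<\lambda$: at a successor step take the Skolem hull of $M_\alpha\cup\{\langle M_\beta:\ \beta\leq\alpha\rangle\}$, and at a limit step take the union of the previous models. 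Since $\kappa\subseteq M_0$ and $|\alpha|\leq\kappa<\lambda$ at every stage, these models all have size exactly $\kappa$.

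Next, the sequence $\langle M_\alpha:\ \alpha<\lambda\rangle$ satisfies all hypotheses of Lemma~\ref{lem: no stat error} with $\lambda$ playing the role of ``$\kappa$'': it is increasing and continuous, $\langle M_\beta:\ \beta\leq\alpha\rangle\in M_{\alpha+1}$, $|M_\alpha|=\kappa<\lambda$ for each $\alpha$, $G\in M_0$, and by assumption $\delta(t)$ is the unique Erdős-Menger $t$-cut for every $t\in T$. Hence Lemma~\ref{lem: no stat error} provides a club $U$ of $\lambda$ such that $G\setminus M_\alpha$ satisfies the linkability condition for every $\alpha\in U$.

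Finally, I would pick any $\alpha\in U$ and put $M:=M_\alpha$. Then $M$ is an elementary submodel of size $\kappa$ with $\mathcal{X}\cup\{G\}\subseteq M_0\subseteq M$, and $G\setminus M$ satisfies the linkability condition, which is exactly what is claimed. The argument is essentially bookkeeping; the one point that requires care is ensuring $|M_\alpha|=\kappa$ throughout (this is why $\kappa$ is placed inside $M_0$), since the linkability of $G\setminus M_\alpha$ is not preserved under enlarging $M_\alpha$ — so one cannot first locate a small working model and then inflate it to size $\kappa$, and must instead work along a chain of length $\kappa^{+}$ all of whose members already have size $\kappa$.
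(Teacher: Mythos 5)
Your proposal is correct and follows essentially the same route as the paper: build an increasing continuous $\in$-chain of elementary submodels of length $\kappa^{+}$, each of size $\kappa$, invoke Lemma~\ref{lem: no stat error} to obtain a club, and take any member of that club as $M$. The only cosmetic difference is that you explicitly seed $\kappa\subseteq M_0$ to pin down $|M_\alpha|=\kappa$, which is a clean way to ensure what the paper simply stipulates.
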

\begin{proof}
We build a sequence as in Lemma \ref{lem: no stat error} and find a club of suitable submodels. In more detail: By 
transfinite recursion, we define an increasing, 
continuous sequence $ \left\langle M_\alpha:\ \alpha<\kappa^{+}  \right\rangle $  of elementary submodels with $ \left\langle 
M_\beta:\ 
\beta\leq\alpha  \right\rangle\in M_{\alpha+1} $, $ \left|M_\alpha\right|=\kappa $ and $\mathcal{X}\cup\{ G \}\subseteq 
M_{\alpha} $ for each $ 
\alpha<\kappa^{+} $.  Let $ M_0 $ be an elementary submodel  of size $ 
\kappa 
$ with $ \mathcal{X}\cup\{ G \}\subseteq M_0  $. Suppose that  $ \alpha< \kappa^{+}$ and  $ M_\beta $ is already defined 
for $ \beta<\alpha $. 
If $ \alpha  $ is a limit ordinal we let $ M_\alpha:=\bigcup_{\beta<\alpha}M_\beta 
$. If $ \alpha=\beta+1 $, then we let $ M_{\beta+1} $ to be an elementary submodel of size $ \kappa $ with $  
\mathcal{X}\cup\{ G, 
\left\langle M_\gamma:\ \gamma\leq \beta  \right\rangle \}\subseteq M_{\beta+1} $. The recursion is done and by Lemma 
\ref{lem: no stat error} we can find not just one but club 
many elementary submodels in the constructed sequence that are as desired.
\end{proof}
\subsection{Induction and singular compactness}
We prove Theorem \ref{thm: linkage} by transfinite induction on  $ \left|E\right|=:\kappa $. We defer the proof of the case $ 
\kappa\leq\omega $ 
and will return to it in Section \ref{sec: countable case}.
Suppose now that $ \kappa>\omega $. By Lemma \ref{lem: joker} (applied with the whole $ T $) we can assume without 
loss of generality that for each $ 
t\in T $, $ \delta(t) $ 
is the unique Erdős-Menger $ t $-cut. Assume first that $ \kappa $ is regular.  Let $ \left\langle 
M_\alpha:\ 
\alpha<\kappa  \right\rangle $ be an increasing, 
continuous sequence of elementary submodels with $ \left\langle M_\beta:\ \beta\leq\alpha  \right\rangle\in M_{\alpha+1} $ 
and $ \left|M_\alpha\right|<\kappa $ for each $ \alpha<\kappa $ and $ G\in M_0 $. Let $ U $ be a club of $ \kappa $ as in 
Lemma \ref{lem: no stat 
error} and let $ \left\langle \alpha_\xi:\ \xi<\kappa  \right\rangle $ be the increasing enumeration of $ U $. By Observation 
\ref{obs: prev inner Euler} and by the choice of $ U $ we know for each $ \xi<\kappa $, that the grapht $ G\setminus 
M_{\alpha_\xi} $ is inner 
Eulerian and satisfies the linkability condition. Since $ G, M_{\alpha_\xi}\in  M_{\alpha_{\xi+1}}$, we have $ G\setminus 
M_{\alpha_\xi}\in 
M_{\alpha_{\xi+1}} $. Thus by applying Observations \ref{obs: prev inner Euler} and \ref{obs: in-prev linkability} we 
conclude that the graphts 
$ G \cap M_{\alpha_0}=:G_{-1} $ and
$ (G\setminus M_{\alpha_\xi})\cap M_{\alpha_{\xi+1}}=:G_\xi $ for $ 0\leq\xi<\kappa $ are inner Eulerian and satisfy the 
linkability condition. 
By induction we can take a perfect linkage $ \mathcal{P}_\xi $ of $ G_\xi $ for $ -1\leq \xi <\kappa $. Since 
the sequence $ \left\langle M_\alpha:\ 
\alpha<\kappa  \right\rangle $ is continuous and $ U $ is a club of $ \kappa $, the subsequence $ \left\langle M_{\alpha_\xi}:\ 
\xi<\kappa  \right\rangle $ is also continuous. Thus the graphts $ G_{\xi} $  form an 
edge-partition of $ G $. But then $ \bigcup_{-1\leq\xi<\kappa}\mathcal{P}_{\xi} $ is a perfect linkage of $ G $.

Now we turn to the case where $ \kappa $ is singular. The proof in this case is based on Shelah's singular compactness 
technique.  Let $ \left\langle \kappa_\alpha:\ \alpha<\mathsf{cf}(\kappa)  \right\rangle $ be an 
increasing, continuous sequence of cardinals with limit $ \kappa $ in which $\kappa_0\geq\mathsf{cf}(\kappa) $. We define a 
``matrix''  
  \[ \{ M_{\alpha, n}:\ \alpha<\mathsf{cf}(\kappa), n<\omega \} \]
  of elementary submodels with the following properties:
  
 \begin{enumerate}
  \item\label{item: sing size} $ \left|M_{\alpha,n}\right|=\kappa_\alpha $,
  \item\label{item: sing contains size} $\kappa_\alpha\subseteq M_{\alpha,n} $,
  \item\label{item: sing contains previous} $ M_{\alpha', n'}\in M_{\alpha,n}$ if either  $ n'<n $ or $ n'=n $ and $ 
 \alpha'<\alpha $.
 \item\label{item: G}$ G\in M_{0,0} $,
 \item\label{item: linkel}$ G\setminus M_{\alpha, n} $ satisfies the linkability condition.
 \end{enumerate}
 \begin{fact}[{\cite[Claim 3.7]{soukup2011elementary}}]\label{fact: elementary contains}
  If $ M $ is an elementary submodel with $ X\in M $ and $ \left|X\right|\subseteq M $, then $ X\subseteq M $.
  \end{fact}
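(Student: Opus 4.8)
The plan is to use elementarity of $M$ to pull into $M$ a bijection witnessing the cardinality of $X$, and then read off membership of each element of $X$ from that bijection.

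First I would set $\kappa:=\left|X\right|$. Since $X\in M$ and the cardinality of a set is definable (in the language of set theory, with $X$ as the only parameter), elementarity gives $\kappa\in M$; moreover the hypothesis already hands us that $\kappa=\left|X\right|\subseteq M$, so every ordinal $\alpha<\kappa$ is an element of $M$. Next I would invoke elementarity ``downwards'' for the formula $\varphi(f,\kappa,X)$ expressing ``$f$ is a bijection from $\kappa$ onto $X$'': in the universe there is such an $f$, and the parameters $\kappa,X$ lie in $M$, so there is some $f\in M$ with $\varphi^{M}(f,\kappa,X)$, and then elementarity ``upwards'' guarantees that this $f$ really is a bijection from $\kappa$ onto $X$.

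Finally, given an arbitrary $x\in X$, let $\alpha<\kappa$ be the unique ordinal with $f(\alpha)=x$. Then $\alpha\in M$ (because $\kappa\subseteq M$) and $f\in M$, so applying elementarity to the formula ``$y=f(\alpha)$'' with parameters $f,\alpha\in M$ yields a witness $y\in M$ with $y=f(\alpha)=x$; hence $x\in M$. As $x$ was arbitrary, $X\subseteq M$, which is the claim.

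I do not expect a genuine obstacle here: the only point requiring (tacit) care is that the finite set $\Sigma$ of formulas fixed for the paper contains the ones used above (``$f$ is a bijection onto $X$'' and ``$y=f(\alpha)$''), which is precisely what the standing convention that $\Sigma$ contains everything relevant provides.
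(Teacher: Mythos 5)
Your proof is correct, and it is the standard argument for this well-known fact: pull the cardinal $\kappa=|X|$ into $M$ by definability, pull a bijection $f:\kappa\to X$ into $M$ by elementarity, and then recover each $x\in X$ as $f(\alpha)$ for some $\alpha\in\kappa\subseteq M$. Note that the paper itself does not prove this Fact but cites it from \cite[Claim 3.7]{soukup2011elementary}, where essentially the same argument appears; your remark about ensuring $\Sigma$ contains the formulas ``$f$ is a bijection from $\kappa$ onto $X$'' and ``$y=f(\alpha)$'' is exactly the right (tacit) housekeeping the paper's convention is designed to absorb.
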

 \begin{corollary}\label{cor: elementary contains}
 We have $ M_{\beta,n}\subseteq M_{\alpha,n} $ and $ M_{\beta,n}\subseteq M_{\beta, n+1} $ whenever $ 
 \beta<\alpha<\mathsf{cf}(\kappa) $ and $ n<\omega $.
 \end{corollary}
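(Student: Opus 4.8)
The plan is to obtain both inclusions as immediate consequences of Fact~\ref{fact: elementary contains}, the only task being to supply the right parameters. Recall that a von Neumann cardinal $\kappa_\beta$ is literally the set of ordinals below it, so that "$\langle\kappa_\gamma:\gamma<\mathsf{cf}(\kappa)\rangle$ is increasing" gives $\kappa_\beta\subseteq\kappa_\alpha$ for $\beta<\alpha$; this is precisely the hook that lets property~(\ref{item: sing contains size}) feed the cardinality hypothesis of Fact~\ref{fact: elementary contains}.

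First I would prove $M_{\beta,n}\subseteq M_{\alpha,n}$ for $\beta<\alpha<\mathsf{cf}(\kappa)$ and $n<\omega$. Apply Fact~\ref{fact: elementary contains} with $M:=M_{\alpha,n}$ and $X:=M_{\beta,n}$. Property~(\ref{item: sing contains previous}) (the case $n'=n$, $\alpha'=\beta<\alpha$) gives $X\in M$. Property~(\ref{item: sing size}) gives $|X|=\kappa_\beta$, and then $\kappa_\beta\subseteq\kappa_\alpha\subseteq M_{\alpha,n}$, where the first inclusion uses that the sequence of cardinals is increasing and the second is property~(\ref{item: sing contains size}). Thus $|X|\subseteq M$, and Fact~\ref{fact: elementary contains} yields $M_{\beta,n}\subseteq M_{\alpha,n}$.

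Next I would prove $M_{\beta,n}\subseteq M_{\beta,n+1}$, again by Fact~\ref{fact: elementary contains}, now with $M:=M_{\beta,n+1}$ and $X:=M_{\beta,n}$. Property~(\ref{item: sing contains previous}) (the case $n'=n<n+1$) gives $X\in M$, and property~(\ref{item: sing contains size}) applied to $M_{\beta,n+1}$ together with $|X|=\kappa_\beta$ from property~(\ref{item: sing size}) gives $|X|=\kappa_\beta\subseteq M_{\beta,n+1}$. Hence $M_{\beta,n}\subseteq M_{\beta,n+1}$, completing the proof. I do not expect any genuine obstacle here: the argument is purely bookkeeping with the matrix's defining properties, and the single subtlety worth flagging in the write-up is the identification of the cardinal $\kappa_\beta$ with the set of smaller ordinals, which is what makes "increasing sequence of cardinals" deliver $\kappa_\beta\subseteq\kappa_\alpha$.
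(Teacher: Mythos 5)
Your proof is correct and is exactly the argument the paper intends: the paper states this as an immediate corollary of Fact~\ref{fact: elementary contains} and omits the proof, and your bookkeeping with properties~(\ref{item: sing size}), (\ref{item: sing contains size}), and (\ref{item: sing contains previous}) is precisely how one fills it in.
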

 Let $ M_{\alpha}:=\bigcup_{n<\omega}M_{\alpha,n} $ for $ \alpha<\mathsf{cf}(\kappa) $.

\begin{lemma}\label{lem: the sequence is continuous}
$ M_\alpha=\bigcup_{\beta<\alpha}M_\beta $ for every  limit ordinal $ \alpha< \mathsf{cf}(\kappa) $.
\end{lemma}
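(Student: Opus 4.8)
The plan is to establish the two inclusions separately; one is pure monotonicity, and the other reduces, once the limit step of the matrix construction is unwound, to interchanging two unions.

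For $\bigcup_{\beta<\alpha}M_\beta\subseteq M_\alpha$: whenever $\beta<\alpha$ and $n<\omega$, Corollary \ref{cor: elementary contains} gives $M_{\beta,n}\subseteq M_{\alpha,n}$; uniting over $n$ gives $M_\beta\subseteq M_\alpha$, and then uniting over $\beta<\alpha$ finishes it. This direction uses nothing about $\alpha$.

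For $M_\alpha\subseteq\bigcup_{\beta<\alpha}M_\beta$: since $M_\alpha=\bigcup_{n<\omega}M_{\alpha,n}$, it is enough to prove the row-wise identity $M_{\alpha,n}=\bigcup_{\beta<\alpha}M_{\beta,n}$ for each fixed $n$, for then $M_{\alpha,n}\subseteq\bigcup_{\beta<\alpha}M_\beta$ and we conclude via $\bigcup_n\bigcup_{\beta<\alpha}M_{\beta,n}=\bigcup_{\beta<\alpha}\bigcup_n M_{\beta,n}$. The inclusion $\supseteq$ in the row-wise identity is again Corollary \ref{cor: elementary contains}. The inclusion $\subseteq$ is the single point where the construction is invoked: at a limit ordinal $\alpha$ of the first coordinate the matrix is formed by taking unions along that coordinate, $M_{\alpha,n}:=\bigcup_{\beta<\alpha}M_{\beta,n}$. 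I would justify that this is the legitimate (hence chosen) option by verifying the defining properties for this union: the sequence $\langle M_{\beta,n}:\beta<\alpha\rangle$ is increasing by Corollary \ref{cor: elementary contains}, so its union is an elementary submodel; it contains $\bigcup_{\beta<\alpha}\kappa_\beta=\kappa_\alpha$; its cardinality is squeezed to $\kappa_\alpha$ between $\sup_{\beta<\alpha}\kappa_\beta=\kappa_\alpha$ from below and $|\alpha|\cdot\sup_{\beta<\alpha}\kappa_\beta=\kappa_\alpha$ from above, using $|\alpha|<\mathsf{cf}(\kappa)\le\kappa_0\le\kappa_\alpha$ and continuity of the cardinal sequence at the limit $\alpha$; and the membership clause holds, since each $M_{\gamma,m}$ with $m<n$ already belongs to every $M_{\beta,n}$, and each $M_{\gamma,n}$ with $\gamma<\alpha$ belongs to $M_{\gamma+1,n}\subseteq\bigcup_{\beta<\alpha}M_{\beta,n}$.

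I expect the only genuine subtlety to be precisely this cardinality squeeze, which is where the hypothesis that $\alpha$ is a limit enters (so that $\kappa_\alpha=\sup_{\beta<\alpha}\kappa_\beta=\bigcup_{\beta<\alpha}\kappa_\beta$), together with the standing assumption $\kappa_0\ge\mathsf{cf}(\kappa)>|\alpha|$; everything else is bookkeeping. For a successor $\alpha=\beta+1$ the identity $M_\alpha=M_\beta$ plainly fails — the sequence $\langle M_\alpha\rangle$ jumps at successors — which is why continuity is asserted only at limit ordinals and why, in the surrounding induction, $G$ is edge-partitioned along the increments $M_{\alpha+1}\setminus M_\alpha$.
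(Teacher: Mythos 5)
Your easy direction is identical to the paper's, and your overall decomposition (reduce to showing $M_{\alpha,n}\subseteq\bigcup_{\beta<\alpha}M_\beta$ for each fixed $n$) also matches. The problem is the hard direction, where you rely on an assumption about the construction that the paper never makes and that does not in fact hold.

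You assert that at a limit $\alpha$ the matrix entry is \emph{defined} by $M_{\alpha,n}:=\bigcup_{\beta<\alpha}M_{\beta,n}$, and you justify this by checking properties (1)--(4) for the union. But you never check property (5), namely that $G\setminus M_{\alpha,n}$ satisfies the linkability condition, and that is exactly the property one cannot get for free from a union. The whole point of Corollary \ref{cor: suitable submodel} is that property (5) is delicate: it requires running Lemma \ref{lem: no stat error} and picking a suitable submodel, typically one that strictly contains whatever set you feed in. There is no reason the union of a chain of models each satisfying property (5) satisfies it again --- linkability of $t\in T\setminus M$ in $G\setminus M$ is not closed under increasing the set $M$ --- so the row-wise identity $M_{\alpha,n}=\bigcup_{\beta<\alpha}M_{\beta,n}$ is not available. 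The paper deliberately sidesteps this: it never proves or uses any row-wise continuity. Instead it shows $M_{\alpha,n}\in M_{0,n+1}\subseteq\bigcup_{\beta<\alpha}M_\beta$ using property (3), shows $\left|M_{\alpha,n}\right|=\kappa_\alpha=\bigcup_{\beta<\alpha}\kappa_\beta\subseteq\bigcup_{\beta<\alpha}M_\beta$ using property (2) and continuity of the cardinal sequence, and then applies Fact \ref{fact: elementary contains} (if $X\in M$, $\left|X\right|\subseteq M$ and $M$ is an elementary submodel, then $X\subseteq M$) to the elementary submodel $\bigcup_{\beta<\alpha}M_\beta$. That argument needs no claim at all about how $M_{\alpha,n}$ was built and is the step your proposal is missing.
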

\begin{proof}
Let a limit ordinal  $ \alpha< \mathsf{cf}(\kappa) $ be fixed. We start with the easy direction: $ M_\alpha\supseteq 
\bigcup_{\beta<\alpha}M_\beta 
$.  Let $ \beta<\alpha $ be given. By 
Observation 
\ref{cor: elementary contains}, 
$ M_{\beta,n}\subseteq M_{\alpha,n} $ for every $ n<\omega $, therefore
\[ M_\beta=\bigcup_{n<\omega}M_{\beta,n} \subseteq \bigcup_{n<\omega}M_{\alpha,n}=M_\alpha. \]

 It remains to prove that $ 
M_\alpha\subseteq \bigcup_{\beta<\alpha}M_\beta $. To do so, it is enough to show that $ M_{\alpha,n}\subseteq 
\bigcup_{\beta<\alpha}M_\beta 
$ for each $ n<\omega $. Let $ n<\omega $ be fixed. By Observation \ref{cor: elementary contains} we have $ 
M_{\alpha,n}\in M_{0,n+1}\subseteq M_0\subseteq 
\bigcup_{\beta<\alpha}M_\beta $. By property (\ref{item: sing size}),  $ \left|M_{\beta,n} \right|=\kappa_\beta $ and hence 
$ \left|M_\beta\right|=\kappa_\beta $  for each $ \beta<\alpha $.
We also know that $ \kappa_\alpha=\bigcup_{\beta<\alpha}\kappa_\beta $ since $ \left\langle \kappa_\beta:\ 
\beta<\mathsf{cf}(\kappa)  \right\rangle $ is 
continuous. Thus 
\[ \left|M_{\alpha,n}\right|=\kappa_\alpha=\bigcup_{\beta<\alpha}\kappa_\beta
\subseteq\bigcup_{\beta<\alpha}M_\beta. \] The set  
$\bigcup_{\beta<\alpha}M_\beta$ is an elementary submodel because it is the union of a chain of elementary submodels. 
Since we have shown $ 
M_{\alpha,n}\in\bigcup_{\beta<\alpha}M_\beta $ and 
$ \left|M_{\alpha,n}\right|\subseteq \bigcup_{\beta<\alpha}M_\beta$  we may conclude $ M_{\alpha,n}\subseteq 
\bigcup_{\beta<\alpha}M_\beta$ by Fact \ref{fact: elementary contains}.
\end{proof}

\begin{lemma}\label{lem: perfect linkage in G alpha n}
For every $ \alpha<\mathsf{cf}(\kappa) $ and $ n<\omega $, the grapht $G_{\alpha,n}:= G\cap(M_{\alpha,n}\setminus 
M_{\alpha,n-1}) $ has a 
perfect linkage $ \mathcal{P}_{\alpha,n} $  (where $ 
M_{\alpha,-1}:=\emptyset $).
\end{lemma}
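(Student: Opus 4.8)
The plan is to verify that $G_{\alpha,n}$ is an inner Eulerian grapht with at most $\kappa_\alpha<\kappa$ edges that satisfies the linkability condition; once this is done, the induction hypothesis of the Linkage theorem \ref{thm: linkage} (on $\left|E\right|$) produces the desired perfect linkage $\mathcal{P}_{\alpha,n}$. The cardinality bound is immediate: $E(G_{\alpha,n})\subseteq M_{\alpha,n}$, and $\left|M_{\alpha,n}\right|=\kappa_\alpha$ by property (\ref{item: sing size}), while $\kappa_\alpha<\kappa$ because $\left\langle\kappa_\beta:\ \beta<\mathsf{cf}(\kappa)\right\rangle$ is increasing with limit $\kappa$. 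Set $N:=M_{\alpha,n-1}$, with the convention $N:=\emptyset$ when $n=0$; then $G_{\alpha,n}=(G\setminus N)\cap M_{\alpha,n}$. The order in which these two operations are performed is what makes the elementarity arguments go through, so I would keep it fixed as written.

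First I would check that $H:=G\setminus N$ is an inner Eulerian grapht satisfying the linkability condition and, moreover, that $H\in M_{\alpha,n}$. If $n=0$ then $H=G$, which is inner Eulerian and linkable by the standing assumptions on $G$, and $H=G\in M_{0,0}\subseteq M_{\alpha,0}$ by Corollary \ref{cor: elementary contains}. If $n\geq1$ then $N=M_{\alpha,n-1}$ is an elementary submodel with $G\in M_{0,0}\subseteq M_{\alpha,n-1}$ by Corollary \ref{cor: elementary contains}, so Observation \ref{obs: prev inner Euler} gives that $H=G\setminus N$ is inner Eulerian, and property (\ref{item: linkel}) applied with index $n-1$ gives that $H$ satisfies the linkability condition. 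For the membership $H\in M_{\alpha,n}$: when $n=0$ this is $G\in M_{0,0}\subseteq M_{\alpha,0}$; when $n\geq1$ we have $G\in M_{0,0}\subseteq M_{\alpha,n}$ by Corollary \ref{cor: elementary contains} and $N=M_{\alpha,n-1}\in M_{\alpha,n}$ by property (\ref{item: sing contains previous}) (the case $n'=n-1<n$), so $H=G\setminus N$, being definable from the parameters $G,N\in M_{\alpha,n}$, lies in $M_{\alpha,n}$.

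Finally I would relativize a second time, now to $M_{\alpha,n}$: since $H\in M_{\alpha,n}$ is an inner Eulerian grapht, Observation \ref{obs: prev inner Euler} gives that $G_{\alpha,n}=H\cap M_{\alpha,n}$ is inner Eulerian; since $H\in M_{\alpha,n}$ satisfies the linkability condition, Observation \ref{obs: in-prev linkability} gives that $G_{\alpha,n}=H\cap M_{\alpha,n}$ also satisfies the linkability condition. Together with $\left|E(G_{\alpha,n})\right|\leq\kappa_\alpha<\kappa$ this lets us invoke the induction hypothesis on $G_{\alpha,n}$, yielding the perfect linkage $\mathcal{P}_{\alpha,n}$. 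There is no genuinely hard step here; the only point needing care is the observation that $G\setminus M_{\alpha,n-1}$ is itself an element of $M_{\alpha,n}$, which is precisely what licenses the second application of Observations \ref{obs: prev inner Euler} and \ref{obs: in-prev linkability}.
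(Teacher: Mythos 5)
Your proof is correct and follows exactly the same route as the paper: show that $G\setminus M_{\alpha,n-1}$ is an inner Eulerian grapht satisfying the linkability condition and lies in $M_{\alpha,n}$, relativize once more via Observations \ref{obs: prev inner Euler} and \ref{obs: in-prev linkability}, and invoke the induction hypothesis using $\left|E(G_{\alpha,n})\right|\leq\kappa_\alpha<\kappa$. The only difference is that you spell out the $n=0$ case and the membership $G\setminus M_{\alpha,n-1}\in M_{\alpha,n}$ explicitly, which the paper merely asserts.
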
 
\begin{proof}
The premise of Theorem \ref{thm: linkage} is preserved for $ G\setminus M_{\alpha,n-1}\in M_{\alpha,n}$ by property 
(\ref{item: linkel}) and  Observation \ref{obs: prev inner Euler}. Thus it is preserved for $ 
(G\setminus M_{\alpha,n-1})\cap M_{\alpha,n}= G_{\alpha,n} $ as well because of Observation 
\ref{obs: in-prev linkability}. Since $ 
\left|E(G_{\alpha,n})\right|=\kappa_\alpha<\kappa $,  we can apply induction to obtain a perfect linkage in $ G_{\alpha,n} $. 
\end{proof}

Then $ \mathcal{P}_\alpha:=\bigcup_{n<\omega}\mathcal{P}_{\alpha,n} $ is a perfect linkage in $ G\cap M_\alpha $.
By property (\ref{item: sing contains previous}) and Fact \ref{fact: elementary contains} we know that  \[ 
M_{\alpha+1,n}, M_{\alpha+1,n-1}\in 
M_{\alpha, n+1}\subseteq M_\alpha. \]   Therefore $ 
G_{\alpha+1,n}\in 
M_{\alpha} $. Hence by Lemma \ref{lem: perfect linkage in G alpha n} we can pick a perfect linkage $ 
\mathcal{P}_{\alpha+1,n}\in M_\alpha $ for $ G_{\alpha+1,n} $. But then 
 we claim that $ \mathcal{Q}_{\alpha,n}:=\mathcal{P}_{\alpha+1,n}\setminus M_\alpha $  is a perfect linkage in $ 
 G_{\alpha+1,n}\setminus 
 M_\alpha=G\cap(M_{\alpha+1,n}\setminus 
(M_{\alpha+1,n-1}\cup M_\alpha)) $. Indeed, if $ e\in E(G_{\alpha+1,n}) \setminus M_\alpha $ is incident with a terminal 
vertex, then the unique 
$ P_e\in \mathcal{P}_{\alpha+1,n} $ through $ e $ is not in $ M_\alpha $ because $ P_e $ is definable from $ e, 
\mathcal{P}_{\alpha+1,n} \in 
M_\alpha $, furthermore, $ E(P)\cap M_\alpha=\emptyset $ for similar reason. It follows, that $ 
\mathcal{Q}_\alpha:=\bigcup_{n<\omega}\mathcal{Q}_{\alpha,n} $ is a perfect linkage in 
$ G \cap(M_{\alpha+1}\setminus M_\alpha) $. Since Lemma \ref{lem: the sequence is continuous} ensures that $ G\cap 
M_0 $ together with $ G \cap(M_{\alpha+1}\setminus M_\alpha) $  for $ \alpha<\mathsf{cf}(\kappa) $ forms an 
edge-partition of $ G $, the path-system $ \mathcal{P}_0 \cup \bigcup_{\alpha<\mathsf{cf}(\kappa)}\mathcal{Q}_\alpha $ is 
a perfect linkage in $ G $.
\section{Proof of the countable case}\label{sec: countable case}
It this section we prove the restriction of the Linkage theorem  (Theorem \ref{thm: linkage}) to graphts with only  countable 
many edges. First, we need some preparation.

\begin{observation}\label{obs: inner Euler preserved}
If $ G=(V,E,I, T) $ is an inner Eulerian grapht and $ H $ is either a cycle or a $ T $-path in $ G $ then the grapht $ G-E(H) $ 
is inner Eulerian.
\end{observation}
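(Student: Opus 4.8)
The plan is to unwind the definition of inner Eulerian set by set. Fix an arbitrary $X\subseteq V\setminus T$; one needs to show that $d_{G-E(H)}(X)$ is not an odd natural number. Deleting edges only affects a cut through those edges, so $\delta_{G-E(H)}(X)=\delta_G(X)\setminus E(H)$ and hence $d_{G-E(H)}(X)=d_G(X)-\left|\delta_G(X)\cap E(H)\right|$.

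The heart of the matter is that the contribution $c:=\left|\delta_G(X)\cap E(H)\right|$ of $H$ to the cut $\delta_G(X)$ is a finite even number --- this is exactly the observation already used in the proof of Corollary~\ref{cor: inner Eulerian grapht partionon}. Finiteness is immediate since $H$ is finite. For evenness I would argue: if $H$ is a cycle, then walking around it the membership in $X$ switches an even number of times; if $H$ is a $T$-path, then both of its endpoints lie in $T$, hence outside $X$ (as $X\cap T=\emptyset$), so a walk along $H$ starts and ends outside $X$ and therefore crosses $\delta_G(X)$ an even number of times.

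With this in hand I would split on whether $d_G(X)$ is finite. If $d_G(X)$ is infinite, then $\delta_G(X)\setminus E(H)$ differs from $\delta_G(X)$ only by the finite set $\delta_G(X)\cap E(H)$, so $d_{G-E(H)}(X)=d_G(X)$ remains infinite and is in particular not a natural number. If $d_G(X)$ is finite, then inner Eulerianity of $G$ forces it to be even, and subtracting the even number $c$ keeps it even. In either case $d_{G-E(H)}(X)$ is not an odd natural number, which is what is required. I do not expect a genuine obstacle here; the only point needing a little care is to treat the infinite-cut case separately (where parity is meaningless) rather than attempting a uniform parity argument.
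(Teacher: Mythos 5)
Your proof is correct and follows essentially the same route as the paper: establish that $\left|\delta_G(X)\cap E(H)\right|$ is a finite even number and then conclude by parity in the finite-cut case (the paper leaves the evenness claim implicit with ``it is easy to see,'' and treats the infinite case silently since it only needs to rule out an odd natural number). Your explicit handling of the infinite case is a harmless elaboration, not a different argument.
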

\begin{proof}
For a set $ X\subseteq V \setminus T$ it is easy to see that $ \left| \delta_G(X)\cap E(H)\right| 
$ must be an even natural number. But then if $  d_{G-E(H)}(X) $ is finite, then  $ 
d_{G-E(H)}(X) =d_{G}(X) -\left| \delta_G(X)\cap E(H)\right| $ holds, where both quantities on the 
right side are even and therefore  $ d_{G-E(H)}(X) $ is also even.
\end{proof}
\begin{lemma}\label{lem: rest cycle coverable}
Let  $ G=(V,E,I,\{ s,t \}) $ be an inner Eulerian grapht  with $ \left|E\right|\leq \omega $ where $ s $ is 
linked. Then there is a 
linkage $ \mathcal{P} $ for $ s $ such that there is a family of pairwise edge-disjoint cycles in $ G-E(\mathcal{P}) $ 
covering $ 
\delta_{G-E(\mathcal{P}) }(t) $. 
\end{lemma}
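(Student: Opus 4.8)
The plan is to begin from a decomposition of $G$ and then upgrade it. By Corollary \ref{cor: inner Eulerian grapht partionon} fix an edge-partition of $G$ into a family $\mathcal{P}_0$ of edge-disjoint $st$-paths and a family $\mathcal{C}_0$ of edge-disjoint cycles. If $\mathcal{P}_0$ covers $\delta(s)$ then it is already a linkage for $s$ (its members use pairwise distinct edges of $\delta(s)$) and the members of $\mathcal{C}_0$ through $t$ are edge-disjoint cycles in $G-E(\mathcal{P}_0)$ covering $\delta(t)\setminus E(\mathcal{P}_0)=\delta_{G-E(\mathcal{P}_0)}(t)$, so we are done. The whole point is to reduce the general situation to this one, i.e. to turn $\mathcal{P}_0$ into a linkage for $s$ without losing control near $t$.

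For that I would use the structure theory of Erd\H{o}s-Menger cuts. As $s$ is linked, $\delta(s)$ is an Erd\H{o}s-Menger $st$-cut, hence the $\trianglelefteq$-smallest one; let $C$ be the $\trianglelefteq$-largest one, which exists because these cuts form a complete lattice \cite{joo2019complete}, and let $K$ be its $t$-side, so $C=\delta_G(K)$ with $s\notin K\ni t$. A system $\mathcal{S}$ of edge-disjoint $st$-paths orthogonal to $C$ has exactly $|C|=\lambda_G(s,t)=d(s)$ members (the last equality since $s$ is linked), so $\mathcal{S}$ covers $\delta(s)$, is a linkage for $s$, and uses every edge of $C$. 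Cutting each $\mathcal{S}$-path at its unique edge in $C$ splits $\mathcal{S}$ into a linkage of $s$ ``into'' $C$ inside $G\setminus E(G[K])$ and, inside $K$, a system of edge-disjoint paths from the endpoints of $C$ to $t$; in particular $K$ is boundary-linked towards $t$ in the sense of the Lifting lemma (Observation \ref{obs: lift up}).

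Next I claim it suffices to prove the assertion for the contracted grapht $G'':=G/(V\setminus K)$, where the whole $s$-side becomes a single terminal $s''$ with $\delta_{G''}(s'')=C$. Indeed $G''$ is again inner Eulerian and $s''$ is linked in it; moreover — here the maximality of $C$ is used — one checks, via the fact that an $st$-path crosses $C$ before it crosses any cut lying inside $K$, that $\delta_{G''}(s'')$ is the \emph{unique} Erd\H{o}s-Menger $s''t$-cut of $G''$. Conversely, given a linkage $\mathcal{P}''$ for $s''$ in $G''$ and an edge-disjoint family of cycles in $G''-E(\mathcal{P}'')$ covering $\delta_{G''-E(\mathcal{P}'')}(t)$: since $\mathcal{P}''$ uses up all of $C$, gluing each of its paths along $C$ to the matching path of the linkage of $s$ into $C$ (an instance of the Lifting lemma) yields a linkage $\mathcal{P}$ for $s$ in $G$ with $E(\mathcal{P})\cap E(G[K])=E(\mathcal{P}'')\setminus C$; as all of $C$ is removed, $G-E(\mathcal{P})$ has no edge between $K$ and $V\setminus K$ and around $t$ coincides with $G''-E(\mathcal{P}'')$, so the cycle family carries over. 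Thus we may assume $\delta(s)$ is the unique Erd\H{o}s-Menger $st$-cut of $G$, equivalently its unique minimum $st$-cut.

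In this remaining case I would return to $\mathcal{P}_0\cup\mathcal{C}_0$ and successively eliminate the cycles of $\mathcal{C}_0$ through $s$, re-routing so that a cycle's two edges at $s$ move onto $st$-paths while the edges it loses are arranged into $st$-pieces so that the leftover fragments recombine into cycles, Observation \ref{obs: inner Euler preserved} preserving inner Eulerianity throughout. As $\delta(s)$ is a minimum $st$-cut, as long as the current path-system misses $\delta(s)$ it has fewer than $\lambda_G(s,t)$ members, so augmenting paths are available (Lemma \ref{lem: aug path}), each altering it by only finitely many edges, and the tightness phenomena (Lemmas \ref{lem: tight cut} and \ref{lem: missing exactly one}) pin down which edges at $t$ are forced, preventing a bad ``leftover'' single edge at $t$. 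The main obstacle — and where the hypothesis $|E|\le\omega$ really enters — is that when $d(s)$ is infinite this is an infinite sequence of steps, each harmless in isolation but requiring orchestration into a genuine limit: a single final linkage for $s$ together with a final edge-disjoint family of cycles covering the unmatched part of $\delta(t)$. This I would achieve by processing the countably many edges of $\delta(s)\cup\delta(t)$ in turn, at each stage committing some edges irrevocably to cycles (shielded from all later re-routings) and guaranteeing that every edge of $\delta(s)$ eventually lies on an $st$-path and every edge of $\delta(t)$ eventually lies on an $st$-path or a committed cycle.
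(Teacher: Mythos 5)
Your reduction to the case where $\delta(s)$ is the unique Erd\H{o}s--Menger $st$-cut is sound (it is essentially an instance of Lemma~\ref{lem: joker} together with the Lifting lemma), but it is not needed for this lemma, and more importantly it does not resolve the real difficulty. After the reduction your argument stops being a proof. The last paragraph describes a plan of ``re-routing'' cycles of $\mathcal{C}_0$ through $s$, invoking augmenting paths and a limit, and then acknowledges that ``orchestration into a genuine limit'' is the main obstacle --- but that obstacle is precisely what has to be overcome, and no construction is given. In particular: (a) the statement ``as long as the current path-system misses $\delta(s)$ it has fewer than $\lambda_G(s,t)$ members'' is false when $d(s)$ is infinite (a system of $\aleph_0$ paths can use $\aleph_0$ distinct edges of $\delta(s)$ and still miss one); the availability of augmenting paths must instead be deduced from the uniqueness of the Erd\H{o}s--Menger cut, and even then each augmentation only changes finitely many edges and there is no reason for the $\omega$-limit of these systems to be a path-system at all. (b) The appeals to Lemmas~\ref{lem: tight cut} and~\ref{lem: missing exactly one} to ``pin down which edges at $t$ are forced'' do not connect to anything concrete in this lemma --- that machinery belongs to Lemma~\ref{lem: deletion of two edges}, where it is combined with the present lemma, not used to prove it.

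The paper's proof is both simpler and genuinely different. It fixes \emph{any} linkage $\mathcal{P}$ of $s$ and \emph{any} edge-partition $\mathcal{A}$ of $G$ into cycles and $st$-paths (Corollary~\ref{cor: inner Eulerian grapht partionon}), and then, enumerating the countably many edges of $\delta(s)\cup\delta(t)$, peels off one subgraph $H$ per step --- an $st$-path or a cycle through $t$ avoiding $s$ --- that covers the current edge and leaves the premise intact. The only nontrivial case is an edge $e\in\delta(t)\setminus E(\mathcal{P})$: take the member $H_e\in\mathcal{A}$ through $e$, orient it away from $t$ at $e$, stop at the first vertex $v$ of $V(\mathcal{P})$ to get a segment $Q$, and splice $sPvQ$ for the $P\in\mathcal{P}$ through $v$; then $\mathcal{P}-P$ still links $s$ in $G-E(H)$ because $Q$ is internally disjoint from $V(\mathcal{P})$. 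Inner Eulerianity is preserved automatically by Observation~\ref{obs: inner Euler preserved}. Thus there is no infinite limit of augmenting paths to control: one object is \emph{committed} at each of $\omega$ steps and never touched again. That one-step extraction lemma, and the explicit $sPvQ$ construction, is the idea your proposal is missing.
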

\begin{proof}
It is enough to show that for every $ e\in \delta(s)\cup \delta(t) $ there is a subgraph $ H $ containing $ e $, that is either an $ 
st $-path or a 
cycle through $ t $  that does not contain $ s $,  such that $ G-E(H) $ preserves the premise of Lemma 
\ref{lem: rest cycle coverable}. 

Being inner Eulerian is preserved 
``automatically'' by Observation \ref{obs: inner Euler preserved} thus we only need to maintain the linkability of $ s $. Let $ 
\mathcal{P} $ be any linkage of $ s $. If $ e\in E(\mathcal{P}) $, then the unique $ P_e\in \mathcal{P} $ through $ e $ is 
suitable as $ H $. Suppose that $ e\notin  E(\mathcal{P}) $. Then $ e\notin \delta(s) $ (because $ \mathcal{P} $ links $ s $) 
and hence we must have
$ e\in \delta(t) $. Let $ \mathcal{A} $ be an edge-partition of $ G $ into cycles and $ st $-paths (exists by Corollary 
\ref{cor: inner Eulerian grapht partionon}) and let $ H_e\in \mathcal{A}$ be the unique element through $ e $. If $ 
E(H_e)\cap E(\mathcal{P})=\emptyset $, then $ H:=H_e $ is suitable. 
Otherwise consider the orientation of $ H_e $ where it is a directed $ ts $-path or a directed cycle in which $ e $ is an 
outgoing edge of $ t $ . Let $ v $ be the first vertex of $ V(\mathcal{P}) $ that we reach by going 
along this 
orientation 
starting from $ t $ (see Figure \ref{fig: cycle}) and let $ Q $ be the corresponding segment of $ H_e $ between $ t $ and $ v 
$. Take a path $ P\in 
\mathcal{P} $ witnessing $ v\in V(\mathcal{P}) $. Then the path $ H:=sPvQ $ is suitable because it does not share any 
edges 
with the paths in $ \mathcal{P}-P $ and therefore the linkability of $ s $ in $ G-E(H) $ is witnessed by $ \mathcal{P}-P 
$.

\begin{figure}[h]
\centering

\begin{tikzpicture}

\node (v1) at (-2.5,0) {};
\node (v2) at (2.5,0) {};

\draw  plot[smooth, tension=.7] coordinates {(v1) (-1.1,0.7) (0.7,0.9) (1.9,0.4) (v2)};
\draw  plot[smooth, tension=.7] coordinates {(-2.5,0) (-1.3,-0.1) (-0.1,0.1) (0.7,-0.3) (2,-0.3) (2.5,0)};
\draw  plot[smooth, tension=.7] coordinates {(-2.5,0) (-2.2,-1) (-1.6,-0.9) (-1.3,-0.1) (-0.6,0.4) (-0.1,0.1) (0,-1.6) (1.4,-1.4) 
(2.5,0)};

\draw[dashed]  plot[smooth, tension=.7] coordinates {(2.5,0) (0.8,0.1) (0.7,0.9)};

\node at (-2.5,0.3) {$s$};
\node at (2.7,0.2) {$t$};
\node at (0.7,1.1) {$v$};
\node at (1.2,0.3) {$Q$};
\node at (-0.8,1) {$P$};
\node at (2,0.1) {$e$};
\end{tikzpicture}
\caption{The construction of $ H $.} \label{fig: cycle}
\end{figure}
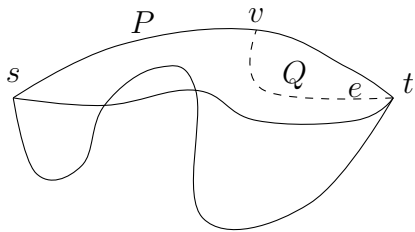
\end{proof}
\begin{lemma}\label{lem: deletion of two edges}
Let  $ G=(V,E,I,\{ s,t \}) $ be an inner Eulerian grapht with  $ \left|E\right|\leq \omega $ where  $ \delta(s) $ is 
the unique 
Erdős-Menger $ st $-cut. Then after the deletion of at most two edges, $ s $ remains linked.
\end{lemma}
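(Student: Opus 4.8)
The plan is to read the two edges off a linkage of $s$. Since $\delta(s)$ is an Erdős–Menger $st$-cut, $s$ is linked, so we may fix a linkage $\mathcal{P}$ of $s$: a system of edge-disjoint $st$-paths covering $\delta(s)$ in which every member ends in $s$. Because an $st$-path meets $\delta(s)$ in exactly one edge (its edge at $s$), minimality forces a bijection $e\mapsto P_e$ between $\delta(s)$ and $\mathcal{P}$, where $P_e$ is the member of $\mathcal{P}$ through $e$; in particular $\left|\mathcal{P}\right|=d(s)$, and a path through $e$ uses $e$ as its edge at $s$.

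If $d(s)\le 1$ we simply delete the at most one edge of $\delta(s)$, which isolates $s$, so $s$ is then vacuously linked. If $d(s)\ge 2$, choose distinct $e,f\in\delta(s)$ and delete them; then $P_e\ne P_f$, no member of $\mathcal{P}\setminus\{P_e,P_f\}$ uses $e$ or $f$, and $\mathcal{P}\setminus\{P_e,P_f\}$ is a system of edge-disjoint $st$-paths in $G-\{e,f\}$ covering $\delta_{G-\{e,f\}}(s)=\delta(s)\setminus\{e,f\}$. Hence $s$ remains linked, proving the lemma. (If instead one needs the deleted pair disjoint from $\delta(s)$ — as in the intended application — one takes, in case $E(G)\ne E(\mathcal{P})$, two edges of a cycle of $G-E(\mathcal{P})$: such a cycle exists since, by Observation~\ref{obs: inner Euler preserved} and Corollary~\ref{cor: inner Eulerian grapht partionon}, $G-E(\mathcal{P})$ together with its cycle-cover of $\delta(t)$ from Lemma~\ref{lem: rest cycle coverable} leaves a grapht that, after contracting $T$, has no odd cut and hence decomposes into cycles.)

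The displayed statement is thus the routine part; I expect the genuine obstacle to lie one level up, in Lemma~\ref{lem: 1path make}, where one needs the two deleted edges to be chosen so that $\delta(s)$ remains the \emph{unique} Erdős–Menger $st$-cut (so the lemma can be iterated, peeling off one path at a time). It is precisely here that Lovász's parity bookkeeping fails and must be replaced by a structural argument: one deletes two edges of a cycle through $t$ and rules out the birth of a new Erdős–Menger $st$-cut strictly $\trianglelefteq$-above $\delta(s)$ using Lemma~\ref{lem: tight cut} and the completeness of the lattice of Erdős–Menger $st$-cuts.
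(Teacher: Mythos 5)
You have misread the quantifier: the lemma asserts that $s$ remains linked after deleting \emph{any} pair of edges, not that \emph{some} pair can be safely deleted. The paper's proof makes this explicit by opening with ``Suppose for a contradiction that there are (not necessarily distinct) $f,g\in E$ such that $s$ is not linked in $G-f-g$'', and the downstream use forces the universal reading: in Lemma~\ref{lem: 1path make} the deleted pair is a \emph{given} edge $e_0$ together with the next edge $f_0$ along a length-minimizing path through $e_0$ (so $f_0\notin\delta(s)$), and Corollary~\ref{cor: deletion of two edges} is then invoked to conclude that every terminal stays linked after deleting precisely that pair. Your argument --- pick two edges of $\delta(s)$ and drop the two corresponding paths from a linkage --- only shows that one convenient pair is safe, and it never touches the actual difficulty, namely deleting edges far from $s$. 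That is exactly where the hypothesis that $\delta(s)$ is the unique Erdős--Menger $st$-cut earns its keep; your parenthetical about choosing two edges on a cycle of $G-E(\mathcal{P})$ is again an existential move and does not close the gap.

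Consequently your closing claim that the displayed statement is ``the routine part'' and that the real obstacle sits in Lemma~\ref{lem: 1path make} has it backwards. This lemma is where the structural replacement for Lovász's parity bookkeeping lives: one shows via Lemma~\ref{lem: tight cut} that deleting $f$ alone cannot break linkability (else a second Erdős--Menger cut through $f$ would appear); then applies Lemma~\ref{lem: tight cut} to $G-f$ and $g$ to get a tight cut $C$ through $g$, contracts the $t$-side of $C+f$, uses Lemma~\ref{lem: missing exactly one} to deduce that every linkage of $s$ in the contracted grapht misses exactly one edge at $t$, and finally contradicts this via Lemma~\ref{lem: rest cycle coverable}, which manufactures a linkage whose leftover at $t$ is covered by cycles and hence cannot be a singleton. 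None of that is present in, or rendered unnecessary by, your sketch.
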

\begin{proof}
Suppose for a contradiction that there are (not necessarily distinct) $ f, g\in E $ such that $ s $ is not linked in $ G-f-g $. We 
may assume that 
$ \delta_G(s)\cap \{ f,g 
\}=\emptyset $ since otherwise the deletion of the edge set $ \{ f,g \}\setminus \delta_G(s) $ already ruins the linkability of $ 
s $. We know that $ s $ must remain linked in $ G-f $ since otherwise by applying 
Lemma \ref{lem: tight cut}
with $ G $ and $ f $ we obtain an Erdős-Menger $ st $-cut that contains $ f $ and hence distinct from 
$ \delta_{G}(s) $ contradicting the assumption that $ \delta_{G}(s) $ is the unique Erdős-Menger $ st $-cut.  

By the indirect assumption, $ s $ is not linked in $ G-f-g $ and we have just seen that it is in $ G-f $. Thus Lemma 
\ref{lem: tight cut} can be applied with $ G-f 
$ and $ g$. 
This provides a tight Erdős-Menger $ st $-cut $ C $  in $ G-f $ that contains $ g $. This $ C $ cannot be an Erdős-Menger 
$ st $-cut in $ G $ because $ \delta(s) $ is the unique one and $ g\in C\setminus \delta(s) $. Therefore $ f $ connects the two 
sides of $ C $, i.e. $ C+f $ is an $ st $-cut in $ G $ (but not an Erdős-Menger 
$ st $-cut because $ f\in (C+f)\setminus \delta(s)$).  Contract the $ t $-side of $ C+f $  to $ t $ to obtain $ G' $.  Note that $ t 
$ is not linked in $ G' $.
The 
tightness 
of $ C $ in $ G-f $ ensures 
that  $ G' $ with $ s $ and $ t $ satisfies the premise of Lemma \ref{lem: missing exactly one}. Therefore every linkage of
$ s $ in $ G' $ misses exactly one edge from $ \delta_{G'}(t) $. 

On the other hand, by Lemma \ref{lem: rest cycle coverable} there exists a linkage $ \mathcal{P} $ for $ s $ in $ 
G' $ such that there is a family of pairwise 
edge-disjoint cycles in $ G'-E(\mathcal{P}) $ covering $ \delta_{G'-E(\mathcal{P}) }(t) $. Since a cycle through $ t $ uses an 
even number of edges 
incident with $ t $, the set $ \delta_{G'-E(\mathcal{P}) }(t) $ cannot be a singleton. But then the existence of this $ 
\mathcal{P} $ contradicts the last sentence of 
the previous paragraph.
\end{proof}

\begin{corollary}\label{cor: deletion of two edges}
Let  $ G=(V,E,I,T) $ be an inner Eulerian grapht with $ \left|E\right|\leq \omega $. Then after the deletion of at most two 
edges all those $ t\in T $ 
remains linked for which $ \delta(t) $ was the unique Erdős-Menger $ t $-cut. 
\end{corollary}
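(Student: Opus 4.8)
The plan is to reduce the statement, one terminal at a time, to the two--terminal Lemma~\ref{lem: deletion of two edges} by collapsing all the other terminals into a single point. Fix a $t\in T$ for which $\delta_G(t)$ is the unique Erdős-Menger $t$-cut (so in particular $t$ is linked, since $\delta_G(t)$ being Erdős-Menger forces any witnessing path system to cover $\delta_G(t)$), pick a vertex $u\in T-t$, and let $G_t$ be the grapht obtained from $G$ by contracting $T-t$ to $u$ and deleting the resulting loops, with terminal set $\{t,u\}$. When $|T|=2$ this contraction is vacuous and the corollary is literally Lemma~\ref{lem: deletion of two edges}, so the content is in the general case.

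First I would verify that $G_t$ inherits the hypotheses of Lemma~\ref{lem: deletion of two edges}. One has $|E(G_t)|\leq|E(G)|\leq\omega$, and $G_t$ is inner Eulerian because $V(G_t)\setminus\{t,u\}=V\setminus T$ and for every $X\subseteq V\setminus T$ no edge of $\delta_G(X)$ is spanned by $T-t$, whence $\delta_{G_t}(X)=\delta_G(X)$ and $d_{G_t}(X)=d_G(X)$ is never an odd natural number. The one point needing a small argument is that $\delta_{G_t}(t)$ is the \emph{unique} Erdős-Menger $\{t,u\}$-cut of $G_t$: the $t(T-t)$-cuts of $G$ correspond bijectively to the $\{t,u\}$-cuts of $G_t$ (nothing changes on the $t$-side, and the $\{t,u\}$-paths of $G_t$ are exactly the $t(T-t)$-paths of $G$ with their $T-t$-endpoint relabelled to $u$ --- no edge of such a path being a contracted loop), this correspondence sends edge-disjoint path systems to edge-disjoint path systems and preserves orthogonality, hence preserves being Erdős-Menger, and it takes $\delta_G(t)$ to $\delta_{G_t}(t)$; so uniqueness transfers.

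Next I would invoke Lemma~\ref{lem: deletion of two edges} for $G_t$ \emph{with the two terminals in the opposite roles}, i.e.\ with $t$ playing the role of ``$s$'' and $u$ the role of ``$t$'' --- this is the crucial bookkeeping, since that lemma keeps linked precisely the terminal whose boundary cut is the unique Erdős-Menger one. It yields that $t$ stays linked in $G_t$ after the deletion of any at most two edges. To finish I would lift back: a set $D$ of at most two edges of $G$ maps to a set $D'$ of at most two edges of $G_t$ (edges of $D$ spanned by $T-t$ just vanish), one has $\delta_{G_t\setminus D'}(t)=\delta_{G\setminus D}(t)$, and a linkage of $t$ in $G_t\setminus D'$ --- a system of edge-disjoint $\{t,u\}$-paths covering $\delta_{G_t\setminus D'}(t)$ --- turns under the above correspondence into a system of edge-disjoint $T$-paths in $G\setminus D$ covering $\delta_{G\setminus D}(t)$, i.e.\ a linkage of $t$ there. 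As $t$ was an arbitrary eligible terminal and $D$ an arbitrary set of at most two edges, this is exactly the claim.

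The proof is short and the only things to watch are the two I highlighted: orienting the appeal to Lemma~\ref{lem: deletion of two edges} correctly (which terminal plays ``$s$''), and checking that ``$\delta(t)$ is the unique Erdős-Menger $t$-cut'' is preserved when $T-t$ is contracted; both are routine once the path/cut dictionary between $G$ and $G_t$ is written down.
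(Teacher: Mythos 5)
Your proof is correct and follows exactly the paper's one-line argument: contract $T-t$ to a single vertex, check that the hypotheses of Lemma~\ref{lem: deletion of two edges} transfer, apply it with $t$ in the role of ``$s$'', and translate the surviving linkage back to $G$. The paper leaves the verification implicit; you have simply filled in the routine details, including the one genuinely worth flagging (that $t$, not $u$, must play the role whose boundary cut is assumed unique).
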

\begin{proof}
For a given $ t\in T $ we contract $ T-t $ to a $ t' \in T-t $ and apply Lemma \ref{lem: deletion of two edges}.
\end{proof}

The restriction of the Linkage theorem (Theorem \ref{thm: linkage}) to graphts with countably many edges follows from the 
following lemma by a straightforward recursion:
\begin{lemma}\label{lem: 1path make}
Let  $ G=(V,E,I,T) $ be an inner Eulerian grapht with $ \left|E\right|\leq \omega $ that satisfies the linkability condition. Then 
for every  
 $e\in \bigcup_{t\in T}\delta(t)$ there exists a $T$-path $P$ through $e$ such that the grapht $G-E(P)$ is inner Eulerian and 
 satisfies the 
linkability condition.
\end{lemma}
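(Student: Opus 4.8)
The plan is to fix $e \in \delta(t_0)$ for some $t_0 \in T$ and to construct the desired $T$-path $P$ through $e$ step by step, maintaining at each stage a ``partial $T$-path'' (a path starting at $t_0$ via $e$, with no interior terminal vertices) whose removal keeps the grapht inner Eulerian and preserves linkability of all terminals \emph{except possibly the growing endpoint}. Concretely, I would work in the grapht $G_t$ obtained from $G$ by contracting $T - t$ to a single vertex $t'$, which is still inner Eulerian with $\delta(t) $ equal to (or refined to) the unique Erdős-Menger $t$-cut after an application of Lemma \ref{lem: joker}-style preprocessing; here the point is that Corollary \ref{cor: deletion of two edges} tells us that deleting up to two edges incident with such a $t$ keeps $t$ linked. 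The rough idea: start a partial path at $t_0$ along $e$; as long as its current endpoint $v$ is not in $T$, use that $G$ minus the current partial path's edges is still inner Eulerian (Observation \ref{obs: inner Euler preserved}-type reasoning, applied to the edge-partition of Corollary \ref{cor: inner Eulerian grapht partionon}) so $v$ has even degree in the remainder, hence at least one unused edge to continue along; the trouble is choosing \emph{which} continuing edge preserves linkability of \emph{all} the other terminals.

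The key mechanism is a ``splitting off'' argument in the spirit of Lovász. At the current endpoint $v \notin T$, suppose the partial path arrived along an edge $f_{\mathrm{in}}$. I want to pick an outgoing edge $f_{\mathrm{out}} \in \delta(v)$ such that splitting off the pair $\{f_{\mathrm{in}}, f_{\mathrm{out}}\}$ at $v$ (replacing the two edges by a single edge joining their other endpoints) preserves, for every $t \in T$, the property that $t$ is still linked. Equivalently, by the correspondence between linkages before and after splitting, I need that appending $f_{\mathrm{out}}$ to the partial path does not destroy linkability of any terminal. Since so far exactly the edges of the partial path have been removed and this is (after the last split) effectively the deletion of at most two edges at a time from the point of view of any single fixed terminal $t$, Corollary \ref{cor: deletion of two edges} guarantees that \emph{generic} small deletions are safe; the delicate part is that we must make infinitely many such choices coherently and guarantee termination, i.e. that the partial path actually reaches $T$ and does not wander forever. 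For this I would set up the construction so that each step strictly decreases a finite nonnegative integer quantity associated with a hypothetical bad continuation — this is exactly the role the text announces for the splitting-off technique: ``it reduces a finite quantity associated with a hypothetical counterexample for Lemma \ref{lem: 1path make}''. So the real argument is by contradiction: assume no good $T$-path through $e$ exists, take a counterexample minimizing this finite parameter (e.g. the number of terminals whose linkage is ``fragile'' with respect to $e$, or the size of some minimal tight cut separating $e$ from $T$), perform one splitting-off move that is legitimate by Corollary \ref{cor: deletion of two edges} together with Lemma \ref{lem: tight cut} and Lemma \ref{lem: missing exactly one}, and observe the resulting smaller grapht is no longer a counterexample, yet lifting its $T$-path back through the un-split vertex yields a $T$-path through $e$ in $G$ — contradiction.

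In more detail, the steps I would carry out are: (1) reduce to the two-terminal case by contracting $T - t_0$ and, symmetrically, handle the far endpoint by a second contraction, so that we may assume $T = \{s, t\}$ with $e \in \delta(s)$ and $\delta(s)$ the unique Erdős-Menger $st$-cut — this is where Corollary \ref{cor: deletion of two edges} applies cleanly; (2) using Lemma \ref{lem: rest cycle coverable}, fix a linkage $\mathcal{P}$ for $s$ together with an edge-disjoint family of cycles covering $\delta_{G - E(\mathcal P)}(t)$, so that $G$ is edge-partitioned into one linkage for $s$ plus cycles plus leftover; (3) the path $P_e \in \mathcal P$ through $e$ (if $e \in E(\mathcal P)$) is almost what we want, but removing it might destroy linkability of $t$ or inner-Eulerianness, so splice $P_e$ together with one of the covering cycles through $t$ — using that a cycle at $t$ contributes evenly to $\delta(t)$ — to repair the parity/linkability of $t$ exactly as in the proof of Lemma \ref{lem: deletion of two edges}; (4) if instead $e \notin E(\mathcal P)$ then $e \in \delta(t)$ and $e$ lies on one of the covering cycles, and a similar splice of that cycle with a path of $\mathcal P$ gives the $T$-path; in all cases verify inner-Eulerianness of $G - E(P)$ via Observation \ref{obs: inner Euler preserved} and linkability of both $s$ and $t$ in $G - E(P)$ via the surviving portions of $\mathcal P$ and the remaining cycles, invoking Corollary \ref{cor: deletion of two edges} for the at-most-two-edge perturbation at $s$ and the cycle-parity trick at $t$. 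The main obstacle I expect is step (3)–(4): ensuring that the single splice simultaneously fixes \emph{both} endpoints' linkability and the global inner-Eulerian condition, rather than trading one violation for another; this is precisely where the structural replacement for Lovász' parity argument has to do its work, and where Lemma \ref{lem: missing exactly one} (``missing exactly one edge'') becomes essential, since it pins down that the damage to $\delta(t)$ after removing a linkage is an isolated single edge that a cycle through $t$ can absorb.
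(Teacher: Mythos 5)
Your proposal correctly anticipates the two announced ingredients — Lov\'asz' splitting-off move and a ``minimal counterexample'' argument driven by a finite parameter — but it has genuine gaps that would prevent it from being completed along the route you sketch.

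First, the reduction to two terminals in your step (1) is not sound for this lemma. You need the single path $P$ to preserve linkability of \emph{every} $t\in T$ simultaneously, and contracting $T - t_0$ (and then again contracting around the far endpoint) forgets exactly which other terminal the path hits and what damage its removal does to that terminal and to the others. The paper keeps all of $T$: it first preprocesses via Lemma~\ref{lem: joker} (Claim~\ref{clm: spec counterexample}) so that $\delta(t)$ is the unique Erd\H{o}s–Menger $t$-cut for every $t\in T-s$, and only then invokes Corollary~\ref{cor: deletion of two edges}, which makes the (per-terminal) two-terminal contraction and applies Lemma~\ref{lem: deletion of two edges} there. Without that uniqueness hypothesis you cannot legitimately quote Corollary~\ref{cor: deletion of two edges} for the other terminals, and your plan has no substitute for Claim~\ref{clm: spec counterexample}.

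Second, your direct construction in steps (3)–(4) — taking a linkage $\mathcal P$ for $s$ together with a cycle family covering $\delta_{G-E(\mathcal P)}(t)$, and splicing a linkage path with a cycle through $t$ — is the engine of the proof of \emph{Lemma~\ref{lem: deletion of two edges}} (indeed Lemma~\ref{lem: rest cycle coverable} is stated for $T=\{s,t\}$ and used only there), not of Lemma~\ref{lem: 1path make}. For Lemma~\ref{lem: 1path make} the paper instead does a single split-off at $s$ of the edge $e_0$ together with the \emph{next} edge $f_0$ on a shortest linkage path $P_{e_0}$ through $e_0$, obtaining a grapht $G'$ with replacement edge $h_0$; the decreasing parameter is the minimum of $|E(P_{e_0})|$ over linkages of $s$, which drops by exactly one. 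Minimality of the counterexample then supplies a good $T$-path $P'$ through $h_0$ in $G'$, which is lifted back to a $T$-path $P$ through $e_0$ in $G$ with $E(P)\subseteq E(P')-h_0+e_0+f_0$. Your sketch does not identify this measure, and the quantities you float (``number of fragile terminals'', ``size of a tight cut'') are not obviously monotone under any split-off move.

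Third, a smaller but real error in your opening paragraph: removing a \emph{partial} path (one endpoint not in $T$) does not preserve inner-Eulerianness — Observation~\ref{obs: inner Euler preserved} applies only to full $T$-paths and cycles. The interior endpoint $v$ picks up odd-degree change, so ``$v$ has even degree in the remainder'' is false, and with it the proposed continuation mechanism. This is another reason the ``extend a partial path greedily'' framing cannot be made rigorous without first passing to the split-off recursion.
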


\begin{proof}
By Observation \ref{obs: inner Euler preserved}, $ G-E(P) $ is inner Eulerian for every $ T $-path $ P $, thus it is enough to 
focus on the preservation of the linkability 
condition. Suppose for a contradiction that the lemma is false. Observe that deletion of edges with both end-vertex in $ T $ 
does not ruin the linkability condition. Thus if $ G=(V,E,I,T) $ and $e_0\in \bigcup_{t\in T}\delta(t)$ 
form a counterexample, then  $ e_0$ has only one of its endpoints, $ s_{e_0}$ say, in $ T $. We associate the following 
quantity with counterexamples:

\[ \left|\left|(G, e_0)\right|\right|:=\min \{ \left|E(P_{e_0})\right|:\ \mathcal{P}\text{ is a linkage of }s_{e_0},\ e_0\in 
E(P_{e_0}),\ 
P_{e_0}\in 
\mathcal{P} \}. \]
Let us fix a counterexample $ (G,e_0) $ that minimizes $ \left|\left|(G, e_0)\right|\right|  $ and write simply $ s $ for $ 
s_{e_0} $.  
 Let $\mathcal{P}=\{ P_e:\ e\in \delta_G(s) \} $ be a path-system where the minimum is taken where $ e\in E(P_e) $ for $ 
 e\in \delta_G(s) $.
\begin{claim}\label{clm: spec counterexample}
We can assume without loss of generality that for every $ t\in T-s $: $ \delta_G(t) $ is the unique Erdős-Menger $ t $-cut in $ 
G $. 
\end{claim}
\begin{proof}
We show that our original counterexample can be modified in order to satisfy this additional property. Let $ \mathcal{F}=\{ 
X_t:\ t\in T \} $ be a family that we obtain by applying Lemma \ref{lem: joker} with $ T'=T-s $. Then \ref{item: joker 
EM}  of Lemma \ref{lem: joker} ensures that $ \delta_{G/\mathcal{F}}(t) $ is the unique Erdős-Menger $ t $-cut in $ 
G/\mathcal{F}=:G' $ for $ t\in T' $.  
For $ 
e\in \delta_G(s)=\delta_{G'}(s)$, let  $ 
P'_e $ be the $ T $-path in  $ G' $ defined by the initial segment of $ P_e $ from $ s $ until its first common 
vertex with $ 
\bigcup_{t\in T'}X_t $ and let $ \mathcal{P}':=\{ P'_e:\ e\in \delta_{G'}(s) \} $. Obviously  $\left|E(P'_{e_{0}})\right|\leq 
\left|E(P_{e_{0}})\right|$, thus if $ G' 
$ and $ e_0 $ form a counterexample, then it is also minimizing in the sense that $\left|\left|(G',e_0)\right|\right|= 
\left|\left|(G,e_0)\right|\right| $.

 Suppose for a contradiction that $ (G',e_0) $ is not a counterexample and let $ Q_{e_0} $ be a $ T $-path through $ e_0 $ 
 such that $ G'-E(Q_{e_0}) $ satisfies the 
 linkability condition.  For $ t\in T' $  let $ \mathcal{P}_t=\{ P_{t,e}:\ e\in 
 \delta_{G}(X_t) \} $ as in \ref{item: joker links} of Lemma \ref{lem: joker}.  Denote $ t_0 $  
the end-vertex of $ Q_{e_0} $ distinct from $ s $ and  let $ f_0 $ 
be the unique 
edge of $ Q_{e_0} $ incident with $ t_0 $ (see Figure \ref{fig: one more path}). Then $ R_{e_0}:=sQ_{e_0}f_0 P_{t_0, 
f_0}t_0 $ is a $ T $-path in 
$ G $ 
covering $ 
e_0 $.  Note that 

\[ (G-E(R_{e_0}))/\mathcal{F}=G'-E(Q_{e_0}) .\]

Let $ \mathcal{P}'_s $ consist of  the single-edge paths corresponding to $ \delta_G(s) $,  $ 
\mathcal{P}_{t_0}':=\mathcal{P}_{t_0}-P_{t_0, f_0} $ and $  
\mathcal{P}_t':= 
\mathcal{P}_t $ for $ t\in T'-t_0 $. These path-systems are as in the Lifting lemma 
(Observation \ref{obs: lift up})   for $ \mathcal{F} $ in $ G-E(R_{e_0}) $ including that $ \mathcal{P}'_t $ covers $ 
\delta_{G-E(R_{e_0})}(t) $ for $ t\in T $. But then any linkage of a 
$ t\in T $ in $G'-E(Q_{e_0}) $ can be lifted up to a linkage of $ t $ in $ G-E(R_{e_0}) $. 
Therefore $ G-E(R_{e_0}) $ satisfies the linkability condition. This means that $ R_{e_0} $ 
witnesses that $ (G,e_0) $ is not a counterexample, a contradiction. 

\begin{figure}[h]
\centering

\begin{tikzpicture}

\draw  (0.9,0.8) node (v1) {} ellipse (1.4 and 1.4);
\draw  (2.5,-3.2) node (v2) {} ellipse (1.4 and 1.5);


\draw[dashed]  plot[smooth, tension=.7] coordinates {(v1) (0.5,1.2) (0.1,1.2) (-0.8,1.2)};
\draw[dashed]  plot[smooth, tension=.7] coordinates {(v1) (0.2,0.5) (-1,0.3)};
\draw[dashed]  plot[smooth, tension=.7] coordinates {(v1) (0.9,0.1) (0.4,-0.1) (-0.4,-0.3)};
\draw[dashed] plot[smooth, tension=.7] coordinates {(v2) (1.8,-2.9) (0.1,-2.7)};
\draw[dashed] plot[smooth, tension=.7] coordinates {(v2) (2,-3.7) (0.4,-4.1)};
\draw[dashed]  plot[smooth, tension=.7] coordinates {(v2) (2.5,-4) (2,-4.2) (1,-4.6)};
\draw[dashed]  plot[smooth, tension=.7] coordinates {(v2) (3,-2.6) (3,-2.1) (2.9,-1.5)};


\draw  plot[smooth, tension=.7] coordinates {(-2.2,-1.9) (-2.1,-1.1) (-2.5,-0.6) (-2.1,0.2) (-1,0.3)};
\draw  plot[smooth, tension=.7] coordinates {(-2.2,-1.9) (-1.4,-1.5) (-1.2,-0.8) (-0.4,-0.3)};
\draw  plot[smooth, tension=.7] coordinates {(-2.2,-1.9) (-1.5,-2.7) (0.1,-2.7)};

\node at (-2.4,-2.1) {$s$};
\node at (-2,-2.5) {$e_0$};
\node at (-0.2,-2.4) {$Q_{e_0}$};
\node at (2.7,-3.4) {$t_0$};
\node at (1,-2.6) {$f_0$};
\node at (1.1,0.9) {$t_1$};
\node at (0,0.8) {$\mathcal{P}_{t_1}$};
\node at (0.7,-3.2) {$R_{e_0}$};
\node at (2,-2.7) {$P_{t_0,f_0}$};
\draw  plot[smooth, tension=.7] coordinates {(-1,0.3) (0.2,0.5)};
\draw  plot[smooth, tension=.7] coordinates {(-0.4,-0.3) (0.4,-0.1)};
\draw  plot[smooth, tension=.7] coordinates {(0.1,-2.7) (1.8,-2.9)};
\end{tikzpicture}
\caption{The construction of $ R_{e_0} $.} \label{fig: one more path}
\end{figure}
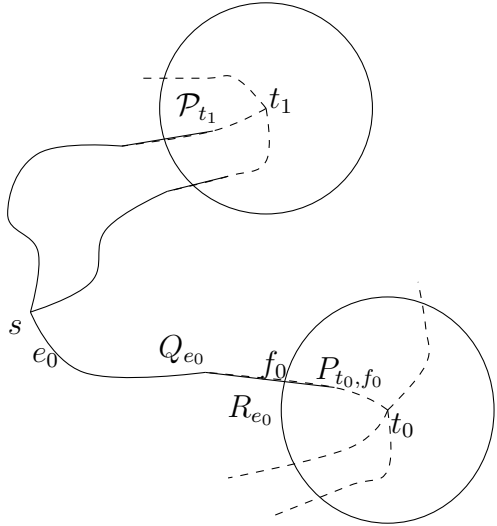
\end{proof}

Since only one endpoint of $ e_0 $ is in $ T $, we  have
$\left|E(P_{e_{0}})\right| \geq 2$. Let $f_0\in E(P_{e_0})$ be the edge next to $e_0$ in $ P_{e_0} $. We obtain $ G' $ from 
$ G 
$ by deleting the edges $e_0$ 
and $f_0$ and adding a single new edge $h_0$ connecting $s$ and the end-vertex of $f_0$ that is not shared with $e_0$ (i.e. 
we
split off $ e_0 $ and $ f_0 $, a technique by Lovász 
from \cite{lovasz1976some}). Let 
$P_{h_0}$ be the path in $ G' $ with $E(P_{h_0})=E(P_{e_0})-e_0-f_0+h_0$ and let us define 
$\mathcal{P}':=\mathcal{P}-P_{e_0}+P_{h_0}$.
 For $ X\subseteq V\setminus T $, the quantities $ d_{G}(X) $ and $ d_{G'}(X) $ are either both infinite or they have the 
 same 
 parity, 
 thus $ G' $ is also inner Eulerian. The linkability condition for $ s $  in $ G' $ is witnessed by $ \mathcal{P}' 
 $. 
 Let $ 
 t\in T-s $ be arbitrary. The linkability condition for $ t $ holds  in $ G-e_0-f_0 $ by 
 Corollary \ref{cor: deletion of two edges} via Claim \ref{clm: spec counterexample} and hence in $ G' $ as well  (if $ h_0\in 
 \delta_{G'}(t)  $, then $ h_0 $ is an edge between $ s $ and $ t $  and hence 
 a $ T 
 $-path itself). 
Note that $G'$ and $h_0$ cannot form a counterexample for Lemma \ref{lem: 1path make}
because $ \left|\left|(G',h_0)\right|\right| =\left|\left|(G,e_0)\right|\right|-1 $ by 
$\left|E(P_{h_{0}})\right|=\left|E(P_{e_{0}})\right|-1 
$. Therefore we can pick some $ T $-path $ P' $ in $ G' $ 
through 
$ h_0 $ such that the linkability condition holds in $ G'-E(P') $. Let us take then a $ T $-path $ P$ in $ G$ through 
$ e_0 $ with $ E(P)\subseteq E(P')-h_0+e_0+f_0 $. Since  $ G'-E(P') $ is a subgraph of $ G-E(P) $ and $ 
\delta_{G-E(P)}(t)=\delta_{G'-E(P')}(t) $ holds for $ t\in T $, the linkability condition in $ G'-E(P') $ implies the linkability 
condition
in $ G-E(P) $. This contradicts the fact that $ G $ and $ e_0 $ form a counterexample to Lemma \ref{lem: 1path make}.
\end{proof}

\section{Outlook}\label{sec: outlook}
The minimax theorems due to Gallai and Mader (independently)  about packing edge-disjoint, vertex-disjoint and internally 
vertex-disjoint $ T $-paths in arbitrary finite graphs \cite{mader1978maximalzahl, gallai1964maximum, 
mader1978maximalzahlH} provide a promising material for structural infinite generalizations.  Our 
corresponding 
conjectures have already been discussed in the 
last section of \cite{joo2023lovcher}. 

Let us formulate the following (somewhat vague) meta-conjecture:
\begin{conjecture}\label{conj: meta}
Theorems that can be proved by augmenting paths for finite structures remain true structurally (possibly under some 
restrictions) for infinite ones.
\end{conjecture}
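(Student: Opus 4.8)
Since Conjecture~\ref{conj: meta} is deliberately imprecise, the first task of any ``proof'' is to fix a framework in which ``provable by augmenting paths'' becomes an honest hypothesis. The plan is to axiomatize an \emph{augmentation system}: a class $\mathcal{C}$ of finite structures closed under the local operations the finite proofs actually use — deletion of ground-set elements, contraction or identification, and splitting off — together with a notion of \emph{partial solution}, a dual notion of \emph{obstruction}, and an \emph{augmenting step} which, from any partial solution not yet blocked by an obstruction, produces a strictly larger one differing in only finitely many places. The finite minimax theorem is then precisely the assertion that iterating augmenting steps terminates in a matched solution--obstruction pair. The infinite target is the structural statement: every instance in the (now possibly infinite) closure of $\mathcal{C}$ carries a transfinite solution together with an obstruction \emph{orthogonal} to it. One should choose the axioms so that the three steps below run uniformly; the treatment of graphts in this paper (inner Eulerian, the linkability condition, Erdős-Menger cuts, perfect linkages) is meant to be the prototypical instance.

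The second step, reduction to the countable case, I expect to be essentially formal once $\mathcal{C}$ is axiomatized well, because it can be transcribed from Sections~\ref{sec: redu linkage}--\ref{sec: reduc countable}. One passes to an increasing, continuous chain of elementary submodels, verifies (as in Observation~\ref{obs: prev inner Euler} and the observations following it) that $\mathcal{C}$ is closed under the restrictions $G\cap M$ and $G\setminus M$, and abstracts Lemma~\ref{lem: no stat error} to the statement that the hypothesis one needs on each piece survives on a club of submodels; Shelah's singular compactness handles the case of singular cofinality. Solutions on the pieces glue to a global solution automatically; obstructions glue precisely when they are \emph{local}, meaning each ground-set element belongs to boundedly many of them — this is the counting fact (``at most two'') that closes the proof of Lemma~\ref{lem: no stat error}. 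So this step reduces the meta-conjecture to a countable meta-conjecture plus two routine verifications, closure and locality.

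The countable base case is where the augmenting-path hypothesis is genuinely consumed, and it is the step I expect to be the real obstacle — indeed the reason the statement must stay vague. The scheme is to enumerate the ``demands'' a full solution must meet (for $T$-paths: the edges at terminals that must be covered), satisfy them one at a time at finite stages, and \emph{freeze} the part of the structure already used. The difficulty, flagged in the introduction, is that a naive transfinite sequence of augmenting steps need not converge: later steps may undo earlier ones, and the union of the partial solutions need not be a solution. The remedy, in the spirit of the work of Aharoni, Nash-Williams and Shelah, is a \emph{safe-augmentation lemma}: after meeting a demand one may delete the used piece so that the residue still lies in $\mathcal{C}$ and still satisfies the solvability hypothesis. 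In the Lovász-Cherkassky case this is Lemma~\ref{lem: 1path make}, whose proof rests on the delicate Lemma~\ref{lem: deletion of two edges} (safe deletion of two edges, via splitting off and a structural replacement for the parity count of Lovász).

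I see no single argument yielding such a lemma for all augmentation systems, so the honest outcome of this programme is a recipe rather than a theorem: given a concrete augmenting-path proof, one verifies (i)~closure of $\mathcal{C}$ under the local operations, (ii)~locality of the obstructions, and (iii)~a safe-augmentation lemma; (i)--(ii) are bookkeeping, (iii) is the mathematics, and the present paper is the case study showing that (iii) is reachable for Lovász-Cherkassky. The natural further targets in which to test the recipe are Menger's theorem \cite{aharoni2009menger}, König's theorem \cite{aharoni1984konig}, and the $T$-path theorems of Gallai and Mader, the last already conjectured structurally in \cite{joo2023lovcher}.
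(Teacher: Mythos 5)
This statement is not a theorem of the paper and the paper contains no proof of it: Conjecture~\ref{conj: meta} appears in the Outlook section precisely as a deliberately vague ``meta-conjecture'' whose role is to generate specific, precise conjectures (such as Conjectures~\ref{conj: gallaimilgram} and~\ref{conj: frank tardos sebo}), each of which must then be attacked individually. Your proposal does not prove it either, and you concede as much in your final paragraph: after axiomatizing an ``augmentation system'' and transcribing the elementary-submodel reduction of Sections~\ref{sec: redu linkage}--\ref{sec: reduc countable}, the entire mathematical content is deferred to an unproven ``safe-augmentation lemma'' that you explicitly admit you cannot establish uniformly. A proof that reduces the statement to an open hypothesis in every instance is not a proof; what you have written is a research programme, and it is essentially the programme the paper itself advocates (the introduction and outlook both stress that the implementation of the Aharoni--Nash-Williams--Shelah technique ``varies from problem to problem'' and that proving specific instances ``is usually a highly non-trivial task'').

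Two further cautions on the parts you treat as routine. First, the reduction to the countable case is not formal even in the present paper: Lemma~\ref{lem: no stat error} only preserves a \emph{weakening} of the original property (the linkability condition rather than uniqueness of the Erd\H{o}s-Menger cuts), and identifying the right weakening is itself problem-specific. Second, your claim that ``solutions on the pieces glue to a global solution automatically'' already fails for some natural targets; the Dilworth example in Section~\ref{sec: outlook} shows the conjecture needs restrictions (no infinite chains) whose discovery is part of the work, not a consequence of the framework. So the honest verdict is that the statement remains a conjecture, your proposal is a reasonable restatement of the intended strategy, but no step of it constitutes progress beyond what the paper already says informally.
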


The infinite generalization of: Menger's theorem \cite{aharoni2009menger}, Ford-Fulkerson theorem \cite{aharoni2011max}, 
Tutte's 
matching theorem \cite{aharoni1988matchings} (together with the Gallai-Edmonds structure theorem \cite[Theorem 
3.1]{aharoni1990lp}), Edmonds' matroid intersection theorem 
\cite{joo2021MIC} \footnote{The 
maxflow-mincut theorem of Ford and Fulkerson and Edmonds' matroid intersection theorem are extended only to countable 
structures for now.} and the Lovász-Cherkassky theorem (Theorem \ref{thm: LCh inf}) are 
special instances of Conjecture \ref{conj: meta}
and they do not require any restrictions. Dilworth' theorem \cite{dilworth1987decomposition} states that every finite poset 
admits 
a partition $ \mathcal{P} $ 
into 
chains such that there is an antichain $ A $ with $ A \cap C\neq \emptyset $ for every $ C\in \mathcal{P} $. It is not too 
hard to construct an infinite poset that cannot be partitioned into finitely many chains (because admitting arbitrary large finite 
antichains) but does not even contain an 
infinite 
antichain. Indeed,   \[ P:=\{ (m,n)\in \mathbb{N} \times \mathbb{N}:\ n \leq m \} \]  where $ (m,n)\leq (m',n') $ iff $ m\leq m' 
$ has this property. However, Dilworth' theorem holds for posets without infinite chains 
\cite[Theorem 7.1]{aharoni1991infinite} (for countable posets, excluding 
only more 
specific substructures is already sufficient \cite{aharoni2011strongly}). Since Dilworth' theorem admits an augmenting path 
based proof,  it fits under the umbrella of 
Conjecture \ref{conj: meta} with the restriction that we forbid infinite chains.  Based 
on a theorem of Gallai and Milgram \cite{gallai1960verallgemeinerung}, we conjecture the following 
generalization:

\begin{conjecture}[infinite Gallai-Milgram]\label{conj: gallaimilgram}
Every digraph $ D $ without infinite directed paths admits a vertex-partition $ \mathcal{P} $ into directed paths and an 
independent 
set $ U $ of vertices such that $ U $ meets every $ P\in \mathcal{P} $.
\end{conjecture}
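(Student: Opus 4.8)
The plan is to imitate the three-phase strategy behind Theorem~\ref{thm: LCh inf}: isolate a single-vertex ``linkability-type'' hypothesis together with a lifting lemma under vertex-contractions, reduce to the case of a countable digraph by combinatorial set theory, and finally settle the countable case by a minimal-counterexample argument that decreases a finite parameter. The first task is to find the right reformulation. A natural target is a ``Linkage theorem'': call $v\in V(D)$ \emph{exposable} if $D$ has a vertex-partition into directed paths together with an independent transversal containing $v$, and aim to prove that if every vertex is exposable then $D$ has such a partition and transversal. One then wants a lifting lemma in the spirit of Observation~\ref{obs: lift up} and Lemma~\ref{lem: joker}: after contracting suitable (``critical'') vertex sets --- chosen so that ``$D$ has no infinite directed path'' is preserved, which rules out arbitrary contractions --- a solution of the contracted digraph should lift back to $D$. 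This step already differs in character from the edge-disjoint setting, since a Gallai--Milgram transversal is genuinely less canonical than a linkage, so pinning down the correct notion of ``critical set'' is part of the work.

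For the reduction to countable digraphs I would reuse the scaffolding of Section~\ref{sec: reduc countable} essentially verbatim: transfinite induction on $|V(D)|=\kappa$, an increasing continuous chain of elementary submodels together with Fodor's lemma (Lemma~\ref{lem: Fodor}) in the regular case --- giving an analogue of Lemma~\ref{lem: no stat error} --- and Shelah's singular compactness in the singular case. The hypothesis ``no infinite directed path'' passes to induced subdigraphs and to the admissible contractions, so each piece of the partition of $V(D)$ induced by the submodels is again admissible and of size $<\kappa$, and the union of the pieces' path partitions is a path partition of $D$. The genuinely new obstacle appears here: unlike edge-disjoint path systems, the pieces of a vertex-partition interact across the cut through arcs joining one piece to another, so the union of the pieces' transversals need not be independent in $D$. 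Overcoming this should require strengthening the inductive statement to carry a ``boundary condition'' --- a prescribed independent set on the already-exposed part that the new transversal must be non-adjacent to --- and, in analogy with the preprocessing of Lemma~\ref{lem: joker}, putting that boundary into a canonical form (via the matching/augmenting structure underlying Gallai--Milgram, cf.\ Lemma~\ref{lem: tight cut}) before the cut is made; the club produced by the analogue of Lemma~\ref{lem: no stat error} must then guarantee that the boundary condition is met at almost all stages.

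The heart of the matter, and where I expect the real difficulty to lie, is the countable case. Here I would follow the template of Lemma~\ref{lem: 1path make}: fix a countable counterexample minimizing a finite quantity associated with it --- for instance the length of a shortest directed path whose transversal vertex is responsible for a forbidden adjacency in a near-solution --- and perform a local surgery in the style of Lov\'asz' splitting-off (used in the proof of Lemma~\ref{lem: 1path make}) that strictly decreases this quantity while preserving admissibility, reaching a contradiction. As always in the infinite setting the parity arguments of the finite proof are unavailable, so one needs a purely structural ``safe modification'' lemma playing the role of Lemma~\ref{lem: deletion of two edges}, formulated in terms of augmenting structures for the matching model of Gallai--Milgram. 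The restriction ``no infinite directed path'' is exactly what should make the recursion converge --- it is the analogue of ``no infinite chain'' in the infinite Dilworth theorem \cite{aharoni1991infinite}, and it should prevent a recursively built sequence of local improvements from drifting rather than stabilising to a genuine limit partition. The principal danger, already flagged in the discussion of difficulties for Theorem~\ref{thm: LCh inf}, is precisely that infinitely many augmentations need not admit a well-defined limit; circumventing it will require, as in the present paper, ordering the surgeries so that each is local relative to a finite witness and leaves the rest of the construction intact. All of this is consistent with Conjecture~\ref{conj: meta}: Gallai--Milgram has an augmenting-path proof, so one expects the structural infinite version to hold under the restriction forbidding infinite directed paths.
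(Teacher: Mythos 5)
The statement you are addressing is labelled a \emph{conjecture} in the paper (Conjecture~\ref{conj: gallaimilgram}); the paper offers no proof of it, only the remark that its restriction to acyclic digraphs follows from the infinite version of K\H{o}nig's theorem \cite{aharoni1984konig} in the same way the infinite Dilworth theorem does. So there is nothing in the paper for your text to be compared against, and more importantly your text is not a proof either: it is a research programme sketch. Nearly every step is a stated intention or an expectation (``a natural target is\ldots'', ``one then wants a lifting lemma\ldots'', ``this should require strengthening the inductive statement\ldots'', ``where I expect the real difficulty to lie''), and the single place where you identify a concrete obstruction --- that the union of independent transversals across the submodel cut need not be independent --- you acknowledge you do not yet know how to overcome. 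No candidate for the ``exposability'' linkage lemma, the lifting lemma, the analogue of Lemma~\ref{lem: no stat error}, or the countable-case surgery is actually formulated, let alone proved.

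To be useful, a genuine attempt would need at minimum: a precise statement of the strengthened inductive hypothesis with the ``boundary condition'' (what exactly is prescribed, and what exactly must be produced), together with a proof that it is preserved under the intended elementary-submodel decomposition; a precise contraction/lifting mechanism that does not create infinite directed paths and that respects independence of the transversal across the contracted sets; and the countable base case in full. The observation that vertex-partitions interact across cuts in a way edge-disjoint systems do not is the right thing to worry about, and the analogy to ``no infinite chain'' in the Aharoni--Berger--Ziv form of infinite Dilworth is apt, but as written your submission is an outline of difficulties rather than a resolution of them; it does not settle Conjecture~\ref{conj: gallaimilgram}.
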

\noindent The restriction of Conjecture \ref{conj: gallaimilgram}
to acyclic digraphs follows from the infinite version of König's theorem \cite{aharoni1984konig} the same way the infinite 
Dilworth' theorem  does\footnote{This was observed by Aharoni 
(personal communication).} (see the proof of \cite[Theorem 7.1]{aharoni1991infinite}). 

Introducing new specific conjectures based on Conjecture \ref{conj: meta} is quite easy but proving them is usually a highly 
non-trivial task. Let us mention a second example based on \cite[Theorem 2]{frank1984covering}:

\begin{conjecture}[infinite Frank-Sebő-Tardos]\label{conj: frank tardos sebo}
Let $ G=(S,T, E) $ be a (possibly infinite) $ 2 $-edge-connected bipartite graph. Then there is a strong orientation $ D $ of $ 
G $ and a partition $ \mathcal{P} $ of $T $ such that for every $ U\in \mathcal{P} $, each weak component of $ D-U $ 
sends 
only one directed edge toward $ U $.
\end{conjecture}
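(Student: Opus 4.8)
The plan is to follow the three-stage architecture of this paper: a structural normalisation, a reduction to the countable case via elementary submodels and singular compactness, and a proof of the countable case by an ear-decomposition/augmenting-path argument. For \textbf{Stage one}, observe that the requirement ``each weak component of $D-U$ sends exactly one directed edge toward $U$'' is a local certificate of the same flavour as orthogonality to a cut, while being strongly orientable plays the role of the linkability condition. First I would isolate a ``Linkage-type'' statement in the spirit of Theorem \ref{thm: linkage}: if for \emph{each single} $t\in T$ there is a strong orientation of $G$ in which the singleton $\{t\}$ already satisfies the component condition (equivalently, the in-degree of $t$ equals the number of components of $G-t$), then there is one global strong orientation together with a partition $\mathcal{P}$ of $T$ witnessing \ref{conj: frank tardos sebo}; and separately I would check that $2$-edge-connectivity of $G$ forces this per-terminal hypothesis, via an ear decomposition rooted so that every ear incident with $t$ is oriented away from $t$. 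The other purpose of Stage one, mirroring Lemma \ref{lem: joker}, is a canonicalisation of the cut structure (contracting, for each $t$, one side of the $\trianglelefteq$-extremal Erdős-Menger $t$-cut) so that the pieces produced next inherit both $2$-edge-connectivity and the hypothesis.

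\textbf{Stage two} reduces an arbitrary $G$ to the countable case, reusing the machinery of Section \ref{sec: reduc countable} almost verbatim: take an increasing continuous chain $\langle M_\alpha:\alpha<\kappa\rangle$ of elementary submodels and the induced edge-partition of $G$ into $G\cap M_{\alpha_0}$ and the pieces $(G\setminus M_{\alpha_\xi})\cap M_{\alpha_{\xi+1}}$. The genuinely new difficulty is that, unlike systems of edge-disjoint $T$-paths, orientations of the pieces cannot simply be unioned, since strong connectivity is not an edge-local property. So one must \emph{grow} the orientation along the chain, at step $\xi$ extending the orientation already fixed on $G\cap M_{\alpha_\xi}$ to the next piece while keeping the part built so far strongly connected. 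This forces one (a) to re-prove, by a cut-counting elementarity argument in the spirit of Observation \ref{obs: prev inner Euler}, that each $G\setminus M_\alpha$ stays $2$-edge-connected, presumably only for $\alpha$ in a club, through an analogue of Lemma \ref{lem: no stat error}; and (b) to identify the correct invariant to carry along the chain --- the analogue of ``linkability, weakened so as to fail only non-stationarily'' --- something like ``strongly orientable relative to the already-chosen boundary orientation''. Pinning down that invariant so the Fodor-lemma argument still closes is one of the main obstacles.

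\textbf{Stage three}, the countable case, is where I expect the real work. A naive compactness over finite $2$-edge-connected subgraphs fails, because a limit of strong orientations of finite pieces need not be strong. Instead I would take a (transfinite) ear decomposition of the countable $2$-edge-connected graph $G$ and process the ears one at a time, maintaining a partial orientation that is strong on the subgraph built so far together with a partition of the terminals already encountered and, for each part, a ``one edge per component'' certificate; the finite Frank-Sebő-Tardos theorem \cite{frank1984covering} (itself an augmenting-path result, hence a legitimate instance of Conjecture \ref{conj: meta}) is used to re-orient each finite ear extension and repair the partition. The crux --- mirroring the role of Lemma \ref{lem: 1path make} and the ``splitting off'' potential in Section \ref{sec: countable case} --- is the bookkeeping ensuring that a terminal's part and its component certificate are never irreparably destroyed by a later ear, so that the limit simultaneously yields a strong orientation \emph{and} a coherent partition of the full, possibly infinite, set $T$.

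Where I expect to get stuck: both strong connectivity and the component condition are non-monotone under adding edges or ears, and neither is a finite object, so there is no ready-made limit argument; producing a single object that is strongly connected while carrying a coherent partition of an infinite $T$ will likely require a bespoke ``safe modification'' lemma in the manner of Lemma \ref{lem: deletion of two edges}, and even the inductive step inside Stage three --- strongly orienting a countable $2$-edge-connected graph so as to respect a prescribed finite substructure --- may demand its own argument.
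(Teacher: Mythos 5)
The first thing to be clear about is that the statement you are trying to prove is not a theorem of the paper: Conjecture \ref{conj: frank tardos sebo} appears in the Outlook section precisely as an \emph{open} conjecture, offered as a further instance of the meta-conjecture \ref{conj: meta}, and the paper gives no proof of it. So there is nothing to compare your argument against --- and, more importantly, what you have written is not a proof but a research programme. Every one of your three stages rests on a lemma whose statement you have not pinned down and whose proof you have not given, and you say so yourself (``Pinning down that invariant so the Fodor-lemma argument still closes is one of the main obstacles''; ``this is where I expect the real work''). A plan that ends with ``Where I expect to get stuck'' cannot be graded as a proof of the statement.

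The gaps you identify are also genuinely the hard ones, and none of them is closed by transplanting the paper's machinery. The whole architecture of Sections \ref{sec: reduc countable} and \ref{sec: countable case} depends on the fact that a perfect linkage is an edge-local object: perfect linkages of the pieces of an edge-partition simply union to a perfect linkage of the whole grapht. A strong orientation together with a partition of $T$ has no such property --- strong connectivity of the union is not implied by strong connectivity of the pieces, and the partitions of the terminal sets of the pieces need not cohere into a single partition of $T$ satisfying the one-edge-per-component condition globally (a weak component of $D-U$ in the full graph is in general much larger than in any piece, so a certificate valid in a piece can be destroyed in the union). Your Stage one ``Linkage-type'' reduction is likewise only asserted: you do not show that $2$-edge-connectivity yields the per-terminal hypothesis, nor that the per-terminal hypotheses can be combined. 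Until you can formulate and prove (i) a composable invariant that survives the club/Fodor argument of Lemma \ref{lem: no stat error} and (ii) a countable-case lemma playing the role of Lemma \ref{lem: 1path make} for orientations, this remains an outline of the difficulty rather than a solution to it.
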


\printbibliography
\end{document}